\newtheorem{lem}{Lemma}
\newtheorem{thm}{Theorem}
\newtheorem{conj}{Conjecture}
\newtheorem{cor}{Corollary}
\newtheorem{rmk}{Remark}
\let\oldrmk\rmk
\renewcommand{\rmk}{\oldrmk\normalfont}
\newcommand{\eref}[1]{(\ref{#1})}
\renewcommand{\b}[1]{\bar{#1}}
\renewcommand{\leq}{\leqslant}
\renewcommand{\geq}{\geqslant}
\definecolor{pink}{RGB}{255,128,128}
\definecolor{lred}{RGB}{255,0,0}
\definecolor{dred}{RGB}{176,0,0}
\definecolor{burg}{RGB}{43,0,0}
\def\ie{{\it i.e.}\ }
\def\ihat{I}
\def\fs{\footnotesize}
\def\u{\scalebox{0.65}{$\blacktriangle$}}
\def\d{\scalebox{0.65}{$\blacktriangledown$}}
\def\l{\scalebox{0.65}{$\blacktriangleleft$}}
\def\r{\scalebox{0.65}{$\blacktriangleright$}}
\newcommand{\binomQ}{\genfrac[]{0pt}{}}
\newcommand{\pf}{\mathop{\rm Pf}}
\begin{document}

\title{Refined Cauchy/Littlewood identities and six-vertex model partition functions: II. Proofs and new conjectures}

\author{D.~Betea, M.~Wheeler and P.~Zinn-Justin}

\address{Laboratoire de Physique Th\'eorique et Hautes \'Energies, CNRS UMR 7589 and Universit\'e Pierre et Marie Curie (Paris 6), 4 place Jussieu, 75252 Paris cedex 05, France}
\email{betea,mwheeler,pzinn@lpthe.jussieu.fr}

\keywords{Cauchy and Littlewood identities, symmetric functions, alternating sign matrices, six-vertex model}

\begin{abstract}
We prove two identities of Hall--Littlewood polynomials, which appeared recently in \cite{bw}. We also conjecture, and in some cases prove, new identities which relate infinite sums of symmetric polynomials and partition functions associated with symmetry classes of alternating sign matrices. These identities generalize those already found in \cite{bw}, via the introduction of additional parameters. The left hand side of each of our identities is a simple refinement of a relevant Cauchy or Littlewood identity. The right hand side of each identity is (one of the two factors present in) the partition function of the six-vertex model on a relevant domain. 
\end{abstract}

\maketitle

\section{Introduction}

In this paper we continue our study of Cauchy and Littlewood type identities, initiated in \cite{bw}, and their relationship with partition functions of the six-vertex model. The principal results studied in \cite{bw} were three infinite sum identities for Hall--Littlewood polynomials:
\begin{multline}
\label{HL-ASM}
\sum_{\lambda}
\prod_{i=0}^{\infty}
\prod_{j=1}^{m_i(\lambda)}
(1-t^j)
P_{\lambda}(x_1,\dots,x_n;t)
P_{\lambda}(y_1,\dots,y_n;t)
=
\\
\frac{\prod_{i,j=1}^{n} (1-t x_i y_j)}
{\prod_{1 \leq i<j \leq n} (x_i - x_j) (y_i - y_j)}
\det_{1\leq i,j \leq n}
\left[
\frac{(1-t)}{(1-x_i y_j)(1-t x_i y_j)}
\right]
\end{multline}
\begin{multline}
\label{HL-OSASM}
\sum_{\substack{\lambda\ \text{with} \\ \text{even columns}}} \ \ 
\prod_{i=0}^{\infty}\ \prod_{j=2,4,6,\dots}^{m_i(\lambda)} 
(1-t^{j-1}) 
P_{\lambda}(x_1,\dots,x_{2n};t)
=
\\
\prod_{1 \leq i<j \leq 2n} 
\frac{(1-t x_i x_j)}{(x_i-x_j)} 
\pf_{1\leq i < j \leq 2n} 
\left[ 
\frac{(x_i-x_j)(1-t)}{(1-x_i x_j) (1-t x_i x_j)} 
\right]
\end{multline}
\begin{multline}
\label{HL-UASM}
\sum_{\lambda} 
\prod_{i=0}^{\infty} 
\prod_{j=1}^{m_i(\lambda)} 
(1-t^j)
P_{\lambda}(x_1,\dots,x_n;t) 
K_{\lambda}(y_1,\dots,y_n;t) 
= 
\\
\frac{\prod_{i,j=1}^{n} (1-t x_i y_j) (1-\frac{t x_i}{y_j}) }
{\prod_{1\leq i<j \leq n} (x_i-x_j) (y_i-y_j) (1 - t x_i x_j) (1 - \frac{1}{y_i y_j}) }
\det_{1\leq i,j \leq n}
\left[ 
\frac{(1-t)}{(1- x_i y_j)(1-t x_i y_j)(1- \frac{x_i}{y_j})(1-\frac{t x_i}{y_j})} 
\right]
\end{multline}
where $P_{\lambda}$ and $K_{\lambda}$ denote Hall--Littlewood polynomials of type $A_n$ \cite{mac} and $BC_n$ \cite{ven}, respectively. In \eref{HL-ASM} and \eref{HL-UASM} the sum is taken over all partitions of maximal length $n$, while in \eref{HL-OSASM} the sum is over all partitions of maximal length $2n$ and whose 
Young diagrams have even-length columns. In all equations $m_i(\lambda)$ is multiplicity of the part $i$ in $\lambda$, assuming all partitions are suffixed by $m_0(\lambda)$ zeros to augment them to their maximal length.

Equation \eref{HL-ASM} is due to Warnaar \cite{war}, based on earlier results of Kirillov and Noumi \cite{kn}. In \cite{bw}, we exposed a combinatorial interpretation of \eref{HL-ASM}: the left hand side can be viewed as a generating series of {\it path-weighted} plane partitions \cite{vul}, while the right hand side is the partition function of the six-vertex model under domain wall boundary conditions \cite{kor,ize}, and thus a generating series of alternating sign matrices (ASMs) \cite{kup1,kup2}. 

Equations \eref{HL-OSASM} and \eref{HL-UASM}, both conjectured in \cite{bw}, are further examples of such a relationship. In both of these equations, the right hand side is the partition function of the six-vertex model on a certain lattice (in \eref{HL-OSASM} the underlying lattice has off-diagonal symmetry \cite{kup2}; the partition function in \eref{HL-UASM} arises from reflecting domain wall boundary conditions \cite{tsu}), and may be viewed as a multi-parameter generating series of a symmetry class of ASMs (off-diagonally symmetric ASMs in the case of \eref{HL-OSASM}; U-turn ASMs in the case of \eref{HL-UASM}). Although we are able to view the left hand side of \eref{HL-OSASM} as a generating series of path-weighted symmetric plane partitions \cite{bw}, for the moment there is no known combinatorial interpretation of the left hand side of \eref{HL-UASM} in terms of plane partitions or other tableaux-related objects.

The goals of the present work are as follows. Firstly, we provide a new proof of \eref{HL-ASM} by applying the 
{\it Izergin--Korepin technique}\footnote{We use this as an umbrella term for any proof that involves: {\bf 1}. Writing down a set of properties, one of which is a simple recursion relation, that uniquely determine an object, and {\bf 2.} Showing that a certain determinant or Pfaffian Ansatz satisfies these properties.} \cite{kor,ize} to the left hand side of the equation, before adapting this method to prove \eref{HL-OSASM}. We remain unable to prove \eref{HL-UASM} by such methods, due to the absence of a simple combinatorial (tableau) formula for the $BC_n$-symmetric Hall--Littlewood polynomials. 

Secondly, we shall generalize all three identities by the introduction of additional parameters. It was already demonstrated in \cite{war} that \eref{HL-ASM} may be refined by two extra parameters $q$ and $u$, with the original identity being recovered when $q=0$ and $u=t$. The introduction of the $q$ parameter elevates the participating symmetric functions to Macdonald polynomials, and the equation itself comes from acting on the Cauchy identity with a generating series of Macdonald difference operators \cite{mac} (where $u$ is the indeterminate of the generating series). We prove that even in the presence of the two extra parameters, the right hand side of the identity remains a determinant (a fact which was not explicit in either \cite{war} or \cite{kn}). In a similar vein, we find that it is possible to refine \eref{HL-OSASM} by the introduction of the parameters $q$ and $u$. To round off, we conjecture a deformed version of \eref{HL-UASM} involving $u$ and four parameters $t_0,t_1,t_2,t_3$ which elevates it to the level of {\it lifted Koornwinder polynomials} \cite{rai}.

Finally, we investigate the meaning of the deformation parameters thus introduced in the setting of the 
six-vertex model. Surprisingly, the inclusion of the indeterminate $u$ in our equations does not break the correspondence with partition functions of the six-vertex model: the $u$-deformed versions of 
\eref{HL-ASM}--\eref{HL-UASM} all lead to determinants/Pfaffians which have appeared in \cite{kup2} in the context of further symmetry classes of ASMs. We will not comment on the role of $q$ in this scheme, since it appears to play only a trivial role.\footnote{On the right hand side of the $q$-deformed version of \eref{HL-ASM} and \eref{HL-OSASM}, the dependence on $q$ is completely factorized. In particular, $q$ does not appear within the determinant/Pfaffian, which rules out the possibility that it plays any non-trivial role within the six-vertex model itself.}

The paper is organized as follows. In Section \ref{sec:proofs} we give proofs of two results: identity \eqref{HL-ASM} (using an independent method to that of \cite{war}) and \eqref{HL-OSASM} (conjectured in \cite{bw}). In Section \ref{sec:mac} we discuss the generalization of \eqref{HL-ASM} to Macdonald polynomials (obtained in \cite{war}), and conjecture a companion generalization of \eqref{HL-OSASM} to this level. $u$-deformations of the Cauchy, Littlewood and $BC$-type Cauchy identities are discussed in Sections \ref{sec:HTASM}, \ref{sec:OOASM} and \ref{sec:UUASM}, and their connection with partition functions of ASM symmetry classes is exposed. The main result in Section \ref{sec:HTASM} is that a $u$-deformed version of \eqref{HL-ASM} is closely related to the partition function of half-turn symmetric ASMs (for a particular value of $u$). The main result in Section \ref{sec:OOASM} is Theorem \ref{u-def-little-hl}, a $u$-generalization of equation \eqref{HL-OSASM}. In this case, for an appropriate value of $u$, we obtain a close connection with the partition function of off-diagonally/off-anti-diagonally symmetric ASMs. In Section \ref{sec:UUASM} we conjecture a $u$-generalization of \eqref{HL-UASM} (Conjecture \ref{u-def-cau-bc-hl}). We prove a simpler, companion identity involving symplectic Schur polynomials (Theorem \ref{symp-schur-thm}) but are unable to prove the conjecture (due to the lack of a suitable branching rule for the lifted Koornwinder polynomials which participate). The conjecture has been verified for small partitions using Mathematica and Sage. Once again, a certain value of $u$ leads to a correspondence with a six-vertex model partition function (in this case, the partition function of double U-turn ASMs). Finally, following Rains \cite{rai}, in the Appendix we present a few results on $BC$-type interpolation and Koornwinder polynomials (and their symmetric function analogues) that we use. 

Throughout the paper, $\b{x} := \frac{1}{x}$. An $n$-tuple of variables $(x_1,\dots,x_n)$ will sometimes be denoted $\vec{x}_n$. We reserve letters $\lambda, \mu, \dots$ for partitions. A partition $\lambda$ is either the empty partition ($0$) or a sequence of strictly positive numbers listed in decreasing order: $\lambda_1 \geq \lambda_2 \geq \cdots \geq \lambda_k > 0$. We call each $\lambda_i$ a \textit{part} and $\ell(\lambda):=k$ the \textit{length} (number of non-zero parts) of $\lambda$. If all parts of $\lambda$ are even, we call the partition \textit{even}. $m_i(\lambda)$ stands for the number of parts in $\lambda$ equal to $i$. If for some prespecified $n$ we have $\ell(\lambda) \leq n$, we abuse notation and define $m_0(\lambda) = n-\ell(\lambda)$ to be the number of zeros we need to append to $\lambda$ to get a vector of length $n$. Moreover, we call $|\lambda|:=\sum_{i=1}^{\ell(\lambda)} \lambda_i$ the \textit{weight} of the partition. For any $\lambda$ we have a \textit{conjugate partition} $\lambda'$ whose parts are defined as $\lambda'_i := |\{j: \lambda_j \geq i\}|$. We finally define the notion of interlacing partitions. Let $\lambda$ and $\mu$ be two partitions with $|\lambda| \geq |\mu|$. They are said to be \textit{interlacing} and we write $\lambda \succ \mu$ if and only if
\begin{align*} 
\lambda_1 \geq \mu_1 \geq \lambda_2 \geq \mu_2 \geq \lambda_3 \geq \cdots
\end{align*}
In the language of \cite{mac}, the interlacing property is equivalent to saying that the skew diagram $\lambda-\mu$ forms a {\it horizontal strip,} meaning that 
$\lambda'_i-\mu'_i \leq 1$ for all $i \geq 1$.

\section{Proofs}
\label{sec:proofs}

The primary goal of this section is to prove equation \eref{HL-OSASM}, effectively by using the Izergin--Korepin technique familiar from quantum integrable models. As a warm-up, we begin by providing a new proof of \eref{HL-ASM} along such lines. This approach to proving \eref{HL-ASM} and \eref{HL-OSASM} may not be the most elegant (indeed, in the case of \eref{HL-ASM} there is a much simpler proof using Macdonald difference operators -- see Section \ref{sec:mac}), but it is powerful since it only assumes two standard properties of Hall--Littlewood polynomials: their branching rule, and a Pieri identity.

\subsection{Branching rule for Hall--Littlewood polynomials}

The branching rule allows a Hall--Littlewood polynomial 
$P_{\lambda}(x_1,\dots,x_n;t)$ to be written as a sum over Hall--Littlewood polynomials $P_{\mu}(x_1,\dots,x_{n-1};t)$ in a smaller alphabet. From the definition of skew Hall--Littlewood polynomials (see Section 5, Chapter III of \cite{mac}), one has 
\begin{align*}
P_{\lambda}(x_1,\dots,x_n;t)
=
\sum_{\mu}
P_{\lambda/\mu}(x_n;t)
P_{\mu}(x_1,\dots,x_{n-1};t).
\end{align*}
Since the skew Hall--Littlewood polynomial $P_{\lambda/\mu}(x_n;t)$ in a single variable satisfies
\begin{align*}
P_{\lambda/\mu}(x_n;t)
=
\psi_{\lambda/\mu}(t)
x_n^{|\lambda-\mu|},
\end{align*}
the branching rule can be expressed as
\begin{align}
\label{branch}
P_{\lambda}(x_1,\dots,x_n;t)
=
\sum_{\mu}
\psi_{\lambda/\mu}(t)
x_n^{|\lambda-\mu|}
P_{\mu}(x_1,\dots,x_{n-1};t),
\end{align}
where the function $\psi_{\lambda / \mu}(t)$ is given by\footnote{We have departed slightly from the convention of \cite{mac} for the function $\psi_{\lambda/\mu}(t)$, by incorporating the Kronecker delta into its definition (so that it is defined for all partitions $\lambda$, $\mu$). This turns out to be convenient in many of the equations that follow, since it avoids keeping track of interlacing conditions when writing sums.}
\begin{align*}
\psi_{\lambda/\mu}(t)
=
\delta_{\lambda \succ \mu}
\left(
\prod_{\substack{i \geq 1 \\ m_i(\mu) = m_i(\lambda)+1}}
(1-t^{m_i(\mu)})
\right)
\end{align*}
with $\psi_{\lambda/\mu}(t) = 0$ unless $\lambda \succ \mu$. In the sequel we will often find it convenient to rephrase \eref{branch} in terms of horizontal strips, by writing
\begin{align}
\label{branch2}
P_{\lambda}(x_1,\dots,x_n;t)
=
\sum_{\mu:\mu' = \lambda' - \epsilon} 
\prod_{\substack{i \geq 1 \\ \epsilon_i=0 \\ \epsilon_{i+1}=1}}
(1-t^{m_i(\mu)})
x_n^{|\lambda-\mu|}
P_{\mu}(x_1,\dots,x_{n-1};t),
\end{align} 
where the notation $\mu' = \lambda' - \epsilon$ means that $\mu'_i  = \lambda'_i - \epsilon_i$ for all $i \geq 1$, for some $\epsilon_i \in \{0,1\}$. 

\subsection{A Pieri identity for Hall--Littlewood polynomials}

Pieri rules allow one to compute the product of a fundamental symmetric function (such as complete symmetric functions, or elementary symmetric functions) and a more advanced symmetric function (such as Schur, Hall--Littlewood, or Macdonald polynomials). Several types of Pieri rules exist for Hall--Littlewood polynomials, but in the section we will only make use of the identity
\begin{align}
\label{pieri}
e_k(x_1,\dots,x_n)
P_{\mu}(x_1,\dots,x_n;t)
=
\sum_{\lambda:|\lambda - \mu| = k}
\psi'_{\lambda/\mu}(t)
P_{\lambda}(x_1,\dots,x_n;t),
\end{align}
where $e_k(x_1,\dots,x_n)$ is an elementary symmetric function (see Section 2, Chapter I of \cite{mac}) and $\psi'_{\lambda/\mu}(t)$ is given by
\begin{align*}
\psi'_{\lambda/\mu}(t)
=
\prod_{i = 1}^{\infty}
\binomQ{\lambda'_i - \lambda'_{i+1}}{\lambda'_i - \mu'_i}_t
=
\prod_{i = 1}^{\infty}
\binomQ{m_i(\lambda)}{\lambda'_i - \mu'_i}_t
\end{align*}
with the $t$-binomial coefficient being defined as
\begin{align*}
\binomQ{a}{b}_t
=
\frac{(1-t) \dots (1-t^a)}{(1-t) \dots (1-t^b).(1-t) \dots (1-t^{a-b})},
\quad
\forall\ 0 \leq b \leq a,
\quad
a,b \in \mathbb{Z},
\end{align*}
and $\binomQ{a}{b}_t = 0$ otherwise. We remark that the sum in \eref{pieri} can be considered to be taken over $\lambda$ such that $\lambda - \mu$ is a vertical strip (or equivalently, such that $\lambda' \succ \mu'$), since the coefficients $\psi'_{\lambda/\mu}(t)$ vanish when this is not the case.

\subsection{Proof of equation \eref{HL-ASM}}
\label{proof-ASM}

In this subsection we prove Theorem \ref{cauch-thm}, which is an alternative statement of equation \eref{HL-ASM}. Our strategy is to show that the left hand side of 
\eref{HL-ASM} satisfies four conditions, which are obvious properties of the right hand side (they are the usual four properties in the Izergin--Korepin approach to calculating the domain wall partition function). Since these conditions are uniquely-determining, it follows that the two sides of \eref{HL-ASM} must be equal.

\begin{thm}
\label{cauch-thm}
Define the function
\begin{align}
\label{F_n}
\mathcal{F}_n(x_1,\dots,x_n)
=
\sum_{\lambda}
w_{\lambda}(n,t)
P_{\lambda}(x_1,\dots,x_n;t)
P_{\lambda}(y_1,\dots,y_n;t),
\end{align}
where for convenience we have set
\begin{align}
w_{\lambda}(n,t)
=
\prod_{i=0}^{\infty}
\prod_{j=1}^{m_i(\lambda)}
(1-t^j),
\end{align}
and where the dependence on the variables $\{y_1,\dots,y_n\}$ and $t$ has been intentionally suppressed. $\mathcal{F}_n(x_1,\dots,x_n)$ satisfies the following properties:
\begin{enumerate}[label=\bf\arabic*.]

\item $\mathcal{F}_n(x_1,\dots,x_n)$ is symmetric in $\{x_1,\dots,x_n\}$.

\item The renormalized function 
$\prod_{i,j=1}^{n} (1-x_i y_j) \mathcal{F}_n(x_1,\dots,x_n)$ is a polynomial in $x_n$ of degree $n-1$.

\item Setting $x_n = 1/(t y_n)$, one obtains the recursion
\begin{align*}
\mathcal{F}_n
\Big|_{x_n = 1/(t y_n)}
=
-t^n
\mathcal{F}_{n-1}(x_1,\dots,x_{n-1}).
\end{align*}

\item $\mathcal{F}_1(x_1) = (1-t)/(1-x_1 y_1)$.

\end{enumerate}
\end{thm}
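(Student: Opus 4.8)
The plan is to verify the four properties in the order \textbf{1}, \textbf{4}, \textbf{2}, \textbf{3}, since property \textbf{3} will lean on the rational structure established in \textbf{2}. Properties \textbf{1} and \textbf{4} are immediate. For \textbf{1}, each summand is $w_{\lambda}(n,t)\,P_{\lambda}(y_1,\dots,y_n;t)$ times $P_{\lambda}(x_1,\dots,x_n;t)$, and the latter is by construction symmetric in $x_1,\dots,x_n$ while the former is independent of the $x_i$, so the whole sum is symmetric. For \textbf{4}, I would use that in a single variable $P_{(k)}(x_1;t)=x_1^k$ and that $w_{(k)}(1,t)=1-t$ for every $k\geq 0$ (the only nonzero multiplicity being $m_0$ or $m_k$, each contributing a single factor $1-t$); the sum over $\lambda=(k)$, $k\geq 0$, is then the geometric series $(1-t)\sum_{k\geq 0}(x_1 y_1)^k=(1-t)/(1-x_1 y_1)$.

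For property \textbf{2} I would isolate the dependence on $x_n$ through the branching rule \eref{branch}, writing $\mathcal{F}_n=\sum_{\mu}P_{\mu}(x_1,\dots,x_{n-1};t)\,C_{\mu}(x_n)$ with $C_{\mu}(x_n)=\sum_{\lambda\succ\mu}\psi_{\lambda/\mu}(t)\,w_{\lambda}(n,t)\,x_n^{|\lambda-\mu|}P_{\lambda}(y_1,\dots,y_n;t)$. In the region where the series converges the geometric growth of $x_n^{|\lambda-\mu|}$ against the bounded weights forces the only possible singularities to be at $x_n=1/y_j$, $1\leq j\leq n$. The substantive claim is then that $\prod_{j=1}^{n}(1-x_n y_j)\,\mathcal{F}_n$ has no poles and degree at most $n-1$ in $x_n$; equivalently, that the coefficient of $x_n^m$ vanishes for every $m\geq n$. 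Expanding $\prod_{j=1}^{n}(1-x_n y_j)=\sum_{k=0}^{n}(-1)^k e_k(y_1,\dots,y_n)\,x_n^k$ and applying the Pieri rule \eref{pieri} to each product $e_k(\vec{y}_n)P_{\lambda}(\vec{y}_n)=\sum_{\kappa}\psi'_{\kappa/\lambda}(t)P_{\kappa}(\vec{y}_n)$, the coefficient of a fixed $P_{\mu}(\vec{x}_{n-1})P_{\kappa}(\vec{y}_n)$ becomes a finite signed sum over intermediate partitions $\lambda$ (with $\lambda/\mu$ a horizontal strip and $\kappa/\lambda$ a vertical $k$-strip, $|\lambda-\mu|+k=m$) of $(-1)^k\psi'_{\kappa/\lambda}(t)\,\psi_{\lambda/\mu}(t)\,w_{\lambda}(n,t)$. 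The heart of the matter is to show this alternating sum vanishes once $m\geq n$.

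For property \textbf{3} I would not substitute $x_n=1/(t y_n)$ directly into the defining series: at that point the series lies outside its region of convergence (already for $\mu=\nu=\varnothing$ the naive substitution produces a divergent $\sum_k t^{-k}$). Instead I would use property \textbf{2}, so that $\prod_{i,j=1}^{n}(1-x_i y_j)\,\mathcal{F}_n$ is a genuine polynomial in $x_n$ and evaluation at $x_n=1/(t y_n)$ is legitimate by analytic continuation. At this value the factor $1-t x_n y_n$ vanishes, which in a resummation of $C_{\mu}$ via the Hall--Littlewood Cauchy identity \cite{mac} kills all contributions except those produced by reducing the alphabet; tracking the surviving terms --- in particular the length-dependent factor $(t;t)_{n-\ell(\lambda)}$ hidden inside $w_{\lambda}(n,t)$, which distinguishes partitions of full length $n$ from shorter ones --- should collapse the sum onto its diagonal and reproduce $-t^n\,\mathcal{F}_{n-1}(x_1,\dots,x_{n-1})$, with the $y_n$-dependence cancelling as it must, since $\mathcal{F}_{n-1}$ involves only $y_1,\dots,y_{n-1}$.

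The main obstacle is the vanishing identity underlying property \textbf{2}. The weight $w_{\lambda}(n,t)$ is not the usual normalisation $b_{\lambda}(t)=\prod_{i\geq 1}(t;t)_{m_i(\lambda)}$ but carries the extra length-dependent factor $(t;t)_{n-\ell(\lambda)}$, so the clean Cauchy identity does not apply verbatim; it is precisely this factor that deforms the product $\prod_{i,j}(1-t x_i y_j)/(1-x_i y_j)$ into the determinant on the right of \eref{HL-ASM}. Proving that the alternating sum vanishes for $m\geq n$ (and, for \textbf{3}, extracting the surviving diagonal) is therefore where the real work lies; I would attempt it either by an induction on $n$ that peels off one further variable, or by a sign-reversing involution on the intermediate partitions $\lambda$ weighted by $\psi'_{\kappa/\lambda}\psi_{\lambda/\mu}w_{\lambda}$.
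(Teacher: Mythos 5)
Your handling of properties \textbf{1} and \textbf{4} is fine, and your reduction of property \textbf{2} --- branching rule \eref{branch}, expansion of $\prod_{j}(1-x_n y_j)$ into elementary symmetric functions, Pieri rule \eref{pieri}, then extraction of coefficients by linear independence of Hall--Littlewood polynomials --- is exactly the paper's first move. But you stop precisely where the actual proof begins: the vanishing of the resulting alternating sum, which you defer to ``an induction\dots or a sign-reversing involution,'' is the paper's equation \eref{1-final-sum} (a degenerate case of the $q$-Pfaff--Saalsch\"utz--Rains summation, cf.\ \eref{q-pfaff}), and the paper never proves it by involution or fresh induction. Instead it is \emph{deduced from the Cauchy identity itself}: applying the identical branching/Pieri rearrangement to the left side of \eref{hl-cauchy}, whose right side $\prod_{i,j}(1-t x_i y_j)$ is manifestly a polynomial of degree $n$ in $x_m$, forces all higher coefficients to vanish, and that vanishing \emph{is} \eref{1-final-sum}. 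The length-dependent factor $(t;t)_{n-\ell(\lambda)}$ that you correctly flag as the obstruction is then handled by two further steps absent from your proposal: the rewriting \eref{alt-form-cauchy} via $P_{\lambda}=(x_1\cdots x_n)P_{\lambda^{*}}$, which trades $w_{\lambda}(n,t)$ for the standard $b_{\lambda}(t)$ at the cost of the constraint $\ell(\lambda)=n$; and the subtraction argument $\mathcal{S}_{=n}=\mathcal{S}_{\leq n}-\mathcal{S}_{\leq n-1}$, where $\mathcal{S}_{\leq n-1}$ is killed by splitting off a part $\nu_n=1$ and reducing back to \eref{1-final-sum}. Without these, your property \textbf{2} is an unproven claim.

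The gap in property \textbf{3} is larger. Your argument is one sentence of expectation --- the specialization ``should collapse the sum onto its diagonal'' --- whereas in the paper this collapse is the longest computation in the proof: after branching \emph{both} alphabets in \eref{alt-form-cauchy}, one must show that the coefficient $\mathcal{C}(\mu,\nu)$ of $P_{\mu}(\vec{x}_{n-1})P_{\nu}(\vec{y}_{n-1})\,y_n^{|\mu-\nu|}$ equals $-t^{n-1}b_{\mu}(t)$ when $\mu=\nu$ and $0$ otherwise, which is done by introducing the partial coefficients \eref{part-coeff1}, deriving the recurrences \eref{recur-1}--\eref{recur-2}, and solving \eref{recur-final} for the combination $\mathcal{C}_{i,0}-t\,\mathcal{C}_{i,1}$ (the off-diagonal case dies only because its initial condition for that recurrence is zero). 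Your proposed shortcut, ``resummation of $C_{\mu}$ via the Hall--Littlewood Cauchy identity,'' cannot produce this: at $x_n=1/(t y_n)$ the undeformed Cauchy kernel $\prod_{i,j}(1-tx_iy_j)/(1-x_iy_j)$ vanishes identically, so the entire non-zero answer $-t^n\mathcal{F}_{n-1}$ comes from the deformation $w_{\lambda}(n,t)$ (equivalently, the constraint $\ell(\lambda)=n$), and extracting it is irreducibly the coefficient analysis above. In short, you have reproduced the correct skeleton of the paper's Izergin--Korepin strategy, but both load-bearing steps --- the bootstrap of \eref{1-final-sum} from \eref{hl-cauchy} and the recursive evaluation of $\mathcal{C}(\mu,\nu)$ --- are missing.
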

Since $\mathcal{F}_n(x_1,\dots,x_n)$ is a sum of Hall--Littlewood polynomials, each being symmetric in $\{x_1,\dots,x_n\}$, property {\bf 1} is immediate. The remaining properties {\bf 2}--{\bf 4} will be proved in Sections \ref{1-poly}--\ref{1-ic}, after we make a preliminary observation about the function $\mathcal{F}_n(x_1,\dots,x_n)$ in Section \ref{1-alt}.

\subsubsection{Alternative form for $\mathcal{F}_n(x_1,\dots,x_n)$}
\label{1-alt}

Let $\lambda$ be a length $n$ partition, and denote by $\lambda^{*}$ the partition obtained by removing the entire first column from the Young diagram of $\lambda$, \ie $\lambda^{*} = (\lambda_1-1,\dots,\lambda_n-1)$. Then one has the following identity between Hall--Littlewood polynomials:
\begin{align}
\label{imptt-id}
P_{\lambda}(x_1,\dots,x_n;t)
=
(x_1 \cdots x_n)
P_{\lambda^{*}}(x_1,\dots,x_n;t).
\end{align}
Since the function $w_{\lambda}(n,t)$ is effectively the same as the function 
$b_{\lambda}(t) := \prod_{i=1}^{\infty} \prod_{j=1}^{m_i(\lambda)} (1-t^j)$ 
(except that it treats parts of size zero as though they were of non-zero size), by using \eref{imptt-id} it follows immediately that
\begin{align}
\label{alt-form-cauchy}
(x_1 \cdots x_n) (y_1 \cdots y_n)
\mathcal{F}_n(x_1,\dots,x_n)
=
\sum_{\lambda: \ell(\lambda) = n}
b_{\lambda}(t)
P_{\lambda}(x_1,\dots,x_n;t)
P_{\lambda}(y_1,\dots,y_n;t).
\end{align}
This alternative way of writing $\mathcal{F}_n(x_1,\dots,x_n)$ will prove to be helpful in establishing the polynomiality property {\bf 2} and the recursive property {\bf 3}.

\subsubsection{Polynomiality}
\label{1-poly}

We begin by considering the Cauchy identity for Hall--Littlewood polynomials (see Section 4, Chapter III of \cite{mac}). While it is standard to write this identity so that the right hand side is a rational function in $\{x_1,\dots,x_m\}$ and $\{y_1,\dots,y_n\}$, here we adopt a polynomial normalization:
\begin{align}
\label{hl-cauchy}
\prod_{i=1}^{m}
\prod_{j=1}^{n}
(1-x_i y_j)
\sum_{\lambda}
b_{\lambda}(t)
P_{\lambda}(x_1,\dots,x_m;t)
P_{\lambda}(y_1,\dots,y_n;t)
=
\prod_{i=1}^{m}
\prod_{j=1}^{n}
(1-t x_i y_j).
\end{align}
Thanks to this identity, we deduce that the sum on the left hand side is in fact a polynomial in $x_m$ of degree $n$. To make full use of this fact, we now seek to rearrange the left hand side so that the dependence on $x_m$ is fully extracted. By applying both the branching rule \eref{branch} and the Pieri identity \eref{pieri}, we find that
\begin{align*}
&
\prod_{i=1}^{m}
\prod_{j=1}^{n}
(1-x_i y_j)
\sum_{\lambda}
b_{\lambda}(t)
P_{\lambda}(\vec{x}_m;t)
P_{\lambda}(\vec{y}_n;t)
\\
&
=
\prod_{i=1}^{m-1}
\prod_{j=1}^{n}
(1-x_i y_j)
\sum_{k=0}^{n}
e_{k}(\vec{y}_n)
(-x_m)^k
\sum_{\lambda}
\ 
\sum_{\mu}
b_{\lambda}(t)
\psi_{\lambda/\mu}(t) x_m^{|\lambda-\mu|}
P_{\mu}(\vec{x}_{m-1};t)
P_{\lambda}(\vec{y}_n;t)
\\
&
=
\prod_{i=1}^{m-1}
\prod_{j=1}^{n}
(1-x_i y_j)
\sum_{\lambda}
\ 
\sum_{\mu}
\ 
\sum_{\nu}
b_{\lambda}(t)
\psi_{\lambda/\mu}(t)
\psi'_{\nu/\lambda}(t)
(-1)^{|\nu-\lambda|} 
x_m^{|\nu-\mu|}
P_{\mu}(\vec{x}_{m-1};t)
P_{\nu}(\vec{y}_n;t), 
\end{align*}
where we have used the generating series $\prod_{i=1}^{n} (1+y_i z) = \sum_{k=0}^{n} e_k(y_1,\dots,y_n) z^k$ for the elementary symmetric polynomials to produce the first equality. From the linear independence of the Hall--Littlewood polynomials, the fact that the previous expression is polynomial in $x_m$ of degree $n$ means that:
\newline

\noindent
\fbox{
\begin{minipage}{6.3in}
\begin{equation}
\label{1-final-sum}
\sum_{\lambda}
(-1)^{|\lambda|}
b_{\lambda}(t)
\psi_{\lambda/\mu}(t)
\psi'_{\nu/\lambda}(t)
=
0
\end{equation}
for all partitions $\mu$ of length $\ell(\mu) \leq m-1$ and $\nu$ of length 
$\ell(\nu) \leq n$, such that $|\nu-\mu| > n$.
\end{minipage}
}

\medskip
Of course the value of $m$ is arbitrary, so one can state 
\eref{1-final-sum} with no constraint imposed on $\mu$.

\begin{rmk}
Equation \eqref{1-final-sum} is a special case of the following more general formula\footnote{We thank O. Warnaar for bringing equations \eref{q-pfaff} and \eref{q-pfaff-restrict} to our attention.}:
\begin{align}
\label{q-pfaff}
\sum_{\lambda:\mu \subseteq \lambda \subseteq \nu} 
(-1)^{|\lambda|}
b_{\lambda}(t) 
\psi_{\lambda / \mu}(t) 
\psi'_{\nu / \lambda}(t) 
=
(-1)^{|\mu|} 
t^{|\nu-\mu|}
b_{\mu}(t) 
\psi'_{\nu / \mu} (t).
\end{align}
Indeed, if $\ell(\nu) \leq n$ then when $|\nu - \mu| > n$ it is not possible for $\nu - \mu$ to be a vertical strip, meaning that the right hand side of \eref{q-pfaff} vanishes. Equation \eref{q-pfaff} follows from the (multivariate) $q$-Pfaff-Saalsch\"{u}tz-Rains Macdconald summation formula (Corollary 4.9 of \cite{rai} with $P$ replaced by $Q$) with (in the notation of that paper): 
\begin{align*}
b=a/q,\ \ c=bt,\ \ a \to 0,\ \ q \to 0.
\end{align*}
The limits are taken in the prescribed order after making the substitutions and using the homogeneity of Macdonald polynomials to cancel all powers of $a$ and $q$ appearing. For details about the simplifications that occur to obtain \eqref{q-pfaff}, see \cite{war2}.  
\end{rmk}

\medskip
Returning to the proof of the polynomiality property {\bf 2}, using the identity 
\eref{alt-form-cauchy} it is sufficient to show that
\begin{align*}
\prod_{i,j=1}^{n}
(1-x_i y_j)
\sum_{\lambda : \ell(\lambda) = n}
b_{\lambda}(t)
P_{\lambda}(x_1,\dots,x_n;t)
P_{\lambda}(y_1,\dots,y_n;t)
\end{align*}
is a polynomial in $x_n$ of degree $n$. The similarity between this sum and the sum appearing on the left hand side of the Cauchy identity \eref{hl-cauchy} is apparent: indeed the only difference is the constraint $\ell(\lambda)=n$, and the fact that we now consider the case $m=n$. Hence by following the same steps as those outlined above (modulo length constraints which are now imposed on the partitions being summed over) we see find that property {\bf 2} is equivalent to proving that: 
\newline

\noindent
\fbox{
\begin{minipage}{6.3in}
\begin{equation}
\label{1-final-sum-res}
\sum_{\lambda : \ell(\lambda) = n}
(-1)^{|\lambda|}
b_{\lambda}(t)
\psi_{\lambda/\mu}(t)
\psi'_{\nu/\lambda}(t)
=
0
\end{equation}
for all partitions $\mu$ of length $\ell(\mu) = n-1$ and $\nu$ of length $\ell(\nu) = n$, such that 
$|\nu-\mu| > n$.
\end{minipage}
}

\medskip
Let us define the sums
\begin{align*}
\mathcal{S}_{\leq n}(\mu,\nu)
=
\sum_{\lambda: \ell(\lambda) \leq n}
(-1)^{|\lambda|}
b_{\lambda}(t)
\psi_{\lambda/\mu}(t)
\psi'_{\nu/\lambda}(t),
\quad\quad
\mathcal{S}_{=n}(\mu,\nu)
=
\sum_{\lambda: \ell(\lambda) = n}
(-1)^{|\lambda|}
b_{\lambda}(t)
\psi_{\lambda/\mu}(t)
\psi'_{\nu/\lambda}(t),
\end{align*}
where we fix partitions $\mu$, $\nu$ that satisfy $\ell(\mu)=n-1$, $\ell(\nu)=n$, and $|\nu-\mu| > n$. We can clearly write
\begin{align}
\label{triv-eq}
\mathcal{S}_{=n}(\mu,\nu)
=
\mathcal{S}_{\leq n}(\mu,\nu)
-
\mathcal{S}_{\leq n-1}(\mu,\nu),
\end{align}
where we already know that $\mathcal{S}_{\leq n}(\mu,\nu) = 0$ using equation \eref{1-final-sum}. It remains only to show that
\begin{align}
\label{S_(n-1)}
\mathcal{S}_{\leq n-1}(\mu,\nu)
=
\sum_{\lambda: \ell(\lambda) \leq n-1}
(-1)^{|\lambda|}
b_{\lambda}(t)
\psi_{\lambda/\mu}(t)
\psi'_{\nu/\lambda}(t)
\end{align}
vanishes. If the final part of $\nu$ satisfies $\nu_n > 1$, then \eref{S_(n-1)} is zero (since 
$\nu-\lambda$ will never be a vertical strip). Hence we restrict our attention to the case 
$\nu = \tilde{\nu} \cup 1$, where $\ell(\tilde{\nu}) = n-1$. Furthermore, since 
$\ell(\mu) = n-1$, the only partitions $\lambda$ which give a non-zero contribution are those such that $\ell(\lambda) = n-1$ (otherwise $\lambda - \mu$ is not a horizontal strip). Hence all non-zero $\psi'_{\nu/\lambda}(t)$ in \eref{S_(n-1)} satisfy
\begin{align*}
\psi'_{\nu/\lambda}(t)
=
\binomQ{m_1(\nu)}{1}_t
\psi'_{\tilde{\nu}/\lambda}(t),
\end{align*}
and we obtain
\begin{align*}
\mathcal{S}_{\leq n-1}(\mu,\nu)
=
\binomQ{m_1(\nu)}{1}_t
\sum_{\lambda: \ell(\lambda) \leq n-1}
(-1)^{|\lambda|}
b_{\lambda}(t)
\psi_{\lambda/\mu}(t)
\psi'_{\tilde{\nu}/\lambda}(t).
\end{align*}
But this final sum is zero, using \eqref{1-final-sum} (since $|\tilde{\nu} - \mu| > n-1$). So everything on the right hand side of \eqref{triv-eq} is zero, which proves \eref{1-final-sum-res}.  

\begin{rmk}
As before, we comment that \eref{1-final-sum-res} is a special case of a more general equation:
\begin{align}
\label{q-pfaff-restrict}
\sum_{\substack{\lambda:\ell(\lambda) = \ell(\nu) \\ \mu \subseteq \lambda \subseteq \nu}} 
(-1)^{|\lambda|}
b_{\lambda}(t) 
\psi_{\lambda / \mu}(t) 
\psi'_{\nu / \lambda}(t) 
=
\left\{
\begin{array}{ll}
(-1)^{|\mu|} 
t^{|\nu-\mu|}
b_{\mu}(t) 
\psi'_{\nu / \mu} (t),
&
\quad
\ell(\mu) = \ell(\nu),
\\ \\
(-1)^{|\mu|} 
t^{|\nu-\mu|}
(1-t^{-1})
b_{\mu}(t) 
\psi'_{\nu / \mu} (t),
&
\quad
\ell(\mu) = \ell(\nu)-1,
\end{array}
\right.
\end{align}
with all other cases being trivially zero.

\end{rmk}

\subsubsection{Recursion relation}
\label{1-rec}

Applying the branching rule \eref{branch2} to both $P_{\lambda}(x_1,\dots,x_n;t)$ and 
$P_{\lambda}(y_1,\dots,y_n;t)$, equation \eref{alt-form-cauchy} becomes
\begin{align*}
&
\prod_{i=1}^{n} (x_i y_i)
\mathcal{F}_n(x_1,\dots,x_n)
=
\\
&
\sum_{\lambda: \ell(\lambda) = n}
\
\sum_{\substack{\mu: \ell(\mu) = n-1 \\ \mu' = \lambda' - \delta}}
\
\sum_{\substack{\nu: \ell(\nu) = n-1 \\ \nu' = \lambda' - \epsilon}}
b_{\lambda}(t)
\psi_{\lambda/\mu}(t)
\psi_{\lambda/\nu}(t)
x_n^{|\lambda-\mu|}
y_n^{|\lambda-\nu|}
P_{\mu}(\vec{x}_{n-1};t)
P_{\nu}(\vec{y}_{n-1};t)
=
\\
&
\sum_{\lambda: \ell(\lambda) = n}
\
\sum_{\substack{\mu: \ell(\mu) = n-1 \\ \mu' = \lambda' - \delta}}
\
\sum_{\substack{\nu: \ell(\nu) = n-1 \\ \nu' = \lambda' - \epsilon}}
b_{\lambda}(t)
\prod_{\substack{\delta_i = 0 \\ \delta_{i+1} = 1}}
(1-t^{m_i(\mu)})
\prod_{\substack{\epsilon_j = 0 \\ \epsilon_{j+1} = 1}}
(1-t^{m_j(\nu)})
x_n^{|\lambda-\mu|}
y_n^{|\lambda-\nu|}
P_{\mu}(\vec{x}_{n-1};t)
P_{\nu}(\vec{y}_{n-1};t)
.
\end{align*}
Setting $x_n = 1/(t y_n)$, we obtain
\begin{align*}
&
\prod_{i=1}^{n-1}
(x_i y_i)
t^{-1}
\mathcal{F}_n
\Big|_{x_n = 1/(t y_n)}
=
\\
&
\sum_{\lambda: \ell(\lambda) = n}
\
\sum_{\substack{\mu: \ell(\mu) = n-1 \\ \mu' = \lambda' - \delta}}
\
\sum_{\substack{\nu: \ell(\nu) = n-1 \\ \nu' = \lambda' - \epsilon}}
b_{\lambda}(t)
\prod_{\substack{\delta_i = 0 \\ \delta_{i+1} = 1}}
(1-t^{m_i(\mu)})
\prod_{\substack{\epsilon_j = 0 \\ \epsilon_{j+1} = 1}}
(1-t^{m_j(\nu)})
t^{|\mu-\lambda|}
y_n^{|\mu-\nu|}
P_{\mu}(\vec{x}_{n-1};t)
P_{\nu}(\vec{y}_{n-1};t).
\end{align*}
We isolate the coefficient of $P_{\mu}(x_1,\dots,x_{n-1};t) P_{\nu}(y_1,\dots,y_{n-1};t) y_n^{|\mu-\nu|}$ in the preceding expression, and denote it by $\mathcal{C}(\mu,\nu)$:
\begin{align*}
\mathcal{C}(\mu,\nu)
=
\sum_{\substack{\lambda: \ell(\lambda) = n 
\\ \lambda' = \mu' + \delta \\ \lambda' = \nu' + \epsilon}}
t^{-|\delta|}
b_{\lambda}(t)
\prod_{\substack{\delta_i = 0 \\ \delta_{i+1} = 1}}
(1-t^{m_i(\mu)})
\prod_{\substack{\epsilon_j = 0 \\ \epsilon_{j+1} = 1}}
(1-t^{m_j(\nu)}).
\end{align*}
To prove the required recursion relation, we wish to show that
\begin{align*}
\mathcal{C}(\mu,\nu)
=
\left\{
\begin{array}{ll}
-t^{n-1} b_{\mu}(t),
\quad
&
\mu = \nu,
\\
0,
\quad
&
\mu \not= \nu.
\end{array}
\right.
\end{align*}
We start by considering the diagonal case $\mu = \nu$, for which
\begin{align*}
\mathcal{C}(\mu,\mu)
=
\sum_{j=1}^{\infty}
\sum_{\delta_j \in \{0,1\}}
t^{-|\delta|}
b_{\lambda}(t)
\prod_{\substack{\delta_k = 0 \\ \delta_{k+1} = 1}}
(1-t^{m_k(\mu)})^2
\end{align*}
where we now sum over all $\delta_j \in \{0,1\}$, with $\lambda$ given by $\lambda' = \mu' + \delta$. At first it seems that we must exclude the possibility $\delta_j = 0, \delta_{j+1} = 1$ when $\mu'_j = \mu'_{j+1}$, since this gives rise to $\lambda$ which is not a partition. In fact we can ignore this constraint entirely, since $\mu'_j = \mu'_{j+1}$ implies that $m_j(\mu) = 0$ and the term $(1-t^{m_j(\mu)})$ vanishes, meaning 
$\delta_j=0,\delta_{j+1}=1$ gives no contribution to the summation in any case. We define the partial coefficients
\begin{align}
\label{part-coeff1}
\mathcal{C}_{i,\delta_i}(\mu)
=
\sum_{j=1}^{i-1}
\sum_{\delta_j \in \{0,1\}}
t^{-\sum_{k=1}^{i} \delta_k}
\prod_{k=1}^{i-1}
\prod_{l=1}^{m_k(\lambda)}
(1-t^l)
\prod_{\substack{1 \leq k \leq i-1 \\ \delta_k = 0 \\ \delta_{k+1} = 1}}
(1-t^{m_k(\mu)})^2,
\end{align}
where $\delta_i$ can be either 0 or 1, and $\lambda$ is the length $n$ partition formed by taking $\lambda'_j=\mu'_j+\delta_j$ for all $1 \leq j \leq i$, $\lambda'_j = \mu'_j$ for all $j > i$. We proceed to establish some recurrence relations which these coefficients obey. Consider the coefficient $\mathcal{C}_{i+1,\delta_{i+1}}(\mu)$ in the case 
$\delta_{i+1} = 0$, and perform the summation on $\delta_i$ explicitly. This produces the recurrence
\begin{align}
\label{recur-1}
\mathcal{C}_{i+1,0}(\mu)
&=
\prod_{j=1}^{m_i(\mu)}
(1-t^j)
\left[
\mathcal{C}_{i,0}(\mu)
+
(1-t^{m_i(\mu)+1})
\mathcal{C}_{i,1}(\mu)
\right]
\end{align}
valid for all $i \geq 1$, with initial values $\mathcal{C}_{1,0}(\mu) = 0$ ($\delta_1 = 0$ is forbidden, because it would lead to $\ell(\lambda) = n-1$) and 
$\mathcal{C}_{1,1}(\mu) = t^{-1}$. Similarly, the $\delta_{i+1} = 1$ case of 
$\mathcal{C}_{i+1,\delta_{i+1}}(\mu)$ gives rise to the recurrence
\begin{align}
\label{recur-2}
t
\mathcal{C}_{i+1,1}(\mu)
&=
\prod_{j=1}^{m_i(\mu)}
(1-t^j)
\left[
(1-t^{m_i(\mu)})
\mathcal{C}_{i,0}(\mu)
+
\mathcal{C}_{i,1}(\mu)
\right]
\end{align}
valid for all $i \geq 1$, where we have again summed over both possible values of 
$\delta_i$ to obtain the result. Since $m_i(\mu) = 0$ for all $i > \mu_1$, the recurrence relations \eref{recur-1} and \eref{recur-2} eventually stabilize:
\begin{align*}
\mathcal{C}_{j,0}(\mu)
=
\mathcal{C}_{i,0}(\mu)
+
(1-t)
\sum_{k=i}^{j-1}
\mathcal{C}_{k,1}(\mu),
\quad\quad
\mathcal{C}_{j,1}(\mu)
=
t^{i-j}
\mathcal{C}_{i,1}(\mu),
\quad\quad
\forall\ j > i > \mu_1. 
\end{align*}
It follows immediately that the quantity that we wish to compute, $\mathcal{C}(\mu,\mu)$, is given by
\begin{align*}
\mathcal{C}(\mu,\mu)
=
\lim_{i \rightarrow \infty}
\mathcal{C}_{i,0}(\mu)
=
\mathcal{C}_{\mu_1+1,0}(\mu)
-
t
\mathcal{C}_{\mu_1+1,1}(\mu). 
\end{align*}
For this reason, we now consider the linear combination of coefficients 
$\mathcal{C}_{i,0}(\mu) - t \mathcal{C}_{i,1}(\mu)$. Subtracting equation \eref{recur-2} from \eref{recur-1}, we find that this linear combination satisfies the elementary recurrence
\begin{align}
\label{recur-final}
\mathcal{C}_{i+1,0}(\mu) - t \mathcal{C}_{i+1,1}(\mu)
=
t^{m_i(\mu)}
\prod_{j=1}^{m_i(\mu)}
(1-t^j)
\Big[
\mathcal{C}_{i,0}(\mu) 
- 
t 
\mathcal{C}_{i,1}(\mu)
\Big] 
\end{align}
with initial condition $\mathcal{C}_{1,0}(\mu) - t \mathcal{C}_{1,1}(\mu) = -1$. Solving the recurrence \eref{recur-final}, we find that
\begin{align*}
\mathcal{C}_{\mu_1+1,0}(\mu)
-
t
\mathcal{C}_{\mu_1+1,1}(\mu)
=
-t^{\sum_{i=1}^{\infty} m_i(\mu)}
\prod_{i=1}^{\infty}
\prod_{j=1}^{m_i(\mu)}
(1-t^j)
=
-t^{n-1}
b_{\mu}(t), 
\end{align*}
where we have used the fact that $m_i(\mu) = 0$ for all $i > \mu_1$ to produce the first equality, and the fact that the sum of all the multiplicities in $\mu$ is equal to $n-1$ to produce the second. This completes the proof of the diagonal case $\mu = \nu$.

In the non-diagonal case $\mu \not= \nu$, we follow a similar procedure to that outlined above. We wish to calculate
\begin{align*}
\mathcal{C}(\mu,\nu)
=
\sum_{j=1}^{\infty}
\sum_{\substack{\delta_j \in \{0,1\} \\ \epsilon_j \in \{0,1\}}}
t^{-|\delta|}
b_{\lambda}(t)
\prod_{\substack{\delta_k = 0 \\ \delta_{k+1} = 1}}
(1-t^{m_k(\mu)})
\prod_{\substack{\epsilon_k = 0 \\ \epsilon_{k+1} = 1}}
(1-t^{m_k(\nu)}),
\end{align*}
where $\lambda$ is the length $n$ partition given by 
$\lambda' = \mu' + \delta = \nu' + \epsilon$. Since 
$\delta_i, \epsilon_i \in \{0,1\}$ for all $i$, it follows that $|\mu'_i-\nu'_i| \leq 1$ for all $i$, or else $\mathcal{C}(\mu,\nu)$ is necessarily zero. We define a sequence of partial coefficients
\begin{align*}
\mathcal{C}_{i,\delta_i,\epsilon_i}(\mu,\nu)
=
\sum_{j=1}^{i-1}
\sum_{\substack{\delta_j \in \{0,1\} \\ \epsilon_j \in \{0,1\}}}
t^{-\sum_{k=1}^{i} \delta_k}
\prod_{k=1}^{i-1}
\prod_{l=1}^{m_k(\lambda)}
(1-t^l)
\prod_{\substack{1 \leq k \leq i-1 \\ \delta_k = 0 \\ \delta_{k+1} = 1}}
(1-t^{m_k(\mu)})
\prod_{\substack{1 \leq k \leq i-1 \\ \epsilon_k = 0 \\ \epsilon_{k+1} = 1}}
(1-t^{m_k(\nu)}),
\end{align*}
where both $\delta_i$ and $\epsilon_i$ take some value in $\{0,1\}$. We let $\ihat$ denote the largest $i$ such that $|\mu'_i-\nu'_i| = 1$, \ie $\mu'_i = \nu'_i$ for all $i > \ihat$. Then either 
$\delta_{\ihat} = 1, \epsilon_{\ihat} = 0$ or $\delta_{\ihat} = 0, \epsilon_{\ihat} = 1$, and the summation is constrained by $\delta_i = \epsilon_i$ for all $i > \ihat$. Given that the summation indices are forced in this way, it is easy to deduce the recurrences
\begin{align}
\label{non-diag-rec1}
\mathcal{C}_{\ihat+1,0,0}(\mu,\nu)
=
\prod_{j=1}^{m_{\ihat}(\nu)}
(1-t^j)
\mathcal{C}_{\ihat,1,0}(\mu,\nu),
\quad
t
\mathcal{C}_{\ihat+1,1,1}(\mu,\nu)
=
\prod_{j=1}^{m_{\ihat}(\nu)}
(1-t^j)
\mathcal{C}_{\ihat,1,0}(\mu,\nu),
\quad
\text{when}\ 
\delta_{\ihat} = 1, \epsilon_{\ihat} = 0,
\\
\label{non-diag-rec2}
\mathcal{C}_{\ihat+1,0,0}(\mu,\nu)
=
\prod_{j=1}^{m_{\ihat}(\mu)}
(1-t^j)
\mathcal{C}_{\ihat,0,1}(\mu,\nu),
\quad
t
\mathcal{C}_{\ihat+1,1,1}(\mu,\nu)
=
\prod_{j=1}^{m_{\ihat}(\mu)}
(1-t^j)
\mathcal{C}_{\ihat,0,1}(\mu,\nu),
\quad
\text{when}\ 
\delta_{\ihat} = 0, \epsilon_{\ihat} = 1,
\end{align}
while for $i > \ihat$, we recover the same recurrences already obtained when considering the diagonal case $\mu=\nu$:
\begin{align*}
\mathcal{C}_{i+1,0,0}(\mu,\nu)
&=
\prod_{j=1}^{m_i(\mu)}
(1-t^j)
\left[
\mathcal{C}_{i,0,0}(\mu,\nu)
+
(1-t^{m_i(\mu)+1})
\mathcal{C}_{i,1,1}(\mu,\nu)
\right],
\\
t
\mathcal{C}_{i+1,1,1}(\mu,\nu)
&=
\prod_{j=1}^{m_i(\mu)}
(1-t^j)
\left[
(1-t^{m_i(\mu)})
\mathcal{C}_{i,0,0}(\mu,\nu)
+
\mathcal{C}_{i,1,1}(\mu,\nu)
\right]
.
\end{align*}
Hence by applying precisely the same reasoning as above, we conclude that
\begin{align}
\mathcal{C}(\mu,\nu)
=
\lim_{i \rightarrow \infty}
\mathcal{C}_{i,0,0}(\mu,\nu)
=
\mathcal{C}_{M+1,0,0}(\mu,\nu)
-
t
\mathcal{C}_{M+1,1,1}(\mu,\nu), 
\end{align}
where $M = \max(\mu_1,\nu_1)$, to cater for the case where these may be different. Hence we are again required to compute $\mathcal{C}_{i,0,0}(\mu,\nu) - t \mathcal{C}_{i,1,1}(\mu,\nu)$, which can be done via a recurrence of the form \eref{recur-final}. However, in contrast to the above, the initial condition of this recurrence is $\mathcal{C}_{\ihat+1,0,0}(\mu,\nu) - t \mathcal{C}_{\ihat+1,1,1}(\mu,\nu) = 0$ (by virtue of \eref{non-diag-rec1} and \eref{non-diag-rec2}). Because of this trivial initial condition, it follows that 
$\mathcal{C}_{i,0,0}(\mu,\nu) - t \mathcal{C}_{i,1,1}(\mu,\nu) = 0$ for all $i>\ihat$, which is what we aimed to show.

\subsubsection{Initial condition}
\label{1-ic}

In the case $n=1$, the Hall--Littlewood polynomials appearing in the sum \eref{F_n} depend on a single variable. Hence the partitions in the sum are constrained by 
$\ell(\lambda) \leq 1$. It follows that
\begin{align}
\mathcal{F}_1(x_1)
=
\sum_{k=0}^{\infty}
(1-t)
P_{(k)}(x_1;t)
P_{(k)}(y_1;t)
=
(1-t)
\sum_{k=0}^{\infty}
x_1^k y_1^k
=
\frac{1-t}{1-x_1 y_1}.
\end{align}

\subsection{Proof of equation \eref{HL-OSASM}}
\label{proof-OSASM}

In this subsection we prove Theorem \ref{littlewood-thm}, which is equivalent to proving equation \eref{HL-OSASM}. Similarly to the previous proof, we will show that the left hand side of \eref{HL-OSASM} satisfies four conditions, which are basic properties of the right hand side. Since these conditions only admit a unique solution, it follows that the two sides of \eref{HL-OSASM} must be equal.

\begin{thm}
\label{littlewood-thm}
Let $N=2n$. Define the function
\begin{align}
\label{G_N}
\mathcal{G}_{N}(x_1,\dots,x_N)
=
\sum_{\substack{\lambda\ {\rm with} \\ {\rm even\ columns}}} \ \ 
w_{\lambda}^{\rm el}(N,t) 
P_{\lambda}(x_1,\dots,x_N;t),
\end{align}
where for convenience we have set
\begin{align}
w_{\lambda}^{\rm el}(N,t)
= 
\prod_{i=0}^{\infty}\ \prod_{j=2,4,6,\dots}^{m_i(\lambda)} 
(1-t^{j-1}).
\end{align}
Then $\mathcal{G}_{N}(x_1,\dots,x_N)$ satisfies the following list of properties:
\begin{enumerate}[label=\bf\arabic*.]

\item $\mathcal{G}_{N}(x_1,\dots,x_N)$ is symmetric in $\{x_1,\dots,x_N\}$.

\item The renormalized function $\prod_{1 \leq i<j \leq N} (1-x_i x_j) \mathcal{G}_{N}(x_1,\dots,x_N)$ is a polynomial in $x_N$ of degree $N-2$.

\item Setting $x_N = 1/(t x_{N-1})$, one obtains the recursion
\begin{align*}
\mathcal{G}_N \Big|_{x_N = 1/(t x_{N-1})}
=
-t^{N-1}
\mathcal{G}_{N-2}(x_1,\dots,x_{N-2}).
\end{align*} 

\item $\mathcal{G}_2(x_1,x_2) = (1-t)/(1-x_1 x_2)$.
\end{enumerate}
\end{thm}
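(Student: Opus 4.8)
The plan is to verify the four properties directly, using exactly the ingredients of Section~\ref{proof-ASM} --- the branching rule \eref{branch2} and the Pieri rule \eref{pieri} --- together with a Littlewood-type companion of the Cauchy identity \eref{hl-cauchy}; the equality of $\mathcal{G}_N$ with the Pfaffian in \eref{HL-OSASM} then follows from the usual Izergin--Korepin uniqueness argument, since the right-hand side manifestly enjoys the same four properties. Property~\textbf{1} is immediate, as $\mathcal{G}_N$ is a sum of symmetric polynomials. Property~\textbf{4} is a one-line computation: for $N=2$ the only even-column partitions of length $\le 2$ are $\lambda=(k,k)$, $k\ge 0$, with $P_{(k,k)}(x_1,x_2;t)=(x_1 x_2)^k$ and $w_{(k,k)}^{\rm el}(2,t)=1-t$, so $\mathcal{G}_2=(1-t)\sum_{k\ge 0}(x_1 x_2)^k=(1-t)/(1-x_1 x_2)$. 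As in Section~\ref{1-alt} I would first record an alternative form: setting $b^{\rm el}_{\lambda}(t)=\prod_{i\ge 1}\prod_{j=2,4,\dots}^{m_i(\lambda)}(1-t^{j-1})$, and noting that prepending a full column of (even) length $N$ maps even-column partitions of length $\le N$ bijectively onto even-column partitions of length exactly $N$ while sending $w^{\rm el}_{\lambda}(N,t)$ to $b^{\rm el}$ of the enlarged partition, identity \eref{imptt-id} yields $(x_1\cdots x_N)\,\mathcal{G}_N=\sum_{\lambda:\ \ell(\lambda)=N,\ \text{even cols}}b^{\rm el}_{\lambda}(t)\,P_{\lambda}(\vec{x}_N;t)$, the exact analogue of \eref{alt-form-cauchy}.

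For the polynomiality property~\textbf{2} I would run the argument of Section~\ref{1-poly} almost verbatim, with the Cauchy identity \eref{hl-cauchy} replaced by its even-column Littlewood analogue $\prod_{1\le i<j\le m}(1-x_i x_j)\sum_{\text{even cols}}b^{\rm el}_{\lambda}P_{\lambda}(\vec{x}_m;t)=\prod_{1\le i<j\le m}(1-t x_i x_j)$. Factoring off the $x_N$-dependent part $\prod_{i<N}(1-x_i x_N)=\sum_{k}e_k(\vec{x}_{N-1})(-x_N)^k$, branching $P_{\lambda}(\vec{x}_N)$ via \eref{branch2} and recombining with \eref{pieri} expresses the left-hand side as a triple sum over $\lambda$ (even columns), an intermediate $\mu$, and a Pieri-raised $\nu$, in which $x_N$ appears only as $x_N^{|\lambda-\mu|+|\nu-\mu|}$. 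Since the Littlewood identity shows this is a polynomial in $x_N$ of degree $N-1$, linear independence of the $P_{\nu}(\vec{x}_{N-1})$ forces the coefficients of all higher powers to vanish --- the even-column analogue of \eref{1-final-sum}. Passing from the unrestricted sum to the length-restricted sum of the alternative form (the analogue of \eref{1-final-sum-res}) is done exactly as in \eref{triv-eq}, except that the telescoping step now lowers the length by $2$ rather than $1$, reflecting the even-column constraint; dividing out the remaining factor $x_1\cdots x_N$ then drops the degree from $N-1$ to the claimed $N-2$.

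The recursion property~\textbf{3} is the heart of the proof. Starting from the alternative form and branching $P_{\lambda}(\vec{x}_N)$ down twice --- first in $x_N$ to an intermediate $\mu$, then in $x_{N-1}$ to $\kappa$ --- the specialization $x_N=1/(t x_{N-1})$ turns the monomial $x_N^{|\lambda-\mu|}x_{N-1}^{|\mu-\kappa|}$ into $t^{-|\lambda-\mu|}x_{N-1}^{|\mu-\kappa|-|\lambda-\mu|}$. Extracting the coefficient of $P_{\kappa}(\vec{x}_{N-2};t)$ then reduces everything to a single combinatorial identity: $\sum_{\lambda:\ \ell(\lambda)=N,\ \text{even cols}}b^{\rm el}_{\lambda}(t)\,P_{\lambda/\kappa}\big(x_{N-1},1/(t x_{N-1});t\big)$ must equal $-t^{N-2}b^{\rm el}_{\kappa}(t)$ when $\kappa$ has even columns and length $N-2$, and vanish otherwise. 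I would establish this in the spirit of the computation of $\mathcal{C}(\mu,\nu)$ in Section~\ref{1-rec}: writing the two-variable skew polynomial as a sum over the horizontal-strip data $\delta_i=\lambda'_i-\mu'_i$ and $\epsilon_i=\mu'_i-\kappa'_i$ in $\{0,1\}$, I would introduce partial coefficients summing over columns $1,\dots,i$, derive the (now coupled, four-branch) recurrences in $(\delta_i,\epsilon_i)$, and solve them by the same linear-combination trick that reduced \eref{recur-1}--\eref{recur-2} to the elementary recurrence \eref{recur-final}.

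The hard part, and what makes this genuinely heavier than the Cauchy recursion, is twofold. First, the even-column constraint $\lambda'_i=\kappa'_i+\delta_i+\epsilon_i\in 2\mathbb{Z}$ couples the two strip variables through the parity of $\kappa'_i$; it is exactly this parity bookkeeping that must simultaneously force $\kappa$ to have even columns, pin $\ell(\kappa)=N-2$ (so that $\delta_1=\epsilon_1=1$), and --- crucially --- kill every surviving power $x_{N-1}^{|\mu-\kappa|-|\lambda-\mu|}$ of nonzero exponent, leaving the genuinely $x_{N-1}$-independent answer that the recursion demands. Second, unlike Section~\ref{1-rec}, here a whole intermediate partition $\mu$ is summed over rather than held fixed, so the recurrences must carry both $\delta_i$ and $\epsilon_i$ while the weight $b^{\rm el}_{\lambda}$ depends on the even multiplicities $m_i(\lambda)=(\kappa'_i-\kappa'_{i+1})+(\delta_i-\delta_{i+1})+(\epsilon_i-\epsilon_{i+1})$; checking that the factors $(1-t^{j-1})$ with $j$ even reassemble into $b^{\rm el}_{\kappa}$ with the clean prefactor $-t^{N-2}$ is the delicate step I expect to consume most of the work. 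Once property~\textbf{3} is in place, the four properties combine with the matching properties of the Pfaffian to prove \eref{HL-OSASM}.
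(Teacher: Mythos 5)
Your proposal follows the paper's own proof very closely. You use the same alternative form for $\mathcal{G}_N$ (stripping the first column to trade $w^{\rm el}_{\lambda}(N,t)$ for $b^{\rm el}_{\lambda}(t)$ over even-column partitions of length exactly $N$); the same reduction of property \textbf{3} to evaluating $\sum_{\lambda}b^{\rm el}_{\lambda}(t)\,P_{\lambda/\kappa}(x,1/(tx);t)$ over even-column $\lambda$ of length $N$, which is precisely the paper's coefficient $\mathcal{D}(\nu)$ in Section \ref{2-rec}, and your target value $-t^{N-2}b^{\rm el}_{\kappa}(t)$ is the correct one (it is what the paper's computation actually produces); the same coupled strip data $(\delta_i,\epsilon_i)$ with the parity argument forcing $\delta_i=\epsilon_i$ when $\kappa'$ is even and forcing vanishing otherwise; the same partial coefficients, four-branch recurrences and linear-combination trick; and the same $N=2$ check and Izergin--Korepin conclusion.

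The one step that would fail as literally described is your treatment of property \textbf{2}. You say the passage from the unrestricted sum to the sum over $\ell(\lambda)=N$ is ``done exactly as in \eref{triv-eq}, except that the telescoping step now lowers the length by $2$''. In the Cauchy case that argument works because the leftover sum $\mathcal{S}_{\leq n-1}(\mu,\nu)$ vanishes identically: via $\nu=\tilde{\nu}\cup 1$ it reduces to an instance of \eref{1-final-sum}. In the Littlewood case the leftover $\sum_{\ell(\lambda)\leq N-2}$ does \emph{not} vanish; it is an infinite sum that must instead be shown to be a polynomial in $x_N$ of degree at most $N-1$. Controlling it uses the reduction $\nu=\kappa\cup 1^{N-k-1}$, $\psi'_{\nu/\mu}(t)=\binomQ{m_1(\nu)}{N-k-1}_t\,\psi'_{\kappa/\mu}(t)$, which extracts an explicit factor $x^{N-k-1}$ multiplying a sum that is again length-restricted ($\ell(\mu)=k$); one therefore cannot quote the unrestricted statement \eref{2-final-sum} and is forced into an induction on $N$ --- exactly the paper's chain of propositions $\mathcal{P}_N$ in Section \ref{2-poly}, whose base case and inductive step supply what your sketch omits. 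With that repair, which uses only ingredients you already have on the table, your proposal coincides with the paper's proof.
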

The property {\bf 1} is obvious, since $\mathcal{G}_{N}(x_1,\dots,x_N)$ is a sum of Hall--Littlewood polynomials and therefore manifestly symmetric in $\{x_1,\dots,x_N\}$. As we did in the proof of Theorem \ref{cauch-thm}, we begin with an alternative expression for $\mathcal{G}_{N}(x_1,\dots,x_N)$ in Section \ref{2-alt}, before proving the remaining properties {\bf 2}--{\bf 4} in Sections \ref{2-poly}--\ref{2-ic}.

\subsubsection{Alternative form for $\mathcal{G}_N(x_1,\dots,x_N)$}
\label{2-alt}

The function $w_{\lambda}^{\rm el}(N,t)$ bears close resemblance to the function 
$b^{\rm el}_{\lambda}(t)$, which usually appears in the Littlewood identity for 
Hall--Littlewood polynomials\footnote{The superscript in $w_{\lambda}^{\rm el}(N,t)$ and $b^{\rm el}_{\lambda}(t)$ is for {\it even legs}, since in the Macdonald case 
$b^{\rm el}_{\lambda}(q,t)$ is defined as a product over all boxes in $\lambda$ with even leg-length \cite{mac}.}:
\begin{align*}
b^{\rm el}_{\lambda}(t)
=
\prod_{i=1}^{\infty}\ 
\prod_{j=2,4,6,\dots}^{m_i(\lambda)} 
(1-t^{j-1}).
\end{align*}
The only difference between the two functions is that $w_{\lambda}^{\rm el}(N,t)$ considers parts of the partition $\lambda$ of size zero, whereas 
$b^{\rm el}_{\lambda}(t)$ does not. By using the identity \eref{imptt-id} once again, it follows that $\mathcal{G}_N(x_1,\dots,x_N)$ can be expressed alternatively as
\begin{align}
\label{alt-form-little}
(x_1 \cdots x_N)
\mathcal{G}_N(x_1,\dots,x_N)
=
\sum_{\substack{\lambda: \ell(\lambda) = N \\ \lambda'\ {\rm even}}}
\
b^{\rm el}_{\lambda}(t) 
P_{\lambda}(x_1,\dots,x_N;t).
\end{align}
This new way of writing $\mathcal{G}_N(x_1,\dots,x_N)$ is useful in establishing the polynomiality property {\bf 2}, as we will see below.

\subsubsection{Polynomiality property}
\label{2-poly}

We start by considering a renormalized version of the Littlewood identity for Hall--Littlewood polynomials (see Section 5, Chapter III of \cite{mac}):
\begin{align}
\label{littlewood-hl}
\prod_{1 \leq i<j \leq N}
(1-x_i x_j)
\sum_{\lambda'\ {\rm even}}
\
b^{\rm el}_{\lambda}(t) 
P_{\lambda}(x_1,\dots,x_N;t)
=
\prod_{1 \leq i<j \leq N}
(1-t x_i x_j).
\end{align}
We deduce that the left hand side of \eref{littlewood-hl} is a polynomial in $x_N$ of degree $N-1$, a fact which is only obvious given its equality with the right hand side. In what follows it will be useful to reformulate this fact, which we do by isolating the $x_N$ dependence:
\begin{align*}
&
\prod_{1 \leq i<j \leq N}
(1-x_i x_j)
\sum_{\lambda'\ {\rm even}}
\
b^{\rm el}_{\lambda}(t) 
P_{\lambda}(\vec{x}_N;t)
\\
&
=
\prod_{1 \leq i<j \leq N-1}
(1-x_i x_j)
\sum_{k=0}^{N-1}
e_k(\vec{x}_{N-1})
(-x_N)^k
\sum_{\lambda'\ {\rm even}}
\
\sum_{\mu}
b^{\rm el}_{\lambda}(t)
\psi_{\lambda/\mu}(t)
x_N^{|\lambda-\mu|}
P_{\mu}(\vec{x}_{N-1};t)
\\
&
=
\prod_{1 \leq i<j \leq N-1}
(1-x_i x_j)
\sum_{\lambda'\ {\rm even}}
\
\sum_{\mu}
\
\sum_{\nu}
(-1)^{|\nu-\mu|}
b^{\rm el}_{\lambda}(t)
\psi_{\lambda/\mu}(t)
\psi'_{\nu/\mu}(t)
x_N^{|\lambda-\mu|+|\nu-\mu|}
P_{\nu}(\vec{x}_{N-1};t),
\end{align*}
where the first equality follows from the definition of the elementary symmetric functions and the branching rule \eref{branch}, and the second equality is obtained from the Pieri identity \eref{pieri}. Now we notice that the summation over $\lambda$ is constrained trivially, since it imposes that $\lambda'$ is even and that $\lambda \succ \mu$. Indeed, given any partition $\mu$, there is a unique way of adding to it a horizontal strip such that the resulting partition has even-length columns. Hence we can write
\begin{align*}
&
\prod_{1 \leq i<j \leq N}
(1-x_i x_j)
\sum_{\lambda'\ {\rm even}}
\
b^{\rm el}_{\lambda}(t) 
P_{\lambda}(\vec{x}_N;t)
\\
&
=
\prod_{1 \leq i<j \leq N-1}
(1-x_i x_j)
\sum_{\mu}
\
\sum_{\nu}
(-1)^{|\nu-\mu|}
b^{\rm el}_{\lambda}(t)
\psi_{\lambda/\mu}(t)
\psi'_{\nu/\mu}(t)
x_N^{|\nu-\mu|+n_{\rm\tiny oc}(\mu)}
P_{\nu}(\vec{x}_{N-1};t),
\end{align*}
where $n_{\rm\tiny oc}(\mu)$ denotes the number of odd-length columns in the partition $\mu$, and $\lambda$ is hereinafter understood as the even-columned partition obtained by adding a horizontal strip to $\mu$. From the linear independence of the Hall--Littlewood polynomials (and eliminating any overall factors which play no role), we obtain at last our reformulation of the polynomiality statement:
\newline

\noindent
\fbox{
\begin{minipage}{6.3in}
\begin{equation}
\label{2-final-sum}
\sum_{\mu}
\
(-1)^{|\mu|}
b^{\rm el}_{\lambda}(t)
\psi_{\lambda/\mu}(t)
\psi'_{\nu/\mu}(t)
x^{|\nu-\mu|+n_{\rm\tiny oc}(\mu)}
\end{equation}
is a polynomial in $x$ of degree $N-1$, for all partitions $\nu$ of length $\ell(\nu) \leq N-1$.
\end{minipage}
}

\medskip

Coming back to the proof of property {\bf 2}, because of the identity \eref{alt-form-little} it suffices to show that
\begin{align*}
\prod_{1 \leq i<j \leq N}
(1-x_i x_j)
\sum_{\substack{\lambda:\ell(\lambda) = N \\ \lambda'\ {\rm even} }}
\
b^{\rm el}_{\lambda}(t) 
P_{\lambda}(x_1,\dots,x_N;t)
\end{align*}
is a polynomial in $x_N$ of degree $N-1$. The strong similarity between the preceding quantity and the left hand side of the Littlewood identity \eref{littlewood-hl} means that we can inherit information from the latter. Indeed, by following precisely the same arguments already outlined above (but paying attention to the new restriction $\ell(\lambda) = N$), property {\bf 2} is equivalent to the statement
\newline

\noindent
\fbox{
\begin{minipage}{6.3in}
\begin{equation}
\label{2-final-sum-res}
\sum_{\mu: \ell(\mu) = N-1}
\
(-1)^{|\mu|}
b^{\rm el}_{\lambda}(t)
\psi_{\lambda/\mu}(t)
\psi'_{\nu/\mu}(t)
x^{|\nu-\mu|+n_{\rm\tiny oc}(\mu)}
\end{equation}
is a polynomial in $x$ of degree $N-1$, for all partitions $\nu$ of length $\ell(\nu) = N-1$.
\end{minipage}
}

\medskip
Letting $\mathcal{P}_{N}$ denote the proposition \eref{2-final-sum-res}, we prove it by induction on $N$. Although we are only interested in the case where $N$ is even, one can prove it for generic $N$. The base case $N=1$ is trivial:
\begin{align*}
\sum_{\mu: \ell(\mu) = 0}
\
(-1)^{|\mu|}
b^{\rm el}_{\lambda}(t)
\psi_{\lambda/\mu}(t)
\psi'_{\nu/\mu}(t)
x^{|\nu-\mu|+n_{\rm\tiny oc}(\mu)}
=
1
\end{align*}
since the only possibility is for the partition $\nu$ to be empty. Now let $N > 1$ and assume that $\mathcal{P}_1,\dots,\mathcal{P}_{N-1}$ are all true. It is clearly possible to write
\begin{multline*}
\sum_{\mu: \ell(\mu) = N-1}
\
(-1)^{|\mu|}
b^{\rm el}_{\lambda}(t)
\psi_{\lambda/\mu}(t)
\psi'_{\nu/\mu}(t)
x^{|\nu-\mu|+n_{\rm\tiny oc}(\mu)}
=
\\
\left(
\sum_{\mu}
-
\sum_{k=0}^{N-2}
\sum_{\mu: \ell(\mu) = k}
\right)
\
(-1)^{|\mu|}
b^{\rm el}_{\lambda}(t)
\psi_{\lambda/\mu}(t)
\psi'_{\nu/\mu}(t)
x^{|\nu-\mu|+n_{\rm\tiny oc}(\mu)},
\end{multline*}
where the first sum is already known to give a polynomial in $x$ of degree $N-1$, thanks to \eref{2-final-sum}. As for the remaining sums, they only give a non-zero result when $\nu = \kappa \cup 1^{N-k-1}$, where $\kappa$ is a partition with length $\ell(\kappa)=k$. Under such circumstances, and with $\ell(\mu) = k$, we have
\begin{align*}
\psi'_{\nu/\mu}(t)
=
\binomQ{m_1(\nu)}{N-k-1}_t
\psi'_{\kappa/\mu}(t).
\end{align*}
This allows us to conclude that
\begin{multline*}
\sum_{\mu: \ell(\mu) = N-1}
\
(-1)^{|\mu|}
b^{\rm el}_{\lambda}(t)
\psi_{\lambda/\mu}(t)
\psi'_{\nu/\mu}(t)
x^{|\nu-\mu|+n_{\rm\tiny oc}(\mu)}
=
\\
\binomQ{m_1(\nu)}{N-k-1}_t
x^{N-k-1}
\sum_{\mu: \ell(\mu) = k}
(-1)^{|\mu|}
b^{\rm el}_{\lambda}(t)
\psi_{\lambda/\mu}(t)
\psi'_{\kappa/\mu}(t)
x^{|\kappa-\mu|+n_{\rm\tiny oc}(\mu)},
\end{multline*}
which is also a polynomial in $x$ of degree $N-1$ from the inductive assumption. Hence $\mathcal{P}_N$ is true, completing the proof.

\subsubsection{Recursion relation}
\label{2-rec}

Applying the branching rule \eref{branch2} twice to $P_{\lambda}(x_1,\dots,x_N;t)$, equation \eref{alt-form-little} becomes
\begin{align*}
&
\prod_{i=1}^{N} (x_i)
\mathcal{G}_N(x_1,\dots,x_N)
=
\\
&
\sum_{\substack{\lambda: \ell(\lambda) = N \\ \lambda'\ \text{even}}}
\
\sum_{\substack{\mu: \ell(\mu) = N-1 \\ \mu' = \lambda' - \delta}}
\
\sum_{\substack{\nu: \ell(\nu) = N-2 \\ \nu' = \mu' - \epsilon}}
b_{\lambda}^{\rm el}(t)
\psi_{\lambda/\mu}(t)
\psi_{\mu/\nu}(t)
x_N^{|\lambda-\mu|}
x_{N-1}^{|\mu-\nu|}
P_{\nu}(\vec{x}_{N-2};t)
=
\\
&
\sum_{\substack{\lambda: \ell(\lambda) = N \\ \lambda'\ \text{even}}}
\
\sum_{\substack{\mu: \ell(\mu) = N-1 \\ \mu' = \lambda' - \delta}}
\
\sum_{\substack{\nu: \ell(\nu) = N-2 \\ \nu' = \mu' - \epsilon}}
b_{\lambda}^{\rm el}(t)
\prod_{\substack{\delta_i = 0 \\ \delta_{i+1} = 1}}
(1-t^{m_i(\mu)})
\prod_{\substack{\epsilon_j = 0 \\ \epsilon_{j+1} = 1}}
(1-t^{m_j(\nu)})
x_N^{|\lambda-\mu|}
x_{N-1}^{|\mu-\nu|}
P_{\nu}(\vec{x}_{N-2};t)
.
\end{align*}
Setting $x_N = 1/(t x_{N-1})$, we find that
\begin{align*}
&
\prod_{i=1}^{N-2} (x_i)
t^{-1}
\mathcal{G}_N
\Big|_{x_N = 1/(t x_{N-1})}
=
\\
&
\sum_{\substack{\lambda: \ell(\lambda) = N \\ \lambda'\ \text{even}}}
\
\sum_{\substack{\mu: \ell(\mu) = N-1 \\ \mu' = \lambda' - \delta}}
\
\sum_{\substack{\nu: \ell(\nu) = N-2 \\ \nu' = \mu' - \epsilon}}
b_{\lambda}^{\rm el}(t)
\prod_{\substack{\delta_i = 0 \\ \delta_{i+1} = 1}}
(1-t^{m_i(\mu)})
\prod_{\substack{\epsilon_j = 0 \\ \epsilon_{j+1} = 1}}
(1-t^{m_j(\nu)})
t^{|\mu-\lambda|}
x_{N-1}^{|\mu-\lambda|+|\mu-\nu|}
P_{\nu}(\vec{x}_{N-2};t).
\end{align*}
We isolate the coefficient of $P_{\nu}(x_1,\dots,x_{N-2};t)$ in the previous expression, and denote it by $\mathcal{D}(\nu)$:
\begin{align*}
\mathcal{D}(\nu)
=
\sum_{\substack{\mu:\ell(\mu) = N-1 \\ \mu' = \nu' + \epsilon}}
\
\sum_{\substack{\lambda:\ell(\lambda) = N 
\\ \lambda' = \mu' + \delta \\ \lambda'\ \text{even}}}
b_{\lambda}^{\rm el}(t)
\prod_{\substack{\delta_i = 0 \\ \delta_{i+1} = 1}}
(1-t^{m_i(\mu)})
\prod_{\substack{\epsilon_j = 0 \\ \epsilon_{j+1} = 1}}
(1-t^{m_j(\nu)})
t^{-|\delta|}
x^{|\epsilon|-|\delta|}
,
\end{align*}
where we write $x_{N-1} \equiv x$, since the subscript is irrelevant in what follows. To prove the required recursion relation, we need to show that
\begin{align*}
\mathcal{D}(\nu)
=
\left\{
\begin{array}{ll}
-t^{N-2},
&
\nu'\ \text{even},
\\
0,
&
\text{otherwise}.
\end{array}
\right.
\end{align*}
We start by considering the case $\nu'\ \text{even}$. Since $\lambda$ has even length columns, it follows by parity that $\delta_i = \epsilon_i$ for all $i \geq 1$, which simplifies the summation as follows:
\begin{align*}
\mathcal{D}(\nu)
&=
\sum_{j=1}^{\infty}
\sum_{\delta_j \in \{0,1\}}
t^{-|\delta|}
b_{\lambda}^{\rm el}(t)
\prod_{\substack{\delta_k = 0 \\ \delta_{k+1} = 1}}
(1-t^{m_k(\mu)})
(1-t^{m_k(\nu)})
\\
&=
\sum_{j=1}^{\infty}
\sum_{\delta_j \in \{0,1\}}
t^{-|\delta|}
b_{\lambda}^{\rm el}(t)
\prod_{\substack{\delta_k = 0 \\ \delta_{k+1} = 1}}
(1-t^{m_k(\nu)-1})
(1-t^{m_k(\nu)}),
\end{align*}
where in the final line we have used the fact that if $\delta_k = 0, \delta_{k+1} = 1$, then $m_k(\mu) = m_k(\nu)-1$, and where $\lambda$ is the partition satisfying $\lambda' = \nu' + 2 \delta$. Proceeding in direct analogy with Section \ref{1-rec}, we define the sequence of partial coefficients
\begin{align}
\label{D-partial-coeff}
\mathcal{D}_{i,\delta_i}(\nu)
=
\sum_{j=1}^{i-1}
\sum_{\delta_j \in \{0,1\}}
t^{-\sum_{k=1}^{i} \delta_k}
\prod_{k=1}^{i-1}
\prod_{l\ {\rm even}}^{m_k(\lambda)}
(1-t^{l-1})
\prod_{\substack{1 \leq k \leq i-1 \\ \delta_k = 0 \\ \delta_{k+1} = 1}}
(1-t^{m_k(\nu)-1})
(1-t^{m_k(\nu)}),
\end{align}
where $\lambda$ is the partition formed by taking $\lambda'_j = \nu'_j + 2 \delta_j$ for all $1 \leq j \leq i$, $\lambda'_j = \nu'_j$ for all $j > i$. Given the similarity of these coefficients to those defined in equation \eref{part-coeff1}, we expect that they will satisfy recurrence relations of an analogous form to \eref{recur-1} and \eref{recur-2}. Indeed, by taking $\mathcal{D}_{i+1,\delta_{i+1}}(\nu)$ and performing its summation over $\delta_i$ explicitly, we find that
\begin{align}
\label{recur-3}
\mathcal{D}_{i+1,0}(\nu)
&=
\prod_{j\ {\rm even}}^{m_i(\nu)}
(1-t^{j-1})
\left[
\mathcal{D}_{i,0}(\nu)
+
(1-t^{m_i(\nu)+1})
\mathcal{D}_{i,1}(\nu)
\right],
\\
\label{recur-4}
t \mathcal{D}_{i+1,1}(\nu)
&=
\prod_{j\ {\rm even}}^{m_i(\nu)}
(1-t^{j-1})
\left[
(1-t^{m_i(\nu)})
\mathcal{D}_{i,0}(\nu)
+
\mathcal{D}_{i,1}(\nu)
\right],
\end{align}
valid for all $i \geq 1$, with initial values $\mathcal{D}_{1,0}(\nu)=0$ ($\delta_1= 0$ is not allowed, since this would mean that $\ell(\lambda) = N-2$) and 
$\mathcal{D}_{1,1}(\nu)=t^{-1}$. Since the recursion relations \eref{recur-3} and \eref{recur-4} are virtually identical to \eref{recur-1} and \eref{recur-2}, all of the reasoning presented in Section \ref{1-rec} also goes through in the present instance. In particular, we find that
\begin{align*}
\mathcal{D}(\nu)
=
\lim_{i \rightarrow \infty}
\mathcal{D}_{i,0}(\nu)
=
\mathcal{D}_{\nu_1+1,0}(\nu)
-
t
\mathcal{D}_{\nu_1+1,1}(\nu),
\end{align*}
where $\mathcal{D}_{i,0}(\nu) - t\mathcal{D}_{i,1}(\nu)$ obeys the recurrence
\begin{align}
\label{recur-final2}
\mathcal{D}_{i+1,0}(\nu)
-
t
\mathcal{D}_{i+1,1}(\nu)
=
t^{m_i(\nu)}
\prod_{j\ {\rm even}}^{m_i(\nu)}
(1-t^{j-1})
\Big[
\mathcal{D}_{i,0}(\nu)
-
t
\mathcal{D}_{i,1}(\nu)
\Big]
\end{align}
with initial condition $\mathcal{D}_{1,0}(\nu)-t\mathcal{D}_{1,1}(\nu) = -1$. Solving this recurrence, we obtain
\begin{align*}
\mathcal{D}_{\nu_1+1,0}(\nu)
-
t
\mathcal{D}_{\nu_1+1,1}(\nu)
=
-t^{\sum_{i=1}^{\infty} m_i(\nu)}
\prod_{i=1}^{\infty}
\prod_{j\ {\rm even}}^{m_i(\nu)}
(1-t^{j-1})
=
-t^{N-2}
b_{\nu}^{\rm el}(t),
\end{align*}
completing the proof in the case $\nu'\ \text{even}$.

Turning to the case where $\nu$ has at least one column of odd length, our task is to calculate
\begin{align*}
\mathcal{D}(\nu)
=
\sum_{j=1}^{\infty}
\sum_{\substack{\delta_j \in \{0,1\} \\ \epsilon_j \in \{0,1\}}}
t^{-|\delta|}
x^{|\epsilon|-|\delta|}
b_{\lambda}^{\rm el}(t)
\prod_{\substack{\delta_k = 0 \\ \delta_{k+1} = 1}}
(1-t^{m_k(\mu)})
\prod_{\substack{\epsilon_k = 0 \\ \epsilon_{k+1} = 1}}
(1-t^{m_k(\nu)}),
\end{align*}
where $\mu$ is the length $N-1$ partition given by $\mu' = \nu' + \epsilon$, and 
$\lambda$ the length $N$ partition given by $\lambda' = \mu' + \delta$. We define partial coefficients
\begin{multline*}
\mathcal{D}_{i,\delta_i,\epsilon_i}(\nu)
=
\sum_{j=1}^{i-1}
\sum_{\substack{\delta_j \in \{0,1\} \\ \epsilon_j \in \{0,1\}}}
\left[
t^{-\sum_{k=1}^{i} \delta_k}
\right]
\left[
x^{\sum_{k=1}^{i} (\epsilon_k - \delta_k)}
\right]
\\
\times
\prod_{k=1}^{i-1}
\prod_{l\ {\rm even}}^{m_k(\lambda)}
(1-t^{l-1})
\prod_{\substack{1 \leq k \leq i-1 \\ \delta_k = 0 \\ \delta_{k+1} = 1}}
(1-t^{m_k(\mu)})
\prod_{\substack{1 \leq k \leq i-1 \\ \epsilon_k = 0 \\ \epsilon_{k+1} = 1}}
(1-t^{m_k(\nu)})
\end{multline*}
and let $\ihat$ denote the largest $i$ such that $\nu'_i$ is odd (meaning that $\nu'_i$ is even for all $i > \ihat$). Since $\lambda'_{\ihat}$ is necessarily even, it follows that either $\delta_{\ihat} = 1, \epsilon_{\ihat} = 0$ or $\delta_{\ihat} = 0, \epsilon_{\ihat} = 1$. Summing over these possibilities, we obtain the recurrences
\begin{align}
\label{non-diag-rec3}
\mathcal{D}_{\ihat+1,0,0}(\nu)
&=
\prod_{j\ {\rm even}}^{m_{\ihat}(\nu)+1}
(1-t^{j-1})
\Big[
\mathcal{D}_{\ihat,1,0}(\nu)
+
\mathcal{D}_{\ihat,0,1}(\nu)
\Big],
\\
\label{non-diag-rec4}
t
\mathcal{D}_{\ihat+1,1,1}(\nu)
&=
\prod_{j\ {\rm even}}^{m_{\ihat}(\nu)+1}
(1-t^{j-1})
\Big[
\mathcal{D}_{\ihat,1,0}(\nu)
+
\mathcal{D}_{\ihat,0,1}(\nu)
\Big],
\end{align}
where we are only obliged to consider $\delta_{\ihat+1} = \epsilon_{\ihat+1}$, since by the definition of $\ihat$ and using parity, $\delta_i = \epsilon_i$ for all $i > \ihat$. This ensures that for $i > \ihat$,
\begin{align*}
\mathcal{D}_{i+1,0,0}(\nu)
&=
\prod_{j\ {\rm even}}^{m_i(\nu)}
(1-t^{j-1})
\left[
\mathcal{D}_{i,0,0}(\nu)
+
(1-t^{m_i(\nu)+1})
\mathcal{D}_{i,1,1}(\nu)
\right],
\\
t \mathcal{D}_{i+1,1,1}(\nu)
&=
\prod_{j\ {\rm even}}^{m_i(\nu)}
(1-t^{j-1})
\left[
(1-t^{m_i(\nu)})
\mathcal{D}_{i,0,0}(\nu)
+
\mathcal{D}_{i,1,1}(\nu)
\right],
\end{align*}
which are the same recursion relations as \eref{recur-3} and \eref{recur-4}. We are thus in the same situation as in the non-diagonal part ($\mu \not= \nu$) of Section \ref{1-rec}. The quantity that we wish to calculate is
\begin{align*}
\mathcal{D}(\nu)
=
\lim_{i \rightarrow \infty}
\mathcal{D}_{i,0,0}(\nu)
=
\mathcal{D}_{\nu_1+1,0,0}(\nu)
-
t
\mathcal{D}_{\nu_1+1,1,1}(\nu),
\end{align*}
where $\mathcal{D}_{i,0,0}(\nu) - t \mathcal{D}_{i,1,1}(\nu)$ is given by a recurrence of the form \eref{recur-final2}, but with the trivial initial condition 
$\mathcal{D}_{\ihat+1,0,0}(\nu) - t\mathcal{D}_{\ihat+1,1,1}(\nu) = 0$ (by subtracting equation \eref{non-diag-rec4} from \eref{non-diag-rec3}). Since its initial value is zero, this recurrence has the solution $\mathcal{D}_{i,0,0}(\nu) - t \mathcal{D}_{i,1,1}(\nu) = 0$ for all $i > \ihat$, as we were required to show.

\subsubsection{Initial condition}
\label{2-ic}

The case $N=2$ can be computed explicitly without difficulty. Indeed, we find that
\begin{align*}
\mathcal{G}_2 (x_1,x_2)
=
(1-t)
\sum_{k=0}^{\infty}
P_{(k,k)}(x_1,x_2;t)
=
(1-t)
\sum_{k=0}^{\infty}
x_1^k x_2^k
=
\frac{1-t}{1-x_1 x_2}.
\end{align*}

\section{Identities at Macdonald level}
\label{sec:mac}

The identities \eref{HL-ASM}--\eref{HL-UASM} listed at the start of this paper apply at the level of Hall--Littlewood polynomials. Since Hall--Littlewood polynomials are the $q=0$ specialization of Macdonald polynomials, it is natural to suggest that these equations are special cases of yet more general identities involving extra parameters. 

In this section we show that this is indeed the case, by presenting Macdonald analogues of both equations \eref{HL-ASM} and \eref{HL-OSASM}. It turns out that these equations can be generalized by the introduction of two additional parameters, one being the $q$ from Macdonald theory. The Macdonald generalization of \eref{HL-ASM} has been known since the work of Warnaar in \cite{war}, and can be proved using Macdonald difference operators. Although we obtain a completely analogous generalization of \eref{HL-OSASM} to Macdonald level, it remains conjectural, since we lack an appropriate family of difference operators to expedite its proof.

As an aside, we remark that we do not know of an appropriate generalization of \eref{HL-UASM} to Macdonald level, even conjecturally. We are however able to deform it by the introduction of certain additional parameters, but we defer this result to Section \ref{sec:UUASM} since it does not pertain directly to symmetric polynomials at Macdonald level.

\subsection{$u$-deformed Macdonald Cauchy identity}

The following theorem can be deduced by acting on the Macdonald Cauchy identity
\begin{align}
\label{cauchy-mac}
\sum_{\lambda}
b_{\lambda}(q,t)
P_{\lambda}(x_1,\dots,x_n;q,t)
P_{\lambda}(y_1,\dots,y_n;q,t)
=
\prod_{i,j=1}^{n}
\frac{(t x_i y_j; q)}{(x_i y_j ; q)}
\end{align}
with the generating series
\begin{align}
\label{dops-mac}
D_n(u)
=
\sum_{r=0}^{n}
(-u)^r
\sum_{
\substack{
S \subseteq [n] \\ 
|S| = r
}
}
t^{r(r-1)/2}
\prod_{\substack{
i \in S \\ j \not\in S
}}
\frac{tx_i - x_j}{x_i - x_j}
\prod_{i \in S}
T_{q,x_i} 
\end{align}
of Macdonald difference operators \cite{mac}. It was partially discovered in \cite{kn} and discussed again in \cite{war} (see Equations (3.2) and (3.3) therein). The fact that the right hand side is a determinant for all values of the parameter $u$ was not made explicit in \cite{war}, but the procedure presented therein (for the case $u=t$) can be applied {\it mutatis mutandis} when $u$ is generic. For that reason, we attribute this theorem to Warnaar.

\begin{thm}[Warnaar]
\label{mac-cauch-thm}
\begin{multline}
\label{mac-asm}
\sum_{\lambda}
\prod_{i=1}^{n}
(1- u q^{\lambda_i} t^{n-i})
b_{\lambda}(q,t)
P_{\lambda}(x_1,\dots,x_n;q,t)
P_{\lambda}(y_1,\dots,y_n;q,t)
=
\\
\prod_{i,j=1}^{n}
\frac{(t x_i y_j; q)}{(x_i y_j ; q)}
\frac{ \prod_{i,j=1}^{n} (1 - x_i y_j) }
{\prod_{1 \leq i<j \leq n} (x_i - x_j) (y_i - y_j)}
\det_{1\leq i,j \leq n}
\left[
\frac{1-u + (u-t) x_i y_j}{(1-x_i y_j) (1-t x_i y_j)}
\right].
\end{multline}
\end{thm}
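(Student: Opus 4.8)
The plan is to apply the generating series $D_n(u)$ of Macdonald difference operators \eqref{dops-mac}, acting in the variables $\{x_1,\dots,x_n\}$, to both sides of the Macdonald Cauchy identity \eqref{cauchy-mac}, and to read off \eqref{mac-asm} by evaluating the two sides separately. On the left hand side I would invoke the fact that each $P_{\lambda}(x_1,\dots,x_n;q,t)$ is an eigenfunction of the $r$-th Macdonald operator with eigenvalue $e_r(q^{\lambda_1}t^{n-1},\dots,q^{\lambda_n}t^0)$; summing against $(-u)^r$ collapses these elementary symmetric functions into the product $\prod_{i=1}^n(1-u q^{\lambda_i}t^{n-i})$, which is exactly the extra weight appearing on the left of \eqref{mac-asm}. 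This step is standard and poses no difficulty.

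All of the work is on the right hand side. Writing $\Pi=\prod_{i,j=1}^n (tx_iy_j;q)/(x_iy_j;q)$, a short computation with the $q$-Pochhammer recursion $(aq;q)_\infty=(a;q)_\infty/(1-a)$ shows that the shift $T_{q,x_i}$ multiplies $\Pi$ by $\prod_{k=1}^n (1-x_iy_k)/(1-tx_iy_k)$. Hence $\prod_{i\in S}T_{q,x_i}$ multiplies $\Pi$ by $\prod_{i\in S}\prod_{k}(1-x_iy_k)/(1-tx_iy_k)$, and therefore $D_n(u)\,\Pi=\Pi\cdot\Sigma$, where
\[
\Sigma=\sum_{S\subseteq[n]}(-u)^{|S|}\,t^{|S|(|S|-1)/2}\prod_{\substack{i\in S\\ j\notin S}}\frac{tx_i-x_j}{x_i-x_j}\prod_{i\in S}\prod_{k=1}^n\frac{1-x_iy_k}{1-tx_iy_k}.
\]
It then remains to show that $\Sigma$ equals the quotient of the determinant in \eqref{mac-asm} by the prefactor $\prod_{i<j}(x_i-x_j)(y_i-y_j)/\prod_{i,j}(1-x_iy_j)$. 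This closed-form evaluation of the subset sum is the heart of the matter, and is precisely the point left implicit in \cite{war}.

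To carry out that evaluation I would work backwards from the determinant rather than from $\Sigma$. The partial-fraction identity
\[
\frac{1-u+(u-t)x_iy_j}{(1-x_iy_j)(1-tx_iy_j)}=\frac{1}{1-x_iy_j}-\frac{u}{1-tx_iy_j}
\]
splits each row of the matrix into two, so expanding the determinant by multilinearity over the rows produces a sum over subsets $S\subseteq[n]$, with $i\in S$ selecting the second term and contributing a factor $-u$. Each resulting minor is a Cauchy determinant $\det[1/(1-a_iy_j)]$ with $a_i=x_i$ for $i\notin S$ and $a_i=tx_i$ for $i\in S$, which evaluates to $\prod_{i<j}(a_i-a_j)(y_i-y_j)/\prod_{i,j}(1-a_iy_j)$. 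The only genuine bookkeeping is to reorganize $\prod_{i<j}(a_i-a_j)$ into the three cases (both indices in $S$, both out, or mixed): the both-in pairs supply $t^{|S|(|S|-1)/2}$, the mixed pairs reproduce exactly $\prod_{i\in S,\,j\notin S}(tx_i-x_j)/(x_i-x_j)$ after using $(x_i-tx_j)/(x_i-x_j)=(tx_j-x_i)/(x_j-x_i)$, and the denominators rearrange into $\prod_{i\in S}\prod_k(1-x_iy_k)/(1-tx_iy_k)$ times $\prod_{i,j}(1-x_iy_j)^{-1}$. Collecting these factors and summing over $S$ gives precisely $\Sigma$ multiplied by the claimed prefactor. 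The main obstacle is thus not conceptual but this sign- and $t$-power accounting in matching the subset sum to the determinant; once it is in place, equating the two evaluations of $D_n(u)$ applied to \eqref{cauchy-mac} yields \eqref{mac-asm}.
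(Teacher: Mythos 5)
Your proposal is correct, and its first half coincides exactly with the paper's proof: both act with the generating series \eref{dops-mac} on the Cauchy identity \eref{cauchy-mac}, use the eigenvalue property of the Macdonald operators on the left, and the $q$-Pochhammer shift computation on the right, arriving at the same subset sum (the paper's equation \eref{det-id}, reducing the theorem to the identity \eref{thm1-proof}). Where you genuinely diverge is in how \eref{thm1-proof} is established. The paper proves it by Lagrange interpolation in the Izergin--Korepin style: after multiplying both sides by $\prod_{i,j=1}^{n}(1-tx_iy_j)$, it observes that both are polynomials in $x_n$ of degree $n$, symmetric in $\{y_1,\dots,y_n\}$, satisfy the same two recursion relations at $x_n=1/y_n$ and $x_n=1/(ty_n)$, and agree at $n=1$; symmetry then yields agreement at $2n$ values of $x_n$, forcing equality by induction on $n$. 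You instead evaluate the determinant directly: the partial-fraction splitting $\frac{1-u+(u-t)x_iy_j}{(1-x_iy_j)(1-tx_iy_j)}=\frac{1}{1-x_iy_j}-\frac{u}{1-tx_iy_j}$, multilinearity over rows, and the Cauchy determinant formula for each of the $2^n$ resulting minors $\det[1/(1-a_iy_j)]$ with $a_i\in\{x_i,tx_i\}$, followed by the reorganization of $\prod_{i<j}(a_i-a_j)$ into the in-in, out-out and mixed pairs. I checked this bookkeeping and it is right: the in-in pairs give $t^{|S|(|S|-1)/2}$, the mixed pairs give $\prod_{i\in S,\,j\notin S}(tx_i-x_j)/(x_i-x_j)$ over all ordered pairs after the sign flip $(x_i-tx_j)/(x_i-x_j)=(tx_j-x_i)/(x_j-x_i)$, and the denominators recombine as claimed, so the expansion reproduces the subset sum exactly. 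The trade-off is clear: your route is non-inductive and yields \eref{thm1-proof} as a one-stroke closed-form identity of rational functions, making explicit the structure that the paper leaves to interpolation; the paper's route avoids the combinatorial reorganization entirely but needs the recursion machinery and an induction on $n$. Both constitute complete proofs of Theorem \ref{mac-cauch-thm}.
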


\begin{proof}
Acting on both sides of the Cauchy identity \eref{cauchy-mac} with the operator \eref{dops-mac}, one obtains
\begin{multline}
\label{det-id}
\sum_{\lambda}
\prod_{i=1}^{n}
(1- u q^{\lambda_i} t^{n-i})
b_{\lambda}(q,t)
P_{\lambda}(x_1,\dots,x_n;q,t)
P_{\lambda}(y_1,\dots,y_n;q,t)
=
\\
\prod_{i,j=1}^{n}
\frac{(t x_i y_j; q)}{(x_i y_j ; q)}
\sum_{r=0}^{n}
\sum_{\substack{S \subseteq [n] \\  |S| = r}}
(-u)^r t^{r(r-1)/2}
\prod_{\substack{ i \in S \\ j \not\in S }}
\frac{t x_i - x_j}{x_i - x_j}
\prod_{i \in S}
\prod_{j=1}^{n}
\frac{1 - x_i y_j} {1- t x_i y_j}. 
\end{multline}
It thus suffices to show that
\begin{multline}
\label{thm1-proof}
\sum_{r=0}^{n}
\sum_{\substack{S \subseteq [n] \\  |S| = r}}
(-u)^r t^{r(r-1)/2}
\prod_{\substack{ i \in S \\ j \not\in S }}
\frac{t x_i - x_j}{x_i - x_j}
\prod_{i \in S}
\prod_{j=1}^{n}
\frac{1 - x_i y_j} {1- t x_i y_j} 
=
\\
\frac{ \prod_{i,j=1}^{n} (1 - x_i y_j) }
{\prod_{1 \leq i<j \leq n} (x_i - x_j) (y_i - y_j)}
\det_{1\leq i,j \leq n}
\left[
\frac{1-u + (u-t) x_i y_j}{(1-x_i y_j) (1-t x_i y_j)}
\right].
\end{multline}
This can be done using Lagrange interpolation. We let $\mathcal{L}_n$ and $\mathcal{R}_n$ denote the left and right hand sides of \eref{thm1-proof}, having first multiplied this equation by 
$\prod_{i,j=1}^{n} (1-tx_i y_j)$. Both $\mathcal{L}_n$ and $\mathcal{R}_n$ are polynomials in $x_n$ of degree $n$, and manifestly symmetric in the variables $\{y_1,\dots,y_n\}$. We find that $\mathcal{L}_n$ satisfies two simple recursion relations:
\begin{align*}
\mathcal{L}_n
\Big|_{x_n = 1/y_n}
&=
(1-t) 
\prod_{i=1}^{n-1}
(1-t x_i y_n)
\prod_{j=1}^{n-1}
(1-t y_j/y_n)
\times
\prod_{i,j=1}^{n-1}
(1-t x_i y_j)
\times
\\
\sum_{r=0}^{n-1}
\sum_{\substack{S \subseteq [n-1] \\ |S| = r}}
&
(-u)^r t^{r(r-1)/2}
\prod_{\substack{ i \in S \\ j \not\in S }}
\left(
\frac{t x_i - x_j}{x_i - x_j}
\right)
\prod_{i \in S}
\left(
\frac{t x_i - 1/y_n}{x_i - 1/y_n}
\right)
\prod_{i \in S}
\prod_{j=1}^{n-1}
\left(
\frac{1 - x_i y_j} {1- t x_i y_j}
\right)
\prod_{i \in S}
\left(
\frac{1 - x_i y_n} {1- t x_i y_n}
\right)
\\
&=
(1-t) 
\prod_{i=1}^{n-1}
(1-t x_i y_n)
\prod_{j=1}^{n-1}
(1-t y_j/y_n)
\mathcal{L}_{n-1},
\end{align*}
\begin{align*}
\mathcal{L}_n
\Big|_{x_n = 1/(t y_n)}
&=
(1-1/t)
\prod_{i=1}^{n-1}
(1-t x_i y_n)
\prod_{j=1}^{n-1}
(1-y_j/y_n)
\times
\prod_{i,j=1}^{n-1}
(1-t x_i y_j)
\times
\\
\sum_{r=1}^{n}
\sum_{\substack{S \subseteq [n-1] \\ |S| = r-1}}
&
(-u)^{r} t^{r(r-1)/2}
\prod_{\substack{ i \in S \\ j \not\in S }}
\left(
\frac{t x_i - x_j}{x_i - x_j}
\right)
\prod_{j \not\in S}
\left(
\frac{1/y_n - x_j}{1/(ty_n) - x_j}
\right)
\prod_{i \in S}
\prod_{j=1}^{n}
\left(
\frac{1 - x_i y_j} {1- t x_i y_j}
\right)
\prod_{j=1}^{n-1}
\left(
\frac{1 - y_j/(t y_n)} {1- y_j/y_n}
\right)
\\
&=
u
t^{n-1}
(1/t-1)
\prod_{i=1}^{n-1}
(1- x_i y_n)
\prod_{j=1}^{n-1}
(1-y_j/(t y_n))
\mathcal{L}_{n-1}.
\end{align*}
Identical recursion relations are satisfied by $\mathcal{R}_n$. Thanks to the symmetry in $\{y_1,\dots,y_n\}$, these recursion relations prove that $\mathcal{L}_n = \mathcal{R}_n$ at $2n$ values of $x_n$ (more than sufficient for Lagrange interpolation), provided that they agree for $n=1$. It is immediate from their definitions that $\mathcal{L}_1 = 1-u+(u-t) x_1 y_1 = \mathcal{R}_1$.

\end{proof}

\subsection{$u$-deformed Macdonald Littlewood identity}

Throughout this subsection, we let $N=2n$. The following conjecture\footnote{We are grateful to E.~Rains for a comprehensive numerical test of the factorized dependence on $q$ and $u$ in \eref{mac-osasm}, and to O.~Warnaar for independently suggesting this conjecture to us while the manuscript was in preparation.} is motivated by the even columns Littlewood identity
\begin{align}
\sum_{\lambda'\ {\rm even}}
b^{\rm el}_{\lambda}(q,t)
P_{\lambda}(x_1,\dots,x_N;q,t)
=
\prod_{1 \leq i<j \leq N}
\frac{(t x_i x_j;q)}{(x_i x_j;q)} 
\label{little-mac}
\end{align}
for Macdonald polynomials (see Example 4, Section 7, Chapter VI of \cite{mac}). Although we do not have a proof of this conjecture, it is tempting to suggest that it can be deduced by acting on the Littlewood identity 
\eref{little-mac} with an appropriate family of difference operators, in much the same way that Theorem \ref{mac-cauch-thm} follows from the Cauchy identity \eref{cauchy-mac}. A preliminary step in this direction is given in the remark following the conjecture. 

\begin{conj}
\label{mac-conj}
\begin{multline}
\label{mac-osasm}
\sum_{\lambda'\ {\rm even}}
\ 
\prod_{i\ {\rm even}}^{N}
(1-u q^{\lambda_i} t^{N-i} )
b^{\rm el}_{\lambda}(q,t)
P_{\lambda}(x_1,\dots,x_{N};q,t)
=
\\
\prod_{1 \leq i<j \leq N}
\frac{(t x_i x_j;q)}{(x_i x_j;q)}
\prod_{1 \leq i<j \leq N}
\frac{(1-x_i x_j)}{(x_i - x_j)}
\pf_{1\leq i < j \leq N}
\left[
\frac{(x_i - x_j) (1 - u + (u-t) x_i x_j)}
{(1-x_i x_j) (1-t x_i x_j)}
\right].
\end{multline}
\end{conj}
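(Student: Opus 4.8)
The natural strategy is to mimic the derivation of Theorem~\ref{mac-cauch-thm}: find a generating series of difference operators whose action on the even-columns Littlewood identity \eref{little-mac} produces the left hand side of \eref{mac-osasm}, then evaluate the resulting sum over subsets in closed form and recognize it as the Pfaffian on the right. The prefactor $\prod_{i\ {\rm even}}(1-uq^{\lambda_i}t^{N-i})$ on the left suggests that the correct operator is a ``Pfaffian-type'' or half-sized analogue of $D_N(u)$ in \eref{dops-mac}, one that selects only even-indexed spectral parameters $q^{\lambda_i}t^{N-i}$; equivalently, it should be built from the difference operators adapted to the symmetric-pair (Littlewood) setting rather than the Cauchy setting. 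I would first try to guess this operator by demanding that, acting on $P_\lambda$, it reproduce exactly $\prod_{i\ {\rm even}}^{N}(1-uq^{\lambda_i}t^{N-i})$ as its eigenvalue, using the known diagonal action of Macdonald operators on the $P_\lambda$ basis.

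Assuming such an operator exists and acts as a weighted sum over subsets $S\subseteq[N]$, the computation then reduces—exactly as \eref{det-id} reduces to \eref{thm1-proof}—to a purely rational identity: a sum over subsets $S$ of products of the factors $\frac{tx_i-x_j}{x_i-x_j}$ (from the operator) times $\prod_{i\in S,\,j}\frac{1-x_ix_j}{1-tx_ix_j}$ (from differentiating the Littlewood product), which must equal the stated Pfaffian times the prefactor $\prod_{i<j}\frac{1-x_ix_j}{x_i-x_j}$. To prove this rational identity I would follow the same Lagrange/Pfaffian-interpolation route used for \eref{thm1-proof}: show both sides are polynomials in $x_N$ of the same (low) degree after clearing the denominator $\prod_{i<j}(1-tx_ix_j)$, that they are antisymmetric (or symmetric after the Vandermonde-type prefactor is stripped) in the $x_i$, and that they satisfy matching recursion relations upon the specializations $x_N=1/x_{N-1}$ and $x_N=1/(tx_{N-1})$, reducing from size $N$ to size $N-2$. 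Two such independent specializations plus the degree bound should pin down the Pfaffian uniquely, with the base case $N=2$ checked directly—namely $\mathcal{G}_2$-type evaluation giving the single entry $\frac{(x_1-x_2)(1-u+(u-t)x_1x_2)}{(1-x_1x_2)(1-tx_1x_2)}$.

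\emph{The main obstacle.} The genuinely hard step is the very first one: the remark after Conjecture~\ref{mac-conj} itself concedes that no suitable family of difference operators is known in the Littlewood/even-columns setting, which is precisely why the statement remains a conjecture rather than a theorem. Unlike the Cauchy case, where $D_N(u)$ is a standard Macdonald operator with a clean diagonal action, here one needs an operator compatible with the restriction to even-columned $\lambda$ and producing only the even-indexed eigenvalue factors; constructing it (or an equivalent Pieri/creation-operator identity) is the crux, and it is exactly the ingredient that the authors lack. A fallback plan would be to bypass operators entirely and attempt a direct Izergin--Korepin argument in the spirit of Theorem~\ref{littlewood-thm}: establish symmetry, polynomiality in $x_N$ of the correct degree, the recursion under $x_N=1/(tx_{N-1})$, and an initial condition for the $u$-deformed left hand side, then verify that the Pfaffian Ansatz satisfies the same four properties. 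The difficulty there is that the $u$-deformation spoils the clean telescoping of the coefficients $\mathcal{D}_{i,\delta_i}(\nu)$ in Section~\ref{2-rec}; one would have to track an extra $u$-dependent weight through the recurrences \eref{recur-3}--\eref{recur-4}, and it is not clear a priori that the resulting $u$-twisted sums still collapse to a single monomial in $t$ and $u$, which is likely why a proof has remained elusive.
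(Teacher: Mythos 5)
The statement you were asked to prove is Conjecture \ref{mac-conj}: the paper itself has no proof of it, so there is no ``paper proof'' to compare against, only the partial progress the authors record. Measured against that, your plan coincides almost exactly with the route the authors themselves advocate and leave incomplete. Your first step --- reduce \eref{mac-osasm}, via a hypothetical family of difference operators acting on the Littlewood identity \eref{little-mac}, to a closed rational identity expressing a weighted sum over subsets of $\{1,\dots,N\}$ as the Pfaffian --- is precisely the paper's stated strategy, and the rational identity you would then need is exactly the Lemma containing \eref{pfaff-id}. The paper does prove that Lemma, by the very method you sketch: multiply by $\prod_{1\leq i<j\leq N}(1-tx_ix_j)$, note both sides are symmetric polynomials in $x_N$ of degree $N-1$, match the two specializations $x_N=\b{x}_{N-1}$ and $x_N=\b{x}_{N-1}/t$ (each reducing $N\to N-2$), and check the base case $N=2$. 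Your diagnosis of the crux is also the authors': no operator whose action on \eref{little-mac} produces the subset sum in \eref{pfaff-id} (equivalently, whose eigenvalue on the even-columned $P_\lambda$ is $\prod_{i\ \mathrm{even}}(1-uq^{\lambda_i}t^{N-i})$) is known, and that missing ingredient is exactly why \eref{mac-osasm} remains a conjecture. So your proposal is correctly self-assessed: it is a plan with a genuine, named gap, the same gap the authors face.

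One correction to your fallback assessment. You worry that in a direct Izergin--Korepin attack the $u$-deformation ``spoils the clean telescoping'' of the coefficients $\mathcal{D}_{i,\delta_i}(\nu)$, and that the $u$-twisted sums may fail to collapse. At $q=0$ this worry is unfounded: Theorem \ref{u-def-little-hl} carries out exactly this $u$-twisted recursion and shows that the recurrences \eref{recur-3}--\eref{recur-4} hold without any alteration, with only the initial conditions acquiring factors of the form $\prod_{k\ \mathrm{even}}(1-ut^{k-2})$; the sums collapse cleanly and the $q=0$ specialization of the conjecture, equation \eref{OSASM-cor2}, is thereby a theorem (as is the $q=t$ Schur case, Theorem \ref{u-def-little-s}, via a Pfaffian Cauchy--Binet argument). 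What genuinely blocks the Izergin--Korepin route at Macdonald level is not $u$ but $q$: the machinery of Sections \ref{proof-ASM} and \ref{proof-OSASM} rests on the Hall--Littlewood branching rule \eref{branch}, whose coefficients $\psi_{\lambda/\mu}(t)$ are simple products over columns, whereas the Macdonald branching and Pieri coefficients are ratios of products in $q$ and $t$ that destroy the multiplicative structure on which the telescoping of the partial coefficients depends. So the honest conclusion stands: either the missing difference operators or a genuinely new $q$-deformed recursion argument is required, and neither you nor the authors have it.
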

\begin{rmk} 
One can express the Pfaffian on the right hand side of \eref{mac-osasm} as a sum over subsets of 
$\{1,\dots,N\}$, as in the following lemma.
\end{rmk}  
\begin{lem}
\begin{multline}
\label{pfaff-id}
\sum_{r=0}^{n}
(-u)^r
\sum_{
\substack{
S \subseteq [N] \\ 
|S| = 2r
}
}
t^{r(r-1)}
\prod_{\substack{
i \in S \\ j \not\in S
}}
\frac{1 - x_i x_j}{x_i - x_j}
\prod_{\substack{i<j \\ i,j \in S}}
\frac{1-x_i x_j}{1-t x_i x_j}
=
\\
\prod_{1 \leq i<j \leq N}
\frac{(1-x_i x_j)}{(x_i - x_j)}
\pf_{1\leq i < j \leq N}
\left[
\frac{(x_i - x_j) (1 - u + (u-t) x_i x_j)}
{(1-x_i x_j) (1-t x_i x_j)}
\right].
\end{multline}
\end{lem}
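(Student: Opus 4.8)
The plan is to prove \eref{pfaff-id} by recognizing the Pfaffian on the right-hand side as the Pfaffian of a \emph{sum} of two antisymmetric matrices, expanding it into complementary sub-Pfaffians, and evaluating each piece through the Schur Pfaffian identity. The starting observation is an additive decomposition of the matrix entry: writing $M_{ij}$ for the entry of the Pfaffian in \eref{pfaff-id}, a one-line computation gives
\[
M_{ij}
=
\frac{(x_i-x_j)(1-u+(u-t)x_ix_j)}{(1-x_ix_j)(1-tx_ix_j)}
=
\frac{x_i-x_j}{1-x_ix_j}
-
u\,\frac{x_i-x_j}{1-tx_ix_j},
\]
so that $M=A+uB$ with $A_{ij}=\frac{x_i-x_j}{1-x_ix_j}$ and $B_{ij}=-\frac{x_i-x_j}{1-tx_ix_j}$, both antisymmetric. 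This is the Pfaffian counterpart of the additive structure in $u$ that drives the determinant identity \eref{thm1-proof}.

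Next I would apply the standard expansion of the Pfaffian of a sum of two antisymmetric matrices,
\[
\pf(A+uB)
=
\sum_{r=0}^{n}
u^r
\sum_{\substack{S\subseteq[N]\\ |S|=2r}}
\varepsilon(S)\,
\pf(B_S)\,
\pf(A_{\b{S}}),
\]
where $B_S$ is the principal sub-Pfaffian on the rows and columns indexed by $S$, $A_{\b{S}}$ that on the complement $\b{S}$, and $\varepsilon(S)$ is the shuffle sign of placing the indices of $S$ before those of $\b{S}$. Assigning the $u$-part $B$ to $S$ automatically attaches $u^{|S|/2}=u^r$ to the $|S|=2r$ term, matching the powers of $u$ on the left of \eref{pfaff-id}.

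I would then evaluate both sub-Pfaffians with Schur's Pfaffian identity $\pf_{i<j\in T}\!\left[\frac{x_i-x_j}{1-x_ix_j}\right]=\prod_{i<j\in T}\frac{x_i-x_j}{1-x_ix_j}$. This gives $\pf(A_{\b{S}})=\prod_{i<j\in \b{S}}\frac{x_i-x_j}{1-x_ix_j}$ directly; for $B_S$, the rescaling $x_i\mapsto t^{1/2}x_i$ turns $\frac{x_i-x_j}{1-tx_ix_j}$ into the Schur kernel, and since the $2r\times2r$ Pfaffian is homogeneous of degree $r$ in its entries the rescaling produces exactly the factor $t^{r(r-1)}$, so that $u^r\pf(B_S)=(-u)^r t^{r(r-1)}\prod_{i<j\in S}\frac{x_i-x_j}{1-tx_ix_j}$. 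Multiplying by the prefactor $\prod_{1\le i<j\le N}\frac{1-x_ix_j}{x_i-x_j}$, the $\b{S}$-factors cancel $\pf(A_{\b{S}})$, the $S$-factors combine with $\pf(B_S)$ into $\prod_{i<j\in S}\frac{1-x_ix_j}{1-tx_ix_j}$, and the remaining mixed factors assemble into $\prod_{i\in S,\,j\notin S}\frac{1-x_ix_j}{x_i-x_j}$. Collecting the $(-u)^r t^{r(r-1)}$ coefficients reproduces the left-hand side term by term.

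The main obstacle I anticipate is the reconciliation of signs. The mixed product on the right of \eref{pfaff-id} runs over ordered pairs $i\in S,\ j\notin S$ with no constraint $i<j$, whereas the prefactor and Schur's identity naturally generate products over pairs with $i<j$; the two differ by $(-1)^{c(S)}$, where $c(S)$ counts the pairs whose element of $S$ carries the larger index. The crux is to check that this sign equals the shuffle sign $\varepsilon(S)$ coming from the Pfaffian expansion, so that all signs cancel uniformly. A secondary point is to record the trigonometric form of Schur's Pfaffian identity in the form used (which follows from the classical $\pf\!\left[\frac{x_i-x_j}{x_i+x_j}\right]=\prod\frac{x_i-x_j}{x_i+x_j}$ by a Möbius-type substitution) and to track the homogeneity degree yielding $t^{r(r-1)}$. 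As a consistency check I would verify the $N=2$ case, where both sides collapse to $\frac{1-u+(u-t)x_1x_2}{1-tx_1x_2}$; an entirely parallel Lagrange interpolation argument in $x_N$, mirroring the proof of \eref{thm1-proof}, is available as a fallback should the sign bookkeeping prove unwieldy.
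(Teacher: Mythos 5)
Your proposal is correct, but it takes a genuinely different route from the paper. The paper proves the lemma by the same Izergin--Korepin/Lagrange interpolation scheme it uses for Theorem \ref{mac-cauch-thm}: after multiplying both sides of \eref{pfaff-id} by $\prod_{1\leq i<j\leq N}(1-tx_ix_j)$, it shows both sides are polynomials of degree $N-1$ in $x_N$, symmetric in $\{x_1,\dots,x_N\}$, satisfying the same two recursion relations at $x_N=\b{x}_{N-1}$ and $x_N=\b{x}_{N-1}/t$, and agreeing at $N=2$; symmetry then supplies $2N-2$ interpolation points and uniqueness does the rest. You instead split the Pfaffian entry additively as $\frac{x_i-x_j}{1-x_ix_j}-u\,\frac{x_i-x_j}{1-tx_ix_j}$, apply the minor-summation (Pfaffian-of-a-sum) formula, and evaluate each block by the Schur-type Pfaffian $\pf\bigl[\frac{x_i-x_j}{1-x_ix_j}\bigr]=\prod_{i<j}\frac{x_i-x_j}{1-x_ix_j}$, the rescaling $x\mapsto\sqrt{t}\,x$ correctly producing $t^{r(r-1)}$ on a $2r\times 2r$ block. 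The sign reconciliation you flag as the main obstacle does close, and easily: the shuffle sign $\varepsilon(S)$ is by definition $(-1)$ raised to the number of inversions between the blocks $S$ and $\b{S}$, which is exactly $c(S)=\#\{(i,j):i\in S,\ j\notin S,\ i>j\}=\sum_{s\in S}s-r(2r+1)$, i.e. precisely the sign by which the unordered mixed product $\prod_{i\in S,\,j\notin S}\frac{1-x_ix_j}{x_i-x_j}$ differs from its ordered ($i<j$) counterpart; so all signs cancel term by term. What your route buys is a direct, constructive identification of the subset sum with the Pfaffian expansion --- it explains where the subsets $S$ and the weights $(-u)^rt^{r(r-1)}$ come from, resting only on two classical facts --- whereas the paper's route buys methodological uniformity with the determinant analogue \eref{thm1-proof} at the cost of being an indirect uniqueness argument.
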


\begin{proof}
The proof proceeds along analogous lines to the proof of Theorem \ref{mac-cauch-thm}. We let $\mathcal{L}_N$ and $\mathcal{R}_N$ denote the left and right hand sides of \eref{pfaff-id}, after it is multiplied by 
$\prod_{1\leq i<j \leq N}(1-t x_i x_j)$. Both $\mathcal{L}_N$ and $\mathcal{R}_N$ are polynomials in $x_{N}$ of degree $N-1$, and symmetric in the set of variables $\{x_1,\dots,x_{N}\}$. $\mathcal{L}_{N}$ satisfies the following two recursion relations:
\begin{align*}
&
\mathcal{L}_{N}
\Big|_{x_{N} = \b{x}_{N-1}}
\\
&=
(1-t)
\prod_{i=1}^{N-2}
(1-tx_i x_{N-1})
(1-tx_i \b{x}_{N-1})
\sum_{r=0}^{n-1}
\sum_{\substack{S \subseteq [N-2] \\ |S| = 2r}}
(-u)^r
t^{r(r-1)}
\prod_{\substack{
i \in S \\ j \not\in S
}}
\left(
\frac{1 - x_i x_j}{x_i - x_j}
\right)
\prod_{\substack{i<j \\ i,j \in S}}
\left(
\frac{1-x_i x_j}{1-t x_i x_j}
\right)
\\
&=
(1-t)
\prod_{i=1}^{N-2}
(1-tx_i x_{N-1})
(1-tx_i \b{x}_{N-1})
\mathcal{L}_{N-2}, 
\end{align*}

\begin{align*}
\mathcal{L}_{N}
\Big|_{x_{N} = \b{x}_{N-1}/t}
=&
(1-1/t)
\prod_{i=1}^{N-2}
(1-tx_i x_{N-1})
(1-x_i \b{x}_{N-1})
\sum_{r=1}^{n}
\sum_{\substack{S \subseteq [N-2] \\ |S| = 2r-2}}
(-u)^r t^{r(r-1)}
\prod_{\substack{i \in S \\ j \not\in S}}
\left(
\frac{1-x_i x_j}{x_i-x_j}
\right)
\times
\\
\prod_{j\not\in S}
\left(
\frac{1-x_{N-1} x_j}{x_{N-1}-x_j}
\right)
&
\left(
\frac{1-\b{x}_{N-1} x_j/t}{\b{x}_{N-1}/t-x_j}
\right)
\prod_{1\leq i<j \leq N-2}
\left(
\frac{1-x_i x_j}{1-t x_i x_j}
\right)
\prod_{i \in S}
\left(
\frac{1-x_i x_{N-1}}{1-t x_i x_{N-1}}
\right)
\left(
\frac{1-x_i \b{x}_{N-1}/t}{1-x_i \b{x}_{N-1}}
\right)
\\
=&
-u
t^{N-2}
(1-1/t)
\prod_{i=1}^{N-2}
(1- x_i x_{N-1})
(1- x_i \b{x}_{N-1}/t)
\mathcal{L}_{N-2}.
\end{align*}
It is straightforward to show that both of these recursion relations are satisfied by 
$\mathcal{R}_{N}$. Due to the symmetry in $\{x_1,\dots,x_{N}\}$, these recursion relations prove that 
$\mathcal{L}_{N} = \mathcal{R}_{N}$ at $2N-2$ points, as long as they agree for $N=2$. It clear from their definitions that $\mathcal{L}_2 = (1-t x_1 x_2) - u (1-x_1 x_2) = \mathcal{R}_2$.

\end{proof}

The expression of the Pfaffian appearing on the right hand side of \eref{mac-osasm} as a sum over subsets of $\{1,\dots,N\}$, as achieved by equation \eref{pfaff-id}, would seem to be an important step towards the proof of \eref{mac-osasm}. Indeed, the analogous result \eref{det-id} was crucial in the proof of \eref{mac-asm}, since the type of sum arising in that case was manifestly related to the generating series 
\eref{dops-mac} of difference operators. 

Nevertheless, we do not yet know of a family of operators whose action on the right hand side of the Littlewood identity \eref{little-mac} produces the sum in \eref{pfaff-id}. The discovery of such operators would not only lead to the completed proof of \eref{mac-osasm}, but would constitute an important development in the theory of Macdonald polynomials in its own right.

\section{$u$-deformed Cauchy identity and half-turn symmetric alternating sign matrices}
\label{sec:HTASM}

The aim of this section is to study the $q=0$ specialization of equation \eref{mac-asm}, and its relation with the six-vertex model. In particular we will study the six-vertex model on a lattice with {\it half-turn symmetry}, and calculate its partition function as a product of two determinants following Kuperberg \cite{kup2}. One of the determinants is precisely the domain wall partition function (the right hand side of \eref{HL-ASM}), while the remaining determinant is equal to the right hand side of \eref{mac-asm} with $q=0$ and 
$u=-\sqrt{t}$.

\subsection{Six-vertex model in the bulk}

We begin with some preliminary material on the six-vertex model. We consider lattices formed by the intersection of horizontal and vertical lines. The points of intersection form vertices, and each of the four edges surrounding a vertex is assigned an arrow configuration, such that exactly two arrows point towards/away from the point of intersection. This gives rise to six legal vertices, which are illustrated in Figure 
\ref{fig:6vertices}.


\begin{figure}[H]
\begin{tabular}{ccc}
\begin{tikzpicture}[scale=0.6]
\draw[thick, smooth] (-1,0) -- (1,0);
\node at (-0.5,0) {$\r$}; \node at (0.5,0) {$\r$};
\node[label={left: \fs ${\color{red} \shortrightarrow} \ x$}] at (-1,0) {};
\draw[thick, smooth] (0,-1) -- (0,1);
\node at (0,-0.5) {$\u$}; \node at (0,0.5) {$\u$};
\node[label={below: \fs ${\color{red} \begin{array}{c} {\color{black} y} \\ \shortuparrow \end{array} }$}] at (0,-1) {};
\end{tikzpicture}
\quad\quad\quad&
\begin{tikzpicture}[scale=0.6]
\draw[thick, smooth] (-1,0) -- (1,0);
\node at (-0.5,0) {$\r$}; \node at (0.5,0) {$\r$};
\node[label={left: \fs ${\color{red} \shortrightarrow} \ x$}] at (-1,0) {};
\draw[thick, smooth] (0,-1) -- (0,1);
\node at (0,-0.5) {$\d$}; \node at (0,0.5) {$\d$};
\node[label={below: \fs ${\color{red} \begin{array}{c} {\color{black} y} \\ \shortuparrow \end{array} }$}] at (0,-1) {};
\end{tikzpicture}
\quad\quad\quad&
\begin{tikzpicture}[scale=0.6]
\draw[thick, smooth] (-1,0) -- (1,0);
\node at (-0.5,0) {$\r$}; \node at (0.5,0) {$\l$};
\node[label={left: \fs ${\color{red} \shortrightarrow} \ x$}] at (-1,0) {};
\draw[thick, smooth] (0,-1) -- (0,1);
\node at (0,-0.5) {$\d$}; \node at (0,0.5) {$\u$};
\node[label={below: \fs ${\color{red} \begin{array}{c} {\color{black} y} \\ \shortuparrow \end{array} }$}] at (0,-1) {};
\end{tikzpicture}
\\
\quad
$a_{+}(x,y)$
\quad\quad\quad&
\quad
$b_{+}(x,y)$
\quad\quad\quad&
\quad
$c_{+}(x,y)$
\\
\\
\begin{tikzpicture}[scale=0.6]
\draw[thick, smooth] (-1,0) -- (1,0);
\node at (-0.5,0) {$\l$}; \node at (0.5,0) {$\l$};
\node[label={left: \fs ${\color{red} \shortrightarrow} \ x$}] at (-1,0) {};
\draw[thick, smooth] (0,-1) -- (0,1);
\node at (0,-0.5) {$\d$}; \node at (0,0.5) {$\d$};
\node[label={below: \fs ${\color{red} \begin{array}{c} {\color{black} y} \\ \shortuparrow \end{array} }$}] at (0,-1) {};
\end{tikzpicture}
\quad\quad\quad&
\begin{tikzpicture}[scale=0.6]
\draw[thick, smooth] (-1,0) -- (1,0);
\node at (-0.5,0) {$\l$}; \node at (0.5,0) {$\l$};
\node[label={left: \fs ${\color{red} \shortrightarrow} \ x$}] at (-1,0) {};
\draw[thick, smooth] (0,-1) -- (0,1);
\node at (0,-0.5) {$\u$}; \node at (0,0.5) {$\u$};
\node[label={below: \fs ${\color{red} \begin{array}{c} {\color{black} y} \\ \shortuparrow \end{array} }$}] at (0,-1) {};
\end{tikzpicture}
\quad\quad\quad&
\begin{tikzpicture}[scale=0.6]
\draw[thick, smooth] (-1,0) -- (1,0);
\node at (-0.5,0) {$\l$}; \node at (0.5,0) {$\r$};
\node[label={left: \fs ${\color{red} \shortrightarrow} \ x$}] at (-1,0) {};
\draw[thick, smooth] (0,-1) -- (0,1);
\node at (0,-0.5) {$\u$}; \node at (0,0.5) {$\d$};
\node[label={below: \fs ${\color{red} \begin{array}{c} {\color{black} y} \\ \shortuparrow \end{array} }$}] at (0,-1) {};
\end{tikzpicture}
\\
\quad
$a_{-}(x,y)$
\quad\quad\quad &
\quad
$b_{-}(x,y)$
\quad\quad\quad &
\quad
$c_{-}(x,y)$
\end{tabular}
\caption{The vertices of the six-vertex model, with Boltzmann weights indicated beneath. The small red arrows indicate the orientation of the lines. In order to distinguish between $a$ and $b$ type vertices, and also between $c_{+}$ and $c_{-}$ vertices, the correct convention is to view every vertex such that its lines are oriented from south-west to north-east.}
\label{fig:6vertices}
\end{figure}
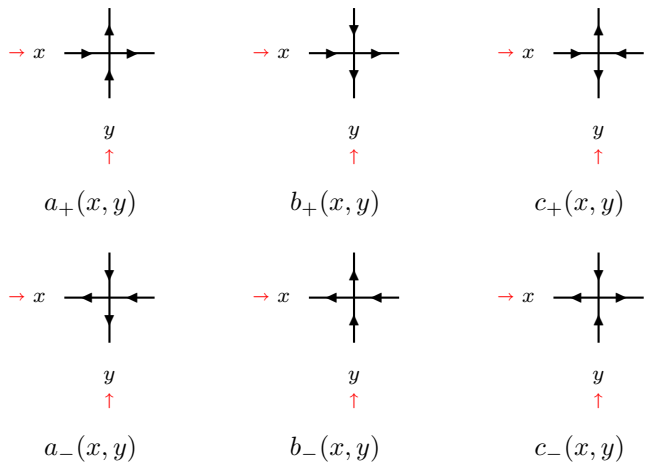
To each horizontal (respectively, vertical) line of the lattice one associates an orientation and a variable $x_i$ (respectively, $y_j$), called a {\it rapidity}. The six types of vertex are assigned Boltzmann weights, which are rational functions depending on the ratio 
$x/y$ of the rapidities incident on the vertex:
\begin{align}
\label{boltz}
a_{\pm}(x,y)
=
\frac{1-t x/y}{1-x/y},
\quad\quad
b_{\pm}(x,y)
=
\sqrt{t},
\quad\quad
c_{+}(x,y)
=
\frac{(1-t)}{1-x/y},
\quad\quad
c_{-}(x,y)
=
\frac{(1-t) x/y}{1-x/y}.
\end{align}
Note that in order to correctly determine the Boltzmann weight of a vertex it is necessary to place the vertex in some canonical orientation, which means rotating the vertex such that the orientation of its lines are from left to right and bottom to top. The crucial feature of the Boltzmann weights thus chosen is that they satisfy the Yang--Baxter equation, shown in Figure \ref{fig:YB}. The Yang--Baxter equation plays an essential role in the partition functions to be studied, since it ensures that these functions are symmetric in their rapidity variables. Without this fact, it would clearly not be possible to expand these objects with respect to Hall--Littlewood polynomials. 
\begin{figure}[H]
\begin{tikzpicture}[scale=0.8]
\draw[thick,smooth] (0,1.5) arc (-155:-90:2);
\draw[thick,smooth] (0,0) arc (155:90:2);
\draw[thick, smooth] 
({2*cos(25)},{1.5-(2-2*sin(25))})--({2*cos(25)+0.5},{1.5-(2-2*sin(25))});
\draw[thick, smooth] ({2*cos(25)},{(2-2*sin(25))})--({2*cos(25)+0.5},{(2-2*sin(25))});
\draw[thick,smooth] (1.3,-0.25)--(1.3,1.75);
\node at (5,0.75) {$=$};
\draw[thick,smooth] (10,1.5) arc (-25:-90:2);
\draw[thick,smooth] (10,0) arc (25:90:2);
\draw[thick, smooth] 
({10-2*cos(25)},{1.5-(2-2*sin(25))})--({10-2*cos(25)-0.5},{1.5-(2-2*sin(25))});
\draw[thick, smooth] ({10-2*cos(25)},{(2-2*sin(25))})--({10-2*cos(25)-0.5},{(2-2*sin(25))});
\draw[thick,smooth] (8.7,-0.25)--(8.7,1.75);

\node[left,rotate=295] at (0,1.5) {\fs ${\color{red} \shortrightarrow} \ x$};
\node[left,rotate=-295] at (0,0) {\fs ${\color{red} \shortrightarrow} \ y$};

\node[label={below: \fs ${\color{red} \begin{array}{c}  {\color{black} z} \\ \shortuparrow \end{array} }$}] at (1.3,-0.25) {};

\node[label={left: \fs ${\color{red} \shortrightarrow} \ y$}] at 
({10-2*cos(25)-0.5},{1.5-(2-2*sin(25))}) {};
\node[label={left: \fs ${\color{red} \shortrightarrow} \ x$}] at 
({10-2*cos(25)-0.5},{(2-2*sin(25))}) {};
\node[label={below: \fs ${\color{red} \begin{array}{c} {\color{black} z} \\ \shortuparrow \end{array} }$}] at (8.7,-0.25) {};
\end{tikzpicture}
\caption{The Yang--Baxter equation. One makes a definite choice for the arrows on the six external edges (which is consistent and fixed on both sides of the equation) and sums over the possible arrow configurations of the three internal edges. In this way, the figure actually implies $2^6$ equations involving the Boltzmann weights \eref{boltz}.}
\label{fig:YB} 
\end{figure}
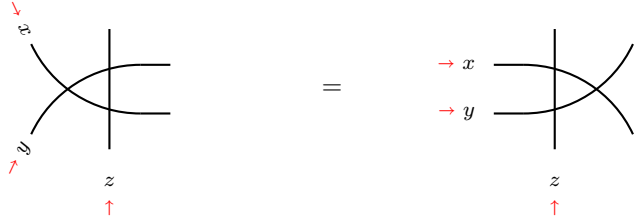

\subsection{Partition function on half-turn symmetric lattice}

We turn our attention to the six-vertex model under domain wall boundary conditions, with half-turn (or 180\degree\ rotational) symmetry imposed -- see Figure \ref{fig:HT}. Configurations on this lattice are in one-to-one correspondence with half-turn symmetric alternating sign matrices \cite{kup2}.


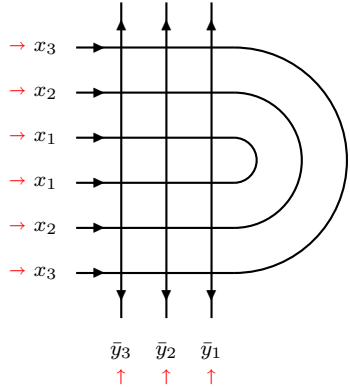
\begin{figure}[H]
\begin{tikzpicture}[scale=0.6]

\foreach\x in {1,...,6}
\draw[thick]
(0,\x) -- (3.5,\x);

\foreach\x in {1,...,3}
{\node[label={left: \fs ${\color{red} \shortrightarrow} \ x_{\x}$}] at (0,4-\x) {};
\node[label={left: \fs ${\color{red} \shortrightarrow} \ x_{\x}$}] at (0,3+\x) {};}

\draw[thick, smooth] (3.5,3) arc (-90:90:0.5);
\draw[thick, smooth] (3.5,2) arc (-90:90:1.5);
\draw[thick, smooth] (3.5,1) arc (-90:90:2.5);


\foreach\y in {1,...,6}
\node at (0.5,\y) {\r};

\foreach\x in {1,...,3}
\draw[thick]
(\x,0) -- (\x,7);

\foreach\x in {1,...,3}
\node[label={below: \fs ${\color{red} \begin{array}{c} {\color{black} \b{y}_{\x}} \\ \shortuparrow \end{array} }$}] at (4-\x,0) {};


\foreach\x in {1,...,3}
\node at (\x,6.5) {\u};

\foreach\x in {1,...,3}
\node at (\x,0.5) {\d};

\end{tikzpicture}
\caption{The partition function $Z_{\rm HT}$ of the six-vertex model with half-turn symmetric boundary conditions, in the case $n=3$. The semi-circular lines on the right side of the lattice indicate that the arrow on the final edge of the $i$-th horizontal line is paired (in a continuous fashion) with that of the $(2n-i+1)$-th horizontal line.}
\label{fig:HT} 
\end{figure}

\begin{lem}
\label{lem:HT-prop}
The partition function 
$Z^{(n)}_{\rm HT} = Z_{\rm HT}(x_1,\dots,x_n;y_1,\dots,y_n;t)$ 
as defined in Figure \ref{fig:HT} satisfies four properties:

\begin{enumerate}[label=\bf\arabic*.]

\item Multiplying by $\prod_{i,j=1}^{n} (1-x_i y_j)^2$, it is a polynomial in $x_n$ of degree $2n-1$.

\item It is symmetric in $\{y_1,\dots,y_n\}$.

\item It obeys the recursion relations
\begin{align}
\label{HT-rec1}
Z^{(n)}_{\rm HT}
\Big|_{x_n = \b{y}_n/t}
&=
-t^{2n-1/2}
Z^{(n-1)}_{\rm HT},
\\
\label{HT-rec2}
\lim_{x_n \rightarrow \b{y}_n}
\Big(
(1-x_n y_n)^2
Z^{(n)}_{\rm HT}
\Big)
&= 
(1-t)^2
\prod_{i=1}^{n-1}
\frac{(1-ty_i \b{y}_n)^2}{(1-y_i \b{y}_n)^2}
\frac{(1-tx_i y_n)^2}{(1-x_i y_n)^2}
Z^{(n-1)}_{\rm HT}.
\end{align}

\item When $n=1$, it is given explicitly by
\begin{align*}
Z^{(1)}_{\rm HT}
=
\frac{(1-t)(1+\sqrt{t})(1-\sqrt{t} x_1 y_1)}
{(1-x_1 y_1)^2}.
\end{align*}

\end{enumerate}
\end{lem}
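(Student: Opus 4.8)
The plan is to apply the Izergin--Korepin machinery directly to the lattice of Figure~\ref{fig:HT}, deriving the four properties from the Yang--Baxter equation, the explicit rational form \eref{boltz} of the Boltzmann weights, and freezing arguments at distinguished values of the rapidities. I would begin with the symmetry in $\{y_1,\dots,y_n\}$ (property~\textbf{2}). Since the vertical lines carry $\b{y}_1,\dots,\b{y}_n$, I would insert the intertwining vertex of Figure~\ref{fig:YB} between two adjacent columns carrying $\b{y}_j$ and $\b{y}_{j+1}$ and slide it through the entire lattice from bottom to top using the Yang--Baxter equation. At the bottom and top of the vertical lines the external arrows are frozen (pointing down, respectively up, in the figure), so the intertwiner acts there as a scalar and can be absorbed, while the half-turn pairing on the right couples only the horizontal rapidities and is therefore left untouched; the net effect is the transposition $\b{y}_j \leftrightarrow \b{y}_{j+1}$, giving the claimed symmetry.

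For polynomiality (property~\textbf{1}), I would use that the rapidity $x_n$ labels the two half-turn-paired horizontal lines (lines $1$ and $2n$, joined by the outermost semicircle), each of which crosses all $n$ vertical lines. The only $x_n$-dependence resides in the $2n$ Boltzmann weights on these two lines, and each weight in \eref{boltz}, once multiplied by its denominator $(1-x_n y_j)$, is a polynomial of degree at most one in $x_n$ (degree one for $a_\pm$ and $c_-$, degree zero for $b_\pm$ and $c_+$). Hence $\prod_{i,j=1}^n (1-x_i y_j)^2\, Z^{(n)}_{\rm HT}$ is polynomial in $x_n$, and a degree count on the two paired lines --- using arrow conservation and the boundary data, with the half-turn pairing of the rightmost edges removing exactly one unit --- should yield the bound $2n-1$.

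The recursions (property~\textbf{3}) come from freezing, and the base case (property~\textbf{4}) from direct enumeration. At $x_n = \b{y}_n/t$ one has $x_n y_n = 1/t$, so the weight $a_\pm$ at each intersection of an $x_n$-line with the $y_n$-column vanishes; arrow conservation together with the domain-wall and half-turn boundary then forces a frozen configuration along both $x_n$-lines and the $y_n$-column, factoring out an explicit product of weights (with the $b_\pm=\sqrt{t}$ factors accounting for the half-integer power of $t$) and leaving $Z^{(n-1)}_{\rm HT}$ with prefactor $-t^{2n-1/2}$, which is \eref{HT-rec1}. For \eref{HT-rec2}, as $x_n \to \b{y}_n$ the weights $a_\pm, c_\pm$ develop simple poles at the two vertices where the $x_n$-lines meet the $y_n$-column, so $Z^{(n)}_{\rm HT}$ acquires a double pole; multiplying by $(1-x_n y_n)^2$ and taking the limit isolates the residue, which freezes the complementary configuration and, after collecting the surviving weights, produces $(1-t)^2 \prod_{i=1}^{n-1} \frac{(1-t y_i \b{y}_n)^2}{(1-y_i \b{y}_n)^2}\frac{(1-t x_i y_n)^2}{(1-x_i y_n)^2}\, Z^{(n-1)}_{\rm HT}$. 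Finally, for $n=1$ the lattice reduces to two horizontal lines joined on the right and a single column, and summing the weights of the admissible configurations directly gives the stated rational function.

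The main obstacle I expect is the bookkeeping of the freezing arguments in the half-turn geometry: because each horizontal rapidity labels two lines glued at the right, the vanishing (resp. divergence) of a single vertex weight propagates simultaneously through both copies, and one must track carefully which vertices freeze and assemble the exact scalar prefactors --- in particular the correct powers of $\sqrt{t}$ and the doubled rational factors in \eref{HT-rec2}. The degree bound $2n-1$ of property~\textbf{1} is delicate for the same reason, since the ``$-1$'' reflects the constraint imposed by the half-turn pairing rather than a naive count on two independent lines.
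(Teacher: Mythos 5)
Your overall strategy coincides with the paper's: Yang--Baxter sliding for the $y$-symmetry, a renormalized degree count on the two $x_n$-lines for polynomiality, freezing arguments at $x_n=\b{y}_n/t$ and $x_n=\b{y}_n$ for the two recursions, and direct enumeration for $n=1$. Your treatment of properties \textbf{2}--\textbf{4} matches the paper's proof in substance: in particular, for \eref{HT-rec1} the paper initiates the freeze by observing that the top-left vertex can only be of type $a_+$ or $c_+$, and that $a_+$ vanishes at the specialization, which is the same mechanism as your ``$a_\pm$ vanishes where the $x_n$-lines meet the $y_n$-column'' (the bottom-left vertex is then frozen by propagation, not by a vanishing weight, but this is a harmless imprecision); your double-pole argument for \eref{HT-rec2} and the $b_\pm=\sqrt{t}$ bookkeeping for the prefactors are likewise the paper's.

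The genuine gap is in property \textbf{1}. Multiplying $Z^{(n)}_{\rm HT}$ by $\prod_{i,j=1}^{n}(1-x_iy_j)^2$ multiplies \emph{every} vertex weight by $(1-x_iy_j)$, including the $b_\pm$ vertices, which have no denominator: the renormalized $b_\pm$ weight is $\sqrt{t}\,(1-x_iy_j)$, of degree one in $x_i$, not degree zero as you assert. (Your per-vertex accounting corresponds to multiplying each weight by ``its own denominator'', a configuration-dependent operation that is not multiplication of $Z$ by the fixed polynomial in the statement.) With the correct accounting, every renormalized weight on the two $x_n$-lines has degree one except $c_+$, which is constant, so each configuration contributes degree $2n$ minus the number of its $c_+$ vertices on those lines, and the bound $2n-1$ requires the key combinatorial fact that you leave as ``should yield'': every configuration has \emph{exactly one} $c_+$ vertex on the glued pair of $x_n$-lines. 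Concretely, the boundary arrows force every top-row vertex to be of type $a_+$, $b_-$ or $c_+$ and every bottom-row vertex to be of type $a_-$, $b_+$ or $c_+$ (so $c_-$ never occurs there); and since both ends of the glued line carry inward-pointing arrows, the number of arrow reversals ($c$-vertices) along it is odd and must alternate $c_+,c_-,c_+,\dots$, hence equals one. This is the statement the paper invokes (``exactly one $c_+$ vertex occurs in these two rows''), and without it your degree count does not close.
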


\begin{proof}
Working directly from the lattice definition in Figure \ref{fig:HT}, we demonstrate these properties one by one.
\begin{enumerate}[label=\bf\arabic*.]

\item Multiplying the partition function by $\prod_{i,j=1}^{n} (1-x_i y_j)^2$ is equivalent to multiplying each individual Boltzmann weight by $(1-x_i y_j)$. After this renormalization, it is clear that every Boltzmann weight is a degree-1 polynomial in $x_i$, with the sole exception of the $c_{+}$ vertex (which is a constant).

Focusing attention on the top and bottom rows of the lattice in Figure \ref{fig:HT}, which are the only places which have dependence on $x_n$, one can easily deduce that exactly one $c_{+}$ vertex occurs in these two rows. It follows that the renormalized partition function is a polynomial in $x_n$ of degree $2n-1$. 

\item Symmetry in the $y$ variables is deduced using a standard argument involving the Yang--Baxter equation (see, for example, \cite{kzj}). Indeed, any two adjacent vertical lines can be exchanged using this procedure.

\item Setting $x_n = \b{y}_n/t$ eliminates the possibility that the top-left vertex of the lattice is an $a_{+}$ vertex. It follows that it must be a $c_{+}$ vertex. This forces a subset of the vertices into a {\it frozen configuration}, shown on the left of Figure \ref{fig:HT-freeze}. Studying the Boltzmann weights of the frozen region, we find that they contribute the total factor $-t(\sqrt{t})^{4n-3} = -t^{2n-1/2}$. The remaining (unfrozen) region is just $Z^{(n-1)}_{\rm HT}$. Hence we recover the first recursion relation \eref{HT-rec1}.


\begin{figure}[H]
\begin{tabular}{cc|cc}
\begin{tikzpicture}[scale=0.6]

\foreach\x in {1,...,6}
\draw[thick]
(0,\x) -- (3.5,\x);

\foreach\x in {1,...,2}
{\node[label={left: \fs ${\color{red} \shortrightarrow} \ x_{\x}$}] at (0,4-\x) {};
\node[label={left: \fs ${\color{red} \shortrightarrow} \ x_{\x}$}] at (0,3+\x) {};}

\node[label={left: \fs ${\color{red} \shortrightarrow} \ \b{y}_3/t$}] at (0,1) {};
\node[label={left: \fs ${\color{red} \shortrightarrow} \ \b{y}_3/t$}] at (0,6) {};

\draw[thick, smooth] (3.5,3) arc (-90:90:0.5);
\draw[thick, smooth] (3.5,2) arc (-90:90:1.5);
\draw[thick, smooth] (3.5,1) arc (-90:90:2.5);


\foreach\y in {1,...,6}
\node at (0.5,\y) {\r};

\node at (1.5,6) {\l};
\foreach\y in {1,...,5}
\node at (1.5,\y) {\r};

\foreach\x in {1,2}
{\node at (4.5-\x,6) {\l};
\node at (4.5-\x,1) {\r};}


\foreach\x in {1,...,3}
\draw[thick]
(\x,0) -- (\x,7);


\foreach\x in {1,...,3}
\node at (\x,6.5) {\u};

\node at (1,5.5) {\d};
\foreach\x in {2,3}
\node at (\x,5.5) {\u};

\foreach\y in {1,...,3}
\node at (1,5.5-\y) {\d};

\foreach\x in {1,...,3}
\node at (\x,1.5) {\d};

\foreach\x in {1,...,3}
\node at (\x,0.5) {\d};

\foreach\x in {1,...,3}
\node[label={below: \fs ${\color{red} \begin{array}{c} {\color{black} \b{y}_{\x}} \\ \shortuparrow \end{array} }$}] at (4-\x,0) {};
\end{tikzpicture}
&
&
&
\begin{tikzpicture}[scale=0.6]

\foreach\x in {1,...,6}
\draw[thick]
(0,\x) -- (3.5,\x);

\foreach\x in {1,...,2}
{\node[label={left: \fs ${\color{red} \shortrightarrow} \ x_{\x}$}] at (0,4-\x) {};
\node[label={left: \fs ${\color{red} \shortrightarrow} \ x_{\x}$}] at (0,3+\x) {};}

\node[label={left: \fs ${\color{red} \shortrightarrow} \ \b{y}_3$}] at (0,1) {};
\node[label={left: \fs ${\color{red} \shortrightarrow} \ \b{y}_3$}] at (0,6) {};

\draw[thick, smooth] (3.5,3) arc (-90:90:0.5);
\draw[thick, smooth] (3.5,2) arc (-90:90:1.5);
\draw[thick, smooth] (3.5,1) arc (-90:90:2.5);


\foreach\y in {1,...,6}
\node at (0.5,\y) {\r};

\foreach\y in {2,...,6}
\node at (1.5,\y) {\r};
\node at (1.5,1) {\l};

\foreach\x in {1,2}
{\node at (4.5-\x,6) {\r};
\node at (4.5-\x,1) {\l};}


\foreach\x in {1,...,3}
\draw[thick]
(\x,0) -- (\x,7);


\foreach\x in {1,...,3}
\node at (\x,6.5) {\u};

\foreach\x in {1,...,3}
\node at (\x,5.5) {\u};

\foreach\y in {1,...,3}
\node at (1,5.5-\y) {\u};

\node at (1,1.5) {\u};
\foreach\x in {2,3}
\node at (\x,1.5) {\d};

\foreach\x in {1,...,3}
\node at (\x,0.5) {\d};

\foreach\x in {1,...,3}
\node[label={below: \fs ${\color{red} \begin{array}{c} {\color{black} \b{y}_{\x}} \\ \shortuparrow \end{array} }$}] at (4-\x,0) {};
\end{tikzpicture}
\end{tabular}
\caption{The two recursion relations satisfied by $Z_{\rm HT}$, in the case $n=3$. On the left, the freezing procedure which gives rise to equation \eref{HT-rec1}. On the right, the freezing procedure which gives rise to equation \eref{HT-rec2}.}
\label{fig:HT-freeze} 
\end{figure}
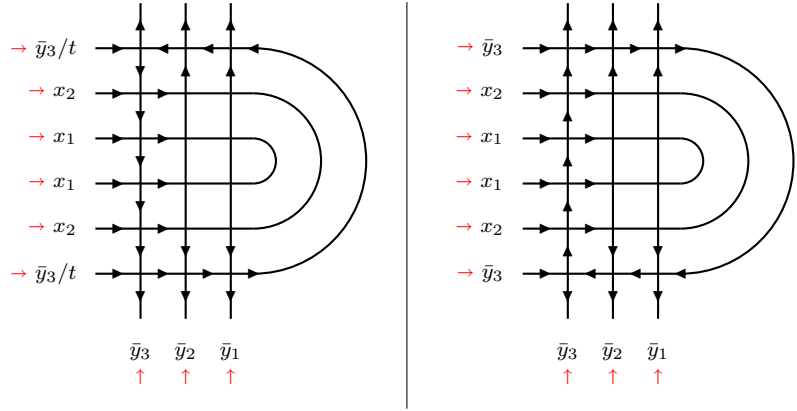

Multiplying by $(1-x_n y_n)^2$ and taking $x_n \rightarrow \b{y}_n$ eliminates the possibility that the bottom-left vertex of the lattice is a $b_{+}$ vertex. It must therefore be a $c_{+}$ vertex. Once again, this forces a subset of the vertices to freeze out, as is shown on the right of Figure \ref{fig:HT-freeze}. The Boltzmann weights of these frozen vertices contribute the total factor
\begin{align*}
(1-t)^2
\prod_{i=1}^{n-1}
\frac{(1-ty_i \b{y}_n)^2}{(1-y_i \b{y}_n)^2}
\frac{(1-tx_i y_n)^2}{(1-x_i y_n)^2},
\end{align*}
while the unfrozen region again represents $Z^{(n-1)}_{\rm HT}$. This yields the second recursion relation \eref{HT-rec2}.

\item The $n=1$ case is small enough to be calculated explicitly:
\begin{center}
\begin{tikzpicture}[scale=0.6]
\node at (-3,0.5) {$Z^{(1)}_{\rm HT}\ \ \ =$};
\draw[thick, smooth] (0,0)--(3/2,0);
\draw[thick, smooth] (0,1)--(3/2,1);
\node[label={left: \fs ${\color{red} \shortrightarrow} \ x_1$}] at (0,0) {};
\node[label={left: \fs ${\color{red} \shortrightarrow} \ x_1$}] at (0,1) {};
\node at (0.5,0) {$\r$}; \node at (1.5,0) {\r};
\node at (0.5,1) {$\r$}; \node at (1.5,1) {\l};
\draw[thick, smooth] (3/2,0) arc (-90:90:0.5);
\draw[thick, smooth] (1,-1)--(1,2);
\node[label={below: \fs ${\color{red} \begin{array}{c} {\color{black} \b{y}_1} \\ \shortuparrow \end{array} }$}] at (1,-1) {};
\node at (1,-0.5) {$\d$};
\node at (1,0.5) {\d};
\node at (1,1.5) {$\u$};
\node at (3,0.5) {$+$};
\end{tikzpicture}
\quad
\begin{tikzpicture}[scale=0.6]
\draw[thick, smooth] (0,0)--(3/2,0);
\draw[thick, smooth] (0,1)--(3/2,1);
\node[label={left: \fs ${\color{red} \shortrightarrow} \ x_1$}] at (0,0) {};
\node[label={left: \fs ${\color{red} \shortrightarrow} \ x_1$}] at (0,1) {};
\node at (0.5,0) {$\r$}; \node at (1.5,0) {\l};
\node at (0.5,1) {$\r$}; \node at (1.5,1) {\r};
\draw[thick, smooth] (3/2,0) arc (-90:90:0.5);
\draw[thick, smooth] (1,-1)--(1,2);
\node[label={below: \fs ${\color{red} \begin{array}{c} {\color{black} \b{y}_1} \\ \shortuparrow \end{array} }$}] at (1,-1) {};
\node at (1,-0.5) {$\d$};
\node at (1,0.5) {\u};
\node at (1,1.5) {$\u$};
\end{tikzpicture}
\end{center}
\noindent Substituting the Boltzmann weights into this expression, we obtain
\begin{align*}
Z^{(1)}_{\rm HT}
=
\frac{(1-t) \sqrt{t}}{1-x_1 y_1}
+
\frac{1-tx_1 y_1}{1-x_1 y_1}
\frac{(1-t)}{1-x_1 y_1}
=
\frac{(1-t)(1+\sqrt{t})(1-\sqrt{t} x_1 y_1)}{(1-x_1 y_1)^2}.
\end{align*}

\end{enumerate}
\end{proof}

\begin{thm}[Kuperberg]
The partition function on the half-turn symmetric lattice is given by a product of determinants:
\begin{multline}
\label{ZHT}
Z_{\rm HT}(x_1,\dots,x_n; y_1,\dots,y_n;t)
=
\\
\frac{\prod_{i,j=1}^{n} (1-t x_i y_j)^2}
{\prod_{1\leq i<j \leq n} (x_i-x_j)^2 (y_i-y_j)^2}
\det_{1 \leq i,j \leq n}\left[ \frac{(1-t)}{(1-x_i y_j)(1-t x_i y_j)}\right]
\det_{1 \leq i,j \leq n}\left[ \frac{1+\sqrt{t}-(\sqrt{t}+t) x_i y_j}{(1-x_i y_j)(1-t x_i y_j)} \right].
\end{multline}
\end{thm}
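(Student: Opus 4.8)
The plan is to run the Izergin--Korepin uniqueness argument: I will show that the right-hand side of \eref{ZHT}, which I denote $\Phi_n$, satisfies exactly the four properties that Lemma \ref{lem:HT-prop} establishes for $Z^{(n)}_{\rm HT}$, and that those properties determine their common solution uniquely. The structural observation that makes this tractable is that $\Phi_n$ factorizes as $\Phi_n = A_n B_n$, where
\[
A_n = \frac{\prod_{i,j=1}^{n}(1-tx_iy_j)}{\prod_{1\leq i<j\leq n}(x_i-x_j)(y_i-y_j)}\det_{1\leq i,j\leq n}\left[\frac{1-t}{(1-x_iy_j)(1-tx_iy_j)}\right]
\]
is the domain wall partition function (the right-hand side of \eref{HL-ASM}, equal to $\mathcal{F}_n$ by Theorem \ref{cauch-thm}) and
\[
B_n = \frac{\prod_{i,j=1}^{n}(1-tx_iy_j)}{\prod_{1\leq i<j\leq n}(x_i-x_j)(y_i-y_j)}\det_{1\leq i,j\leq n}\left[\frac{1+\sqrt t-(\sqrt t+t)x_iy_j}{(1-x_iy_j)(1-tx_iy_j)}\right].
\]
Both are the $q=0$ specialization of the right-hand side of \eref{mac-asm}, at $u=t$ and $u=-\sqrt t$ respectively; writing $G_n^{(u)}$ for this common object (so $A_n=G_n^{(t)}$, $B_n=G_n^{(-\sqrt t)}$), I can treat the two factors uniformly via the two recursion relations for $\mathcal{L}_n=\prod_{i,j}(1-x_iy_j)\,G_n^{(u)}$ established for generic $u$ in the proof of Theorem \ref{mac-cauch-thm}.

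First I would settle uniqueness. Setting $\tilde Z = \prod_{i,j=1}^n(1-x_iy_j)^2 Z^{(n)}_{\rm HT}$, property \textbf{1} makes $\tilde Z$ a polynomial in $x_n$ of degree at most $2n-1$, hence determined by its values at $2n$ points. Using symmetry in the $y$-variables (property \textbf{2}), the recursion \eref{HT-rec1} supplies the value of $\tilde Z$ at each of the $n$ points $x_n=\bar{y}_k/t$, while \eref{HT-rec2} supplies the value at each of the $n$ points $x_n=\bar{y}_k$ (there the double factor $(1-x_ny_k)^2$ in the renormalization cancels the double pole). For generic $y$ these $2n$ points are distinct, so Lagrange interpolation expresses $\tilde Z$ in terms of $Z^{(n-1)}_{\rm HT}$; together with the base case $n=1$ (property \textbf{4}), this shows properties \textbf{1}--\textbf{4} have a unique solution, reducing the theorem to verifying that $\Phi_n$ satisfies them.

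I would then verify the four properties for $\Phi_n$. Symmetry in $\{y_1,\dots,y_n\}$ is immediate from the determinantal forms. For polynomiality, Theorem \ref{cauch-thm} (property \textbf{2}) gives that $\prod_{i,j}(1-x_iy_j)A_n$ has degree $n-1$ in $x_n$ — the degree drops from the generic value $n$ precisely because the numerator $1-u+(u-t)x_iy_j$ becomes constant at $u=t$ — while $\prod_{i,j}(1-x_iy_j)B_n=\mathcal{L}_n$ has degree at most $n$, so the product $\prod_{i,j}(1-x_iy_j)^2\Phi_n$ has degree at most $2n-1$, which is property \textbf{1}. For the recursions I would extract from the two $\mathcal{L}_n$-relations of Theorem \ref{mac-cauch-thm} the behavior of $G_n^{(u)}$ at the two distinguished values of $x_n$: the relation at $x_n=1/(ty_n)$ yields $G_n^{(u)}\big|_{x_n=\bar{y}_n/t}=-u\,t^{n-1}\,G_{n-1}^{(u)}$, while dividing the relation at $x_n=1/y_n$ by the vanishing factor $(1-x_ny_n)$ gives the $u$-independent residue relation
\[
\lim_{x_n\to\bar{y}_n}(1-x_ny_n)\,G_n^{(u)} = (1-t)\prod_{i=1}^{n-1}\frac{1-tx_iy_n}{1-x_iy_n}\prod_{j=1}^{n-1}\frac{1-ty_j\bar{y}_n}{1-y_j\bar{y}_n}\,G_{n-1}^{(u)}.
\]
Specializing to $u=t$ and $u=-\sqrt t$ and multiplying the factors, the first relation gives $\Phi_n|_{x_n=\bar{y}_n/t}=(-t^n)(t^{n-1/2})\Phi_{n-1}=-t^{2n-1/2}\Phi_{n-1}$, matching \eref{HT-rec1}, and the second produces exactly the squared prefactor of \eref{HT-rec2}. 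Property \textbf{4} is then the direct check $A_1=(1-t)/(1-x_1y_1)$, $B_1=(1+\sqrt t-(\sqrt t+t)x_1y_1)/(1-x_1y_1)$, whose product equals $Z^{(1)}_{\rm HT}$ after the identity $(1+\sqrt t)(1-\sqrt t x_1y_1)=1+\sqrt t-(\sqrt t+t)x_1y_1$.

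The main obstacle is the careful extraction of the residue relation at $x_n=\bar{y}_n$ from the $x_n=1/y_n$ recursion: one must track how $\prod_{i,j}(1-x_iy_j)$ develops a simple zero there, cancel it against the simple pole of $G_n^{(u)}$, and confirm that the leftover products telescope into precisely the freezing weights of \eref{HT-rec2}. The fortunate feature that makes the whole scheme close is that this residue relation is \emph{independent} of $u$, so that $A_n$ and $B_n$ contribute identical geometric prefactors whose product is the square demanded by the half-turn freezing; the only other point needing care is keeping the $\sqrt t$ bookkeeping straight, so that $-t^n$ times $t^{n-1/2}$ delivers the expected $-t^{2n-1/2}$.
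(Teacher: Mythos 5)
Your proposal is correct and takes essentially the same approach as the paper: the paper's proof likewise treats the right-hand side of \eref{ZHT} as an Ansatz, verifies that it satisfies the four properties of Lemma \ref{lem:HT-prop}, and concludes by uniqueness via Lagrange interpolation. Your factorization of the Ansatz into the $u=t$ and $u=-\sqrt{t}$ specializations of the $q=0$ right-hand side of \eref{mac-asm}, with both recursion relations inherited from the proof of Theorem \ref{mac-cauch-thm}, simply makes explicit the verification that the paper labels ``straightforward.''
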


\begin{proof}
Taking the expression \eref{ZHT} as an Ansatz for the partition function, it is straightforward to show that it satisfies the four properties of Lemma \ref{lem:HT-prop}. Furthermore, these properties are uniquely-determining by the usual arguments of Lagrange interpolation.
\end{proof}

\subsection{$u$-deformed Cauchy identity at Schur and Hall--Littlewood level}
\label{ssec:u-def-ASM}

\begin{cor}

The Cauchy identity for Schur polynomials can be doubly refined, by the introduction of two deformation parameters $t$ and $u$:
\begin{align}
\label{ASM-cor1}
\sum_{\lambda}
\prod_{i=1}^{n}
(1 - u t^{\lambda_i - i + n})
s_{\lambda}(x_1,\dots,x_n)
s_{\lambda}(y_1,\dots,y_n)
=
\frac{1}{\Delta(x) \Delta(y)}
\det_{1 \leq i,j \leq n}
\left[
\frac{1-u + (u-t) x_i y_j}{(1-x_i y_j) (1-t x_i y_j)}
\right].
\end{align}

\end{cor}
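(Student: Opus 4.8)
The plan is to derive \eref{ASM-cor1} as the diagonal specialization $q=t$ of Warnaar's Theorem \ref{mac-cauch-thm}, exploiting the standard fact that Macdonald polynomials collapse to Schur polynomials along this diagonal. First I would recall that $P_{\lambda}(x_1,\dots,x_n;t,t)=s_{\lambda}(x_1,\dots,x_n)$, so that setting $q=t$ in \eref{mac-asm} turns every Macdonald polynomial on the left-hand side into the corresponding Schur polynomial. It then suffices to track what happens to each of the remaining $q$-dependent factors as $q\to t$, checking that both sides reproduce \eref{ASM-cor1} exactly.

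There are three factors on the left to control. The normalization $b_{\lambda}(q,t)=\prod_{s\in\lambda}\frac{1-q^{a(s)}t^{l(s)+1}}{1-q^{a(s)+1}t^{l(s)}}$ becomes, at $q=t$, a product whose numerators and denominators both equal $1-t^{a(s)+l(s)+1}$, so that $b_{\lambda}(t,t)=1$; this is precisely why the Schur Cauchy sum carries no extra weight. The prefactor $\prod_{i=1}^{n}(1-u q^{\lambda_i}t^{n-i})$ becomes $\prod_{i=1}^{n}(1-u t^{\lambda_i+n-i})=\prod_{i=1}^{n}(1-u t^{\lambda_i-i+n})$, matching the left-hand side of \eref{ASM-cor1} verbatim. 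Hence the entire left-hand side specializes correctly, with no residual dependence on $q$ or on $b_{\lambda}$.

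On the right-hand side the only genuine computation is the telescoping of the infinite $q$-Pochhammer product. Since $(x_i y_j;t)_{\infty}=(1-x_i y_j)(t x_i y_j;t)_{\infty}$, the ratio collapses to $\prod_{i,j=1}^{n}\frac{(t x_i y_j;t)_{\infty}}{(x_i y_j;t)_{\infty}}=\prod_{i,j=1}^{n}\frac{1}{1-x_i y_j}$, which cancels exactly against the factor $\prod_{i,j=1}^{n}(1-x_i y_j)$ appearing in \eref{mac-asm}. What remains is $\frac{1}{\prod_{1\leq i<j\leq n}(x_i-x_j)(y_i-y_j)}$ times the displayed determinant, and since $\Delta(x)\Delta(y)=\prod_{1\leq i<j\leq n}(x_i-x_j)(y_i-y_j)$ and the determinant entry $\frac{1-u+(u-t)x_iy_j}{(1-x_iy_j)(1-tx_iy_j)}$ is unchanged by the specialization, this is precisely the right-hand side of \eref{ASM-cor1}. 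There is no serious obstacle here: the whole content of the corollary is already contained in Theorem \ref{mac-cauch-thm}, and the only points requiring care are the evaluation $b_{\lambda}(t,t)=1$ and the Pochhammer cancellation, both of which are routine.
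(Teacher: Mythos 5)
Your proposal is correct and is essentially the paper's own derivation: the paper obtains \eref{ASM-cor1} precisely as the $q=t$ specialization of Theorem \ref{mac-cauch-thm}, and your verification of the three specialization facts ($P_{\lambda}(x;t,t)=s_{\lambda}(x)$, $b_{\lambda}(t,t)=1$, and the telescoping $(tx_iy_j;t)_{\infty}/(x_iy_j;t)_{\infty}=(1-x_iy_j)^{-1}$) fills in exactly the routine details the paper leaves implicit. The only remark worth adding is that the paper also notes an alternative, fully elementary proof via the Cauchy--Binet identity (using the determinantal form of the Schur polynomials), which is the route it later adapts for the symplectic analogue in Theorem \ref{symp-schur-thm}.
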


\begin{cor}

The Cauchy identity for Hall--Littlewood polynomials can be refined by the introduction of a single deformation parameter $u$:
\begin{multline}
\label{ASM-cor2}
\sum_{\lambda}
\prod_{i=1}^{m_0(\lambda)}
(1 - u t^{i-1})
b_{\lambda}(t)
P_{\lambda}(x_1,\dots,x_n;t)
P_{\lambda}(y_1,\dots,y_n;t)
=
\\
\frac{\prod_{i,j=1}^{n} (1- t x_i y_j)}{\Delta(x) \Delta(y)}
\det_{1 \leq i,j \leq n}
\left[
\frac{1-u + (u-t) x_i y_j}{(1-x_i y_j) (1-t x_i y_j)}
\right].
\end{multline}

\end{cor}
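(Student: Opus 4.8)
The plan is to obtain the Hall--Littlewood Cauchy corollary \eqref{ASM-cor2} as the $q \to 0$ specialization of Warnaar's Macdonald identity \eqref{mac-asm}, which has already been established as Theorem \ref{mac-cauch-thm}. The right hand side of \eqref{mac-asm} carries a $q$-dependent prefactor $\prod_{i,j=1}^{n} (t x_i y_j;q)/(x_i y_j;q)$, which at $q=0$ collapses to $\prod_{i,j=1}^{n} (1-t x_i y_j)/(1-x_i y_j)$; combining this with the existing factor $\prod_{i,j}(1-x_i y_j)$ in \eqref{mac-asm} leaves precisely $\prod_{i,j=1}^{n}(1-t x_i y_j)$, and the determinant on the right is already $q$-independent. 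Writing $\Delta(x) = \prod_{1\leq i<j\leq n}(x_i-x_j)$ and similarly for $\Delta(y)$, the right hand side of \eqref{mac-asm} at $q=0$ therefore matches the right hand side of \eqref{ASM-cor2} term for term. So the entire content of the corollary is the identification of the left hand sides under $q\to 0$.

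First I would handle the symmetric functions: at $q=0$ the Macdonald polynomials $P_\lambda(x;q,t)$ reduce to Hall--Littlewood polynomials $P_\lambda(x;t)$, and the Cauchy coefficient $b_\lambda(q,t)$ reduces to $b_\lambda(t)$, both standard facts from \cite{mac}. The one genuine computation is the deformation weight $\prod_{i=1}^{n}(1-u q^{\lambda_i} t^{n-i})$. Recall $\lambda$ has maximal length $n$, so it is padded with $m_0(\lambda)=n-\ell(\lambda)$ zero parts, which sit in positions $i=\ell(\lambda)+1,\dots,n$. For the nonzero parts $\lambda_i>0$ one has $q^{\lambda_i}\to 0$, so each such factor becomes $1$; only the zero parts survive, contributing $\prod_{i=\ell(\lambda)+1}^{n}(1-u t^{n-i})$. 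Re-indexing this product by $j=n-i+1$ running from $1$ to $m_0(\lambda)$ turns the exponent $n-i$ into $j-1$, yielding exactly $\prod_{i=1}^{m_0(\lambda)}(1-u t^{i-1})$, the weight appearing in \eqref{ASM-cor2}. This index bookkeeping, together with the observation that the limit may be taken termwise since each factor depends on $q$ only through $q^{\lambda_i}$ with $\lambda_i\geq 0$, is the substance of the proof.

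The main obstacle, such as it is, is not analytic but purely organizational: one must check that the $q\to 0$ limit commutes with the (infinite) sum over $\lambda$ and that the partial cancellation of the $q$-prefactor against $\prod_{i,j}(1-x_iy_j)$ is carried out correctly. Convergence is not a serious issue because, as in \eqref{hl-cauchy}, the renormalized identity is a formal/polynomial statement once one clears denominators, and both sides are manifestly rational in the $x_i,y_j$ with the limit acting coefficientwise in the $x,y$ expansion. Thus the corollary is essentially an exercise in specialization, and no new structural input beyond Theorem \ref{mac-cauch-thm} and the standard $q=0$ reductions of Macdonald theory is required.
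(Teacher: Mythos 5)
Your proposal is correct and is precisely the paper's own route: the corollary is stated there as the $q=0$ specialization of Theorem \ref{mac-cauch-thm}, with the $q$-Pochhammer prefactor collapsing to $\prod_{i,j}(1-tx_iy_j)/(1-x_iy_j)$ and the weight $\prod_{i=1}^{n}(1-uq^{\lambda_i}t^{n-i})$ reducing to $\prod_{i=1}^{m_0(\lambda)}(1-ut^{i-1})$ exactly as you compute. The only difference is that the paper leaves these specialization details implicit, whereas you have written them out (including the re-indexing $j=n-i+1$), which is a faithful expansion rather than a different argument.
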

These identities are recovered as the special cases $q=t$ and $q=0$ of Theorem \ref{mac-cauch-thm}. In view of the fact that the Schur polynomials on the left hand side of \eref{ASM-cor1} are determinants, it is possible to prove \eref{ASM-cor1} by completely elementary means via the Cauchy--Binet identity.\footnote{Dividing equation \eref{ASM-cor1} by $(1-t)^n$, letting $u = t^{-z}$ and taking the limit $t \rightarrow 1$, the left hand side of \eref{ASM-cor1} becomes
\begin{align*}
\sum_{\lambda}
\prod_{i=1}^{n}
(\lambda_i - i + n - z)
s_{\lambda}(x_1,\dots,x_n)
s_{\lambda}(y_1,\dots,y_n). 
\end{align*}
This limiting form was already investigated in \cite{agj}, although there the right hand side was not expressed in determinant form. We thank F. Jouhet for showing us this reference.}  
On the other hand \eref{ASM-cor2} remains a highly non-trivial identity, admitting no simple proof (that we know of) outside of the use of Macdonald difference operators, or a Lagrange interpolation style of proof similar to that of Section \ref{proof-ASM}.

One can consider further specializations of \eref{ASM-cor2}, by setting the free parameter $u$ to various values. Setting $u=0$ produces the Cauchy identity for 
Hall--Littlewood polynomials, while setting $u=t$ reproduces equation \eref{HL-ASM}. Finally, in the case $u=-\sqrt{t}$ we recover one half of the factors in equation \eref{ZHT} for $Z_{\rm HT}$ (the remaining factors being those of the domain wall partition function).

\section{$u$-deformed Littlewood identity and doubly off-diagonally symmetric alternating sign matrices}
\label{sec:OOASM}

This section proceeds largely in parallel with the previous one. The goal here is to study the $q=0$ specialization of the conjecture \eref{mac-osasm}, and its connection with the six-vertex model. The relevant domain in this case is the {\it doubly off-diagonally symmetric} lattice, whose configurations are in one-to-one correspondence with 
off-diagonally/off-anti-diagonally symmetric alternating sign matrices \cite{kup2}.

\subsection{Corner vertices}

In this section it is necessary to introduce {\it boundary vertices}, which consist of a single lattice line making a turn through a node. The type of boundary vertices of interest to us are the {\it corner vertices}, shown in Figure \ref{fig:corner}.

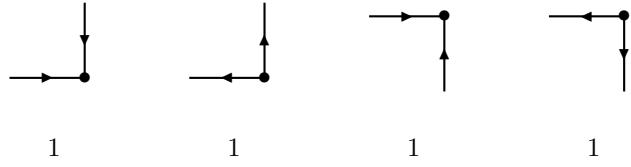
\begin{figure}[H]
\begin{tabular}{cccc}
\begin{tikzpicture}[>=stealth]
\draw[thick]
(0,0) to (1,0) to (1,1);
\node at (1,0) {$\bullet$};
\node at (0.5,0) {\r};
\node at (1,0.5) {\d};
\end{tikzpicture}
\quad\quad\quad&
\begin{tikzpicture}[>=stealth]
\draw[thick]
(0,0) to (1,0) to (1,1);
\node at (1,0) {$\bullet$};
\node at (0.5,0) {\l};
\node at (1,0.5) {\u};
\end{tikzpicture}
\quad\quad\quad&
\begin{tikzpicture}[rotate=90]
\draw[thick]
(0,0) to (1,0) to (1,1);
\node at (1,0) {$\bullet$};
\node at (0.5,0) {\u};
\node at (1,0.5) {\r};
\end{tikzpicture}
\quad\quad\quad&
\begin{tikzpicture}[rotate=90]
\draw[thick]
(0,0) to (1,0) to (1,1);
\node at (1,0) {$\bullet$};
\node at (0.5,0) {\d};
\node at (1,0.5) {\l};
\end{tikzpicture}
\\ \\
1 
\quad\quad\quad&
1 
\quad\quad\quad&
1 
\quad\quad\quad&
1 
\end{tabular}
\caption{The corner vertices, which in this work are all assigned equal Boltzmann weights. In \cite{kup2}, the corner vertices are assigned the Boltzmann weights $b,\b{b},c,\b{c}$ respectively, where $b$ and $c$ are arbitrary parameters (not to be confused with $b$ and $c$ type vertices). For simplicity, here we choose both of these parameters to be equal to 1.}
\label{fig:corner}
\end{figure}
Together with the bulk vertices of Figure \ref{fig:6vertices}, the corner vertices satisfy the {\it corner reflection} equations shown in Figure \ref{fig:corner-YB}. The corner reflection equations, in conjunction with the regular Yang--Baxter equation, ensure that the $Z_{\rm OO}$ partition function that we subsequently study is symmetric in its rapidities.

\begin{figure}[H]
\begin{tabular}{cc|cc}
\begin{tikzpicture}[scale=0.3]

\draw[thick,smooth] (-0.5,0)--(0,0);
\draw[thick,smooth] (-0.5,2)--(0,2);
\draw[thick, smooth] (0,2) arc (90:45:1);
\draw[thick, smooth] (0,0) arc (-90:-45:1);
\draw[thick, smooth] ({sqrt(2)/2},{1+sqrt(2)/2})--({3*sqrt(2)/2},{1-sqrt(2)/2});
\draw[thick, smooth] ({sqrt(2)/2},{1-sqrt(2)/2})--({3*sqrt(2)/2},{1+sqrt(2)/2});
\draw[thick, smooth] ({2*sqrt(2)},2) arc (90:135:1);
\draw[thick, smooth] ({2*sqrt(2)},0) arc (-90:-135:1);
\draw[thick, smooth] ({2*sqrt(2)}, 2)--({2*sqrt(2)+0.5}, 2);
\draw[thick, smooth] ({2*sqrt(2)}, 0)--({2*sqrt(2)+0.5}, 0);
\draw[thick, smooth] ({2*sqrt(2)+0.5}, 0)--({2*sqrt(2)+0.5}, 4);
\draw[thick, smooth] ({2*sqrt(2)+0.5+2}, 2)--({2*sqrt(2)+0.5+2}, 4);
\draw[thick, smooth] ({2*sqrt(2)+0.5}, 2)--({2*sqrt(2)+0.5+2}, 2);
\node at ({2*sqrt(2)+0.5}, 0) {$\bullet$};
\node at ({2*sqrt(2)+0.5+2}, 2) {$\bullet$};
\node[label={left: \fs ${\color{red} \shortrightarrow} \ x$}] at (-0.5, 0) {};
\node[label={left: \fs ${\color{red} \shortrightarrow} \ y$}] at (-0.5, 2) {};
\node[label={above: \fs ${\color{red} \begin{array}{c}  \shortuparrow \\ {\color{black} \b{y}} \end{array} }$}] at ({2*sqrt(2)+0.5},4) {};
\node[label={above: \fs ${\color{red} \begin{array}{c}  \shortuparrow \\ {\color{black} \b{x}} \end{array} }$}] at ({2*sqrt(2)+0.5+2},4) {};

\node at ({2*sqrt(2) + 2 + 0.5 + 4}, 2) {$=$};

\draw[thick, smooth] ({2*sqrt(2) + 2 + 0.5 + 8}, 0)--({2*sqrt(2) + 2 + 0.5 + 10}, 0);
\draw[thick, smooth] ({2*sqrt(2) + 2 + 0.5 + 8}, 2)--({2*sqrt(2) + 2 + 0.5 + 12}, 2);
\draw[thick, smooth] ({2*sqrt(2) + 2 + 0.5 + 10}, 0)--({2*sqrt(2) + 2 + 0.5 + 10}, 2);
\draw[thick, smooth] 
({2*sqrt(2)+2+0.5 + 2 + 4 + 6},{2  + 2*sqrt(2) + 1})--({2*sqrt(2)+2+0.5 + 2 + 4 + 6},{2-0.5 + 2*sqrt(2) + 1});
\draw[thick, smooth] 
({2*sqrt(2)+2+0.5 + 2 + 2 + 6},{2 + 2*sqrt(2) + 1})--({2*sqrt(2)+2+0.5 + 2 + 2 + 6},{2-0.5 + 2*sqrt(2) + 1});
\draw[thick, smooth] ({2*sqrt(2)+2+0.5 + 2 + 2 + 6},{2-0.5 + 2*sqrt(2) + 1}) arc (-180:-135:1);
\draw[thick, smooth] ({2*sqrt(2)+2+0.5 + 2 + 4 + 6},{2-0.5 + 2*sqrt(2) + 1}) arc (0:-45:1);
\draw[thick, smooth] 
({2*sqrt(2)+2+0.5 + 2 + 2 + 1 + 6 - sqrt(2)/2},{2-0.5-sqrt(2)/2 + 2*sqrt(2) + 1})--
({2*sqrt(2)+2+0.5 + 2 + 2 + 1 + 6 + sqrt(2)/2},{2-0.5-3*sqrt(2)/2 + 2*sqrt(2) + 1});
\draw[thick, smooth] 
({2*sqrt(2)+2+0.5 + 2 + 2 + 1 + 6 - sqrt(2)/2},{2-0.5-3*sqrt(2)/2 + 2*sqrt(2) + 1})--
({2*sqrt(2)+2+0.5 + 2 + 2 + 1 + 6 + sqrt(2)/2},{2-0.5-sqrt(2)/2 + 2*sqrt(2) + 1});
\draw[thick, smooth] ({2*sqrt(2)+2+0.5 + 2 + 2 + 6},{2-0.5-2*sqrt(2) + 2*sqrt(2) + 1}) arc (180:135:1);
\draw[thick, smooth] ({2*sqrt(2)+2+0.5 + 2 + 4 + 6},{2-0.5-2*sqrt(2) + 2*sqrt(2) + 1}) arc (0:45:1);
\draw[thick, smooth]  ({2*sqrt(2)+2+0.5 + 2 + 2 + 6},{2-0.5-2*sqrt(2) + 2*sqrt(2) + 1})
--({2*sqrt(2)+2+0.5 + 2 + 2 + 6},{2-1-2*sqrt(2) + 2*sqrt(2) + 1});
\draw[thick, smooth]  ({2*sqrt(2)+2+0.5 + 2 + 4 + 6},{2-0.5-2*sqrt(2) + 2*sqrt(2) + 1})
--({2*sqrt(2)+2+0.5 + 2 + 4 + 6},{2-1-2*sqrt(2) + 2*sqrt(2) + 1});
\node at ({2*sqrt(2) + 2 + 0.5 + 10}, 0) {$\bullet$};
\node at ({2*sqrt(2) + 2 + 0.5 + 12}, 2) {$\bullet$};

\node[label={left: \fs ${\color{red} \shortrightarrow} \ x$}] at ({2*sqrt(2) + 2 + 0.5 + 8}, 0) {};
\node[label={left: \fs ${\color{red} \shortrightarrow} \ y$}] at ({2*sqrt(2) + 2 + 0.5 + 8}, 2) {};
\node[label={above: \fs ${\color{red} \begin{array}{c} \shortuparrow \\ {\color{black} \b{y}} \end{array} }$}] at ({2*sqrt(2)+2+0.5 + 2 + 2 + 6},{2 + 2*sqrt(2) + 1}) {};
\node[label={above: \fs ${\color{red} \begin{array}{c} \shortuparrow \\ {\color{black} \b{x}} \end{array} }$}] at ({2*sqrt(2)+2+0.5 + 2 + 4 + 6},{2 + 2*sqrt(2) + 1}) {};

\end{tikzpicture}
&
&
&
\begin{tikzpicture}[scale=0.3]

\draw[thick, smooth] (0,{3+2*sqrt(2)})--(2,{3+2*sqrt(2)});
\draw[thick, smooth] (0,{1+2*sqrt(2)})--(4,{1+2*sqrt(2)});
\draw[thick, smooth] (2,{3+2*sqrt(2)})--(2,{0.5+2*sqrt(2)});
\draw[thick, smooth] (4,{1+2*sqrt(2)})--(4,{0.5+2*sqrt(2)});
\draw[thick, smooth] (2,{-0.5 + 2*sqrt(2) + 1}) arc (-180:-135:1);
\draw[thick, smooth] (4,{-0.5 + 2*sqrt(2) + 1}) arc (0:-45:1);
\draw[thick, smooth] ({3 - sqrt(2)/2},{-0.5-sqrt(2)/2 + 2*sqrt(2) + 1})--({3 + sqrt(2)/2},{-0.5-3*sqrt(2)/2 + 2*sqrt(2) + 1});
\draw[thick, smooth] ({3 - sqrt(2)/2},{-0.5-3*sqrt(2)/2 + 2*sqrt(2) + 1})--({3 + sqrt(2)/2},{-0.5-sqrt(2)/2 + 2*sqrt(2) + 1});
\draw[thick, smooth] (2,0.5) arc (180:135:1);
\draw[thick, smooth] (4,0.5) arc (0:45:1);
\draw[thick, smooth]  (2,0)-- (2,0.5);
\draw[thick, smooth]  (4,0)-- (4,0.5);
\node at (2,{3+2*sqrt(2)}) {$\bullet$};
\node at (4,{1+2*sqrt(2)}) {$\bullet$};
\node[label={left: \fs ${\color{red} \shortrightarrow} \ x$}] at (0, {3+2*sqrt(2)}) {};
\node[label={left: \fs ${\color{red} \shortrightarrow} \ y$}] at (0, {1+2*sqrt(2)}) {};
\node[label={below: \fs ${\color{red} \begin{array}{c} \shortuparrow \\ {\color{black} \b{y}} \end{array} }$}] at (2,0) {};
\node[label={below: \fs ${\color{red} \begin{array}{c} \shortuparrow \\ {\color{black} \b{x}} \end{array} }$}] at (4,0) {};

\node at (7,{(3+2*sqrt(2))/2}) {$=$};

\draw[thick, smooth] (10, {3+2*sqrt(2)})--(10.5, {3+2*sqrt(2)});
\draw[thick, smooth] (10, {1+2*sqrt(2)})--(10.5, {1+2*sqrt(2)});
\draw[thick, smooth] ({10.5 + 2*sqrt(2)}, {3+2*sqrt(2)})--({11 + 2*sqrt(2)}, {3+2*sqrt(2)});
\draw[thick, smooth] ({10.5 + 2*sqrt(2)}, {1+2*sqrt(2)})--({13 + 2*sqrt(2)}, {1+2*sqrt(2)});
\draw[thick, smooth] ({11 + 2*sqrt(2)}, {3+2*sqrt(2)})--({11 + 2*sqrt(2)}, {2*sqrt(2)-1});
\draw[thick, smooth] ({13 + 2*sqrt(2)}, {1+2*sqrt(2)})--({13 + 2*sqrt(2)}, {2*sqrt(2)-1});
\draw[thick, smooth] (10.5,{3+2*sqrt(2)}) arc (90:45:1);
\draw[thick, smooth] (10.5,{1+2*sqrt(2)}) arc (-90:-45:1);
\draw[thick, smooth] ({10.5 + sqrt(2)/2},{2+5*sqrt(2)/2})--({10.5 + 3*sqrt(2)/2},{2+3*sqrt(2)/2});
\draw[thick, smooth] ({10.5 + sqrt(2)/2},{2+3*sqrt(2)/2})--({10.5 + 3*sqrt(2)/2},{2+5*sqrt(2)/2});
\draw[thick, smooth] ({10.5+2*sqrt(2)},{3+2*sqrt(2)}) arc (90:135:1);
\draw[thick, smooth] ({10.5+2*sqrt(2)},{1+2*sqrt(2)}) arc (-90:-135:1);
\node[label={left: \fs ${\color{red} \shortrightarrow} \ x$}] at (10, {3+2*sqrt(2)}) {};
\node[label={left: \fs ${\color{red} \shortrightarrow} \ y$}] at (10, {1+2*sqrt(2)}) {};
\node[label={below: \fs ${\color{red} \begin{array}{c}  \shortuparrow \\ {\color{black} \b{y}} \end{array} }$}] at ({2*sqrt(2)+11},{2*sqrt(2)-1}) {};
\node[label={below: \fs ${\color{red} \begin{array}{c}  \shortuparrow \\ {\color{black} \b{x}} \end{array} }$}] at ({2*sqrt(2)+13},{2*sqrt(2)-1}) {};
\node at ({11+2*sqrt(2)},{3+2*sqrt(2)}) {$\bullet$};
\node at ({13+2*sqrt(2)},{1+2*sqrt(2)}) {$\bullet$};
\end{tikzpicture}

\end{tabular}
\caption{Reflection equations for corner vertices. Notice that, due to the orientation of the lattice lines, the equation on the right is not simply a 90\degree\ rotation of that on the left.}
\label{fig:corner-YB}
\end{figure}
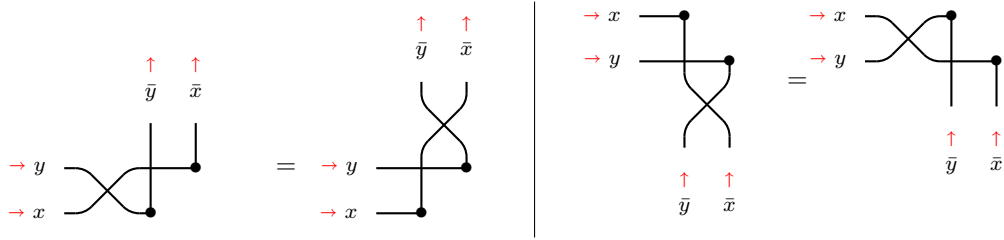 

\subsection{Partition function on doubly off-diagonally symmetric lattice}

We now study the six-vertex model on a doubly off-diagonally symmetric domain, as shown in Figure \ref{fig:OO}. The corresponding alternating sign matrices have off-diagonal/off-anti-diagonal symmetry \cite{kup2}.


\begin{figure}[H]
\begin{tikzpicture}[scale=0.6]

\foreach\x in {1,...,4}
{\draw[thick] (0,\x-1) -- (\x,\x-1);
\node[label={left: \fs ${\color{red} \shortrightarrow} \ x_{\x}$}] at (0,4-\x) {};
\draw[thick] (0,\x+3) -- (5-\x,\x+3);
\node[label={left: \fs ${\color{red} \shortrightarrow} \ x_{\x}$}] at (0,\x+3) {};
\node at (\x,\x-1) {$\bullet$}; \node at (5-\x,\x+3) {$\bullet$};
\node[label={below: \fs ${\color{red} \begin{array}{c}  {\color{black} \b{x}_{\x}} \\ \shortuparrow \end{array} }$}] at (5-\x,4-\x) {};}


\foreach\y in {0,...,7}
\node at (0.5,\y) {\r};

\foreach\x in {1,...,4}
\draw[thick, smooth] (\x,\x-1) -- (\x,8-\x);
\end{tikzpicture}
\caption{The partition function $Z_{\rm OO}$ of the six-vertex model with doubly 
off-diagonally symmetric boundary conditions, in the case $n=2$. Horizontal lattice lines are oriented from left to right, vertical lattice lines are oriented from bottom to top, and the variables associated to the vertical lines are reciprocated.}
\label{fig:OO}
\end{figure}
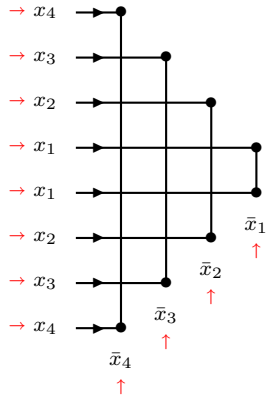

\begin{lem}
\label{lem:OO-prop}
The partition function 
$Z^{(2n)}_{\rm OO} = Z_{\rm OO}(x_1,\dots,x_{2n};t)$ as defined in Figure \ref{fig:OO} satisfies four properties:
\begin{enumerate}[label=\bf\arabic*.]

\item Multiplying by $\prod_{1\leq i<j \leq 2n} (1-x_i x_j)^2$, it is a polynomial in 
$x_{2n}$ of degree $4n-3$.

\item It is symmetric in $\{x_1,\dots,x_{2n}\}$.

\item It obeys the recursion relations
\begin{align}
\label{OO-rec1}
Z^{(2n)}_{\rm OO}
\Big|_{x_{2n} = \b{x}_{2n-1}/t}
&=
- t^{4n-5/2}
Z^{(2n-2)}_{\rm OO},
\\
\label{OO-rec2}
\lim_{x_{2n} \rightarrow \b{x}_{2n-1}}
\Big(
(1-x_{2n-1} x_{2n})^2
Z^{(2n)}_{\rm OO}
\Big)
&= 
(1-t)^2
\prod_{i=1}^{2n-2}
\frac{(1-tx_i \b{x}_{2n-1})^2}{(1-x_i \b{x}_{2n-1})^2}
\frac{(1-tx_i x_{2n-1})^2}{(1-x_i x_{2n-1})^2}
Z^{(2n-2)}_{\rm OO}.
\end{align}

\item When $n=1$, it is given explicitly by
\begin{align*}
Z^{(2)}_{\rm OO}
=
\frac{(1-t)(1+\sqrt{t})(1 - \sqrt{t} x_1 x_2)}
{(1-x_1 x_2)^2}.
\end{align*}

\end{enumerate}
\end{lem}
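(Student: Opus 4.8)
The plan is to verify the four properties directly from the lattice definition of Figure \ref{fig:OO}, in close parallel with the proof of Lemma \ref{lem:HT-prop} for the half-turn case; the only genuinely new ingredient is the presence of the corner vertices of Figure \ref{fig:corner} and the corner reflection relations of Figure \ref{fig:corner-YB}. For polynomiality (property {\bf 1}), I would first renormalize by attaching to the vertex at the intersection of lines $i$ and $j$ the factor $(1-x_i x_j)$; since the vertical rapidities are reciprocated, each weight in \eref{boltz} depends on the product $x_i x_j$ rather than on a ratio, and after this renormalization every bulk weight becomes a polynomial of degree at most one in each rapidity, the only degree-zero exception being $c_+$, which renormalizes to the constant $(1-t)$. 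The corner vertices carry weight $1$ and contribute nothing. The rapidity $x_{2n}$ labels the two outermost horizontal lines of the staircase; counting the renormalized weights that depend on it and noting via the ice rule that exactly one of them is a degree-zero $c_+$ vertex, one obtains total degree $4n-3$ in $x_{2n}$ (the analogue of degree $2n-1$ in Lemma \ref{lem:HT-prop}).

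Property {\bf 2} (symmetry) is the standard integrability argument, but here it must combine two moves: the Yang--Baxter equation of Figure \ref{fig:YB}, used to commute adjacent lattice lines through the bulk, together with the corner reflection equations of Figure \ref{fig:corner-YB}, which transport a pair of rapidities past the corner turns. Because each $x_i$ labels two lines of the doubly off-diagonal lattice, exchanging $x_i \leftrightarrow x_{i+1}$ requires sliding both copies; the two reflection relations are precisely what carry the incoming rapidity lines through the corner nodes, after which repeated use of Yang--Baxter establishes invariance under the transposition.

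For the recursions (property {\bf 3}) I would use the freezing technique. Setting $x_{2n} = \b{x}_{2n-1}/t$ gives $x_{2n} x_{2n-1} = 1/t$, which makes the $a_+$ weight at the distinguished node vanish and forces a $c_+$ vertex there; by the ice rule this choice propagates into a fully frozen strip along the two outermost lines, whose renormalized Boltzmann weights multiply to $-t^{4n-5/2}$, while the unfrozen remainder is exactly $Z^{(2n-2)}_{\rm OO}$, yielding \eref{OO-rec1}. For \eref{OO-rec2}, multiplying by $(1-x_{2n-1}x_{2n})^2$ and letting $x_{2n}\to\b{x}_{2n-1}$ isolates the configurations carrying a $c_+$ vertex at the distinguished node (the competing $b_+$ configuration being suppressed), freezing a different strip whose weights accumulate to the stated factor $(1-t)^2\prod_{i=1}^{2n-2}\frac{(1-tx_i\b{x}_{2n-1})^2}{(1-x_i\b{x}_{2n-1})^2}\frac{(1-tx_ix_{2n-1})^2}{(1-x_ix_{2n-1})^2}$. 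The base case (property {\bf 4}) is handled by enumerating by hand the finitely many admissible configurations of the $n=1$ lattice and summing their weights, exactly as in the $n=1$ computation of Lemma \ref{lem:HT-prop}.

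The main obstacle I anticipate is the symmetry argument: unlike the half-turn lattice, the doubly off-diagonal geometry forces every rapidity through two corner turns, so establishing property {\bf 2} demands a careful and correctly oriented interleaving of both corner reflection equations of Figure \ref{fig:corner-YB} (which, as its caption warns, are not related by a simple rotation) with the bulk Yang--Baxter moves. A secondary subtlety is bookkeeping the exact shape of the frozen regions in the two recursions: because the domain is a staircase terminated by corners, the frozen strips are not rectangular, and one must check that the accumulated weights collapse to the clean factors $-t^{4n-5/2}$ and the product in \eref{OO-rec2}.
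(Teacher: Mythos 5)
Your proposal is correct and follows essentially the same route as the paper's proof: renormalizing by $(1-x_ix_j)$ per crossing and using the ice rule (exactly one $c_+$ on the line carrying $x_{2n}$) for the degree count $4n-3$, combining the Yang--Baxter equation with both corner reflection equations for symmetry, freezing arguments at the distinguished bulk vertex — forced to be $c_+$ by the vanishing of $a_+$ at $x_{2n}x_{2n-1}=1/t$, respectively by the suppression of $b_+$ in the limit $x_{2n}\to\b{x}_{2n-1}$ — for the two recursions, and direct enumeration of the two admissible configurations for the base case. The only slight imprecision is locating the $x_{2n}$-dependence on the two outermost horizontal segments: those segments carry no bulk vertices, and all vertices involving $x_{2n}$ actually sit on the left-most vertical line (carrying $\b{x}_{2n}$), but this does not affect the argument.
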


\begin{proof}

The proof is again based closely on the lattice definition of $Z_{\rm OO}$. 

\begin{enumerate}[label=\bf\arabic*.]

\item Multiplying the partition function by $\prod_{1\leq i<j \leq 2n} (1-x_i x_j)^2$ is equivalent to renormalizing every vertex by $(1-x_i x_j)$. This makes all Boltzmann weights degree-1 polynomials in $x_i$, with the sole exception of the $c_{+}$ vertex (which is a constant). Examining the left-most vertical line of Figure \ref{fig:OO}, which gives rise to all $x_{2n}$ dependence, we see that exactly one $c_{+}$ vertex will occur on this line. Hence $Z^{(2n)}_{\rm OO}$ is a polynomial in $x_{2n}$ of degree $4n-3$. 

\item Symmetry in the $x$ variables can be deduced using both the Yang--Baxter equation and the two corner reflection equations in Figure \ref{fig:corner-YB}. These equations, in combination, allow for any two lattice lines bearing the labels $x_i$ and $x_j$ to be interchanged.

\item Consider the top-most bulk vertex in Figure \ref{fig:OO}. Setting 
$x_{2n} = \b{x}_{2n-1}/t$ rules out the possibility that this is an $a_{+}$ vertex. It must therefore be a $c_{+}$ vertex, and this causes a subset of the vertices to be in a frozen configuration, as shown on the left of Figure \ref{fig:OO-freeze}. The total contribution from these frozen vertices is the weight 
$-t (\sqrt{t})^{8n-7} = -t^{4n-5/2}$, while the surviving region is simply $Z^{(2n-2)}_{\rm OO}$. Hence we obtain equation \eref{OO-rec1}.


\begin{figure}[H]
\begin{tabular}{cc|cc}
\begin{tikzpicture}[scale=0.6]
\foreach\x in {1,...,4}
{\draw[thick] (0,\x-1) -- (\x,\x-1);
\draw[thick] (0,\x+3) -- (5-\x,\x+3);
\node at (\x,\x-1) {$\bullet$}; \node at (5-\x,\x+3) {$\bullet$};}

\foreach\x in {1,...,3}
{\node[label={left: \fs ${\color{red} \shortrightarrow} \ x_{\x}$}] at (0,4-\x) {};
\node[label={left: \fs ${\color{red} \shortrightarrow} \ x_{\x}$}] at (0,\x+3) {};
\node[label={below: \fs ${\color{red} \begin{array}{c}  {\color{black} \b{x}_{\x}} \\ \shortuparrow \end{array} }$}] at (5-\x,4-\x) {};}

\node[label={left: \fs ${\color{red} \shortrightarrow} \ \b{x}_{3}/t$}] at (0,0) {};
\node[label={left: \fs ${\color{red} \shortrightarrow} \ \b{x}_{3}/t$}] at (0,7) {};
\node[label={below: \fs ${\color{red} \begin{array}{c}  {\color{black} t x_3 } \\ \shortuparrow \end{array} }$}] at (1,0) {};


\foreach\y in {0,...,7}
\node at (0.5,\y) {\r};

\node at (1.5,6) {\l};
\foreach\y in {1,...,5}
\node at (1.5,\y) {\r};

\foreach\y in {2,...,5}
\node at (2.5,\y) {\r};

\foreach\x in {1,...,4}
\draw[thick, smooth] (\x,\x-1) -- (\x,8-\x);


\node at (1,0.5) {\d};

\foreach\y in {2,...,6}
{\foreach\x in {1,2}
\node at (\x,-0.5+\y) {\d};}

\node at (1,6.5) {\u};

\end{tikzpicture}
&
&
&
\begin{tikzpicture}[scale=0.6]
\foreach\x in {1,...,4}
{\draw[thick] (0,\x-1) -- (\x,\x-1);
\draw[thick] (0,\x+3) -- (5-\x,\x+3);
\node at (\x,\x-1) {$\bullet$}; \node at (5-\x,\x+3) {$\bullet$};}

\foreach\x in {1,...,3}
{\node[label={left: \fs ${\color{red} \shortrightarrow} \ x_{\x}$}] at (0,4-\x) {};
\node[label={left: \fs ${\color{red} \shortrightarrow} \ x_{\x}$}] at (0,\x+3) {};
\node[label={below: \fs ${\color{red} \begin{array}{c}  {\color{black} \b{x}_{\x}} \\ \shortuparrow \end{array} }$}] at (5-\x,4-\x) {};}

\node[label={left: \fs ${\color{red} \shortrightarrow} \ \b{x}_{3}$}] at (0,0) {};
\node[label={left: \fs ${\color{red} \shortrightarrow} \ \b{x}_{3}$}] at (0,7) {};
\node[label={below: \fs ${\color{red} \begin{array}{c}  {\color{black} x_3 } \\ \shortuparrow \end{array} }$}] at (1,0) {};


\foreach\y in {0,...,7}
\node at (0.5,\y) {\r};

\foreach\y in {2,...,6}
\node at (1.5,\y) {\r};
\node at (1.5,1) {\l};

\foreach\y in {2,...,5}
\node at (2.5,\y) {\r};

\foreach\x in {1,...,4}
\draw[thick, smooth] (\x,\x-1) -- (\x,8-\x);


\node at (1,0.5) {\d};

\foreach\y in {2,...,6}
{\foreach\x in {1,2}
\node at (\x,-0.5+\y) {\u};}

\node at (1,6.5) {\u};

\end{tikzpicture}
\end{tabular}
\caption{The two recursion relations satisfied by $Z_{\rm OO}$, in the case $n=2$. On the left, the freezing procedure which produces equation \eref{OO-rec1}. On the right, the freezing procedure which produces equation \eref{OO-rec2}.}
\label{fig:OO-freeze}
\end{figure}
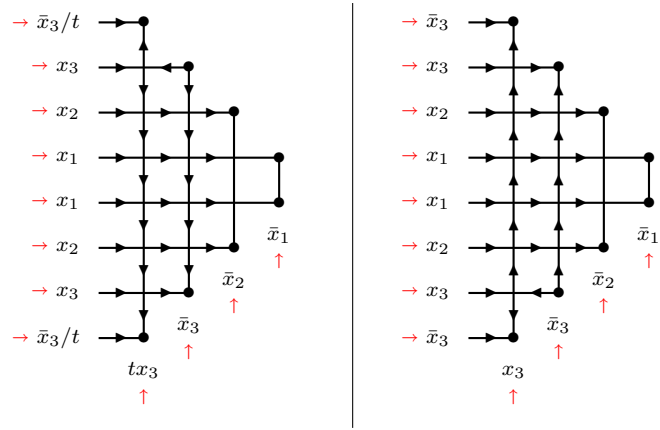
A similar argument applies to the bottom-most bulk vertex in Figure \ref{fig:OO}. After multiplying by $(1-x_{2n-1} x_{2n})^2$ and sending $x_{2n} \rightarrow \b{x}_{2n-1}$ this cannot be a $b_{+}$ vertex, meaning that it must be a $c_{+}$ vertex. This causes some of the vertices to freeze, as shown on the right of Figure 
\ref{fig:OO-freeze}, and they contribute a total weight of
\begin{align*}
(1-t)^2
\prod_{i=1}^{2n-2}
\frac{(1-tx_i \b{x}_{2n-1})^2}{(1-x_i \b{x}_{2n-1})^2}
\frac{(1-tx_i x_{2n-1})^2}{(1-x_i x_{2n-1})^2},
\end{align*}
with the non-frozen part of the lattice representing $Z^{(2n-2)}_{\rm OO}$. Hence we recover equation 
\eref{OO-rec2}.

\item Calculating the $n=1$ case explicitly, we find that
\begin{center}
\begin{tikzpicture}[scale=0.6]
\node at (-3,1.5) {$Z^{(2)}_{\rm OO}\ \ \ =$};
\foreach\x in {1,...,2}
{\draw[thick, smooth] (0,\x-1) -- (\x,\x-1);
\node[label={left: \fs ${\color{red} \shortrightarrow} \ x_{\x}$}] at (0,2-\x) {};
\draw[thick, smooth] (0,\x+1) -- (3-\x,\x+1);
\node[label={left: \fs ${\color{red} \shortrightarrow} \ x_{\x}$}] at (0,1+\x) {};
\node at (\x,\x-1) {$\bullet$}; \node at (3-\x,\x+1) {$\bullet$};
\node[label={below: \fs ${\color{red} \begin{array}{c}  {\color{black} \b{x}_{\x}} \\ \shortuparrow \end{array} }$}] at (3-\x,2-\x) {};}
\foreach\x in {1,...,4}
\node at (0.5,\x-1) {$\r$};
\node at (1.5,2) {\l};
\node at (1.5,1) {\r};
\foreach\x in {1,...,2}
\draw[thick, smooth] (\x,\x-1) -- (\x,4-\x);
\node at (1,0.5) {\d};
\node at (1,1.5) {\d};
\node at (1,2.5) {\u};
\node at (2,1.5) {\d};
\node at (3,1.5) {$+$};
\end{tikzpicture}
\quad
\begin{tikzpicture}[scale=0.6]
\foreach\x in {1,...,2}
{\draw[thick, smooth] (0,\x-1) -- (\x,\x-1);
\node[label={left: \fs ${\color{red} \shortrightarrow} \ x_{\x}$}] at (0,2-\x) {};
\draw[thick, smooth] (0,\x+1) -- (3-\x,\x+1);
\node[label={left: \fs ${\color{red} \shortrightarrow} \ x_{\x}$}] at (0,1+\x) {};
\node at (\x,\x-1) {$\bullet$}; \node at (3-\x,\x+1) {$\bullet$};
\node[label={below: \fs ${\color{red} \begin{array}{c}  {\color{black} \b{x}_{\x}} \\ \shortuparrow \end{array} }$}] at (3-\x,2-\x) {};}
\foreach\x in {1,...,4}
\node at (0.5,\x-1) {$\r$};
\node at (1.5,2) {\r};
\node at (1.5,1) {\l};
\foreach\x in {1,...,2}
\draw[thick, smooth] (\x,\x-1) -- (\x,4-\x);
\node at (1,0.5) {\d};
\node at (1,1.5) {\u};
\node at (1,2.5) {\u};
\node at (2,1.5) {\u};
\end{tikzpicture}
\end{center}
\noindent Substituting the explicit expression for the Boltzmann weights, we obtain
\begin{align*}
Z^{(2)}_{\rm OO}
=
\frac{(1-t)\sqrt{t}}{1-x_1 x_2}
+
\frac{1-tx_1 x_2}{1-x_1 x_2}
\frac{(1-t)}{1-x_1 x_2}
=
\frac{(1-t)(1+\sqrt{t})(1 - \sqrt{t} x_1 x_2)}
{(1-x_1 x_2)^2}.
\end{align*} 

\end{enumerate}

\end{proof}

\begin{thm}[Kuperberg]
The partition function on the doubly off-diagonally symmetric lattice is given by a product of Pfaffians:
\begin{multline}
\label{ZOO}
Z_{\rm OO}(x_1,\dots,x_{2n};t)
=
\\
\prod_{1 \leq i<j \leq 2n}
\frac{(1-t x_i x_j)^2}{(x_i-x_j)^2}
\pf_{1\leq i < j \leq 2n}
\left[
\frac{(1-t)(x_i-x_j)}{(1-x_i x_j)(1-t x_i x_j)}
\right]
\pf_{1\leq i < j \leq 2n}
\left[
\frac{(1+\sqrt{t} - (\sqrt{t} + t) x_i x_j)(x_i-x_j)}{(1-x_i x_j)(1-t x_i x_j)}
\right].
\end{multline}

\end{thm}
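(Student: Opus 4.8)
The plan is to take the right-hand side of \eref{ZOO} as an Ansatz for $Z^{(2n)}_{\rm OO}$ and to verify that it obeys the four properties of Lemma \ref{lem:OO-prop}; since these properties are uniquely determining (by the same Lagrange interpolation argument that settles the half-turn case \eref{ZHT} via Lemma \ref{lem:HT-prop}), this identifies the Ansatz with the partition function. The organizing observation, exactly parallel to the factorization of $Z_{\rm HT}$, is that the Ansatz splits as $\tilde Z^{(2n)} = F_1 \cdot F_2$, where
\[
F_k = \prod_{1 \leq i<j \leq 2n} \frac{1-t x_i x_j}{x_i - x_j}\, \pf_{1\leq i < j \leq 2n}\left[\frac{(x_i - x_j)(1-u_k + (u_k - t) x_i x_j)}{(1-x_i x_j)(1-t x_i x_j)}\right],
\]
with $u_1 = t$ and $u_2 = -\sqrt{t}$. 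For $u_1 = t$ the numerator collapses to $1-t$, so $F_1$ is precisely the right-hand side of \eref{HL-OSASM} and hence equals $\mathcal{G}_{2n}$ by Theorem \ref{littlewood-thm}; for $u_2 = -\sqrt{t}$ the numerator becomes $1+\sqrt{t} - (\sqrt{t} + t) x_i x_j$, so $F_2$ is the second factor of \eref{ZOO}. Both $F_1$ and $F_2$ are thus specializations of the single $u$-dependent family whose Pfaffian and recursive structure was analysed in the proof of \eref{pfaff-id}, and I would derive every property of $\tilde Z$ from the corresponding property of the two factors.

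Properties \textbf{1} and \textbf{2} are the routine ones. Symmetry is immediate: the prefactor $\prod_{i<j}(1-t x_i x_j)^2/(x_i-x_j)^2$ is manifestly symmetric, while each Pfaffian has antisymmetric entries and so changes sign under any transposition of the variables, whence the product of the two Pfaffians is symmetric. For polynomiality I would expand each Pfaffian along its last row and column: the variable $x_{2n}$ enters only through the entries in position $2n$, each of which becomes a polynomial of controlled degree once the common denominator $\prod_{i<2n}(1-x_i x_{2n})(1-t x_i x_{2n})$ is cleared. Tracking these degrees against the $x_{2n}$-degree of the prefactor, and cancelling the spurious denominators against the Pfaffian poles, yields that $\prod_{i<j}(1-x_i x_j)^2\, \tilde Z^{(2n)}$ is a polynomial in $x_{2n}$ of degree exactly $4n-3$.

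The recursion relations of property \textbf{3} are obtained by multiplying the recursions of the two factors, which in turn follow by specializing the recursions of $\mathcal{L}_N$ established in the proof of \eref{pfaff-id} to $u_1 = t$ and $u_2 = -\sqrt{t}$. At $x_{2n} = \b{x}_{2n-1}/t$ that recursion gives $F_k \big|_{x_{2n}=\b{x}_{2n-1}/t} = -u_k t^{2n-2} F_k^{(2n-2)}$, so that $F_1$ contributes the coefficient $-t^{2n-1}$ (in agreement with property \textbf{3} of Theorem \ref{littlewood-thm}) and $F_2$ contributes $\sqrt{t}\, t^{2n-2} = t^{2n-3/2}$; their product is $-t^{4n-5/2}$, which reproduces \eref{OO-rec1}. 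At $x_{2n} \to \b{x}_{2n-1}$ the relevant recursion of $\mathcal{L}_N$ is independent of $u$, and each factor has a simple pole at which $\lim_{x_{2n}\to\b{x}_{2n-1}}(1-x_{2n-1}x_{2n}) F_k = (1-t)\prod_{i=1}^{2n-2}\frac{(1-t x_i x_{2n-1})(1-t x_i \b{x}_{2n-1})}{(1-x_i x_{2n-1})(1-x_i \b{x}_{2n-1})} F_k^{(2n-2)}$; multiplying the two limits and restoring the factor $(1-x_{2n-1}x_{2n})^2$ reproduces the frozen-vertex weight of \eref{OO-rec2}. Property \textbf{4} is verified by direct evaluation at $n=1$, where $F_1$ and $F_2$ are scalar Pfaffians and their product is the stated $Z^{(2)}_{\rm OO}$.

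The proof is therefore a verification rather than a discovery, and its one genuinely delicate point is the degree count in property \textbf{1}: one must confirm that the renormalized Ansatz has degree exactly $4n-3$ in $x_{2n}$, so that the $2n-1$ evaluation points supplied by \eref{OO-rec1} (together with their images under the symmetry in $\{x_1,\dots,x_{2n-1}\}$) and the $2n-1$ further points supplied by the double-pole limit \eref{OO-rec2} furnish the $4n-2$ conditions needed to pin the polynomial down uniquely. Everything else reduces, via the factorization $\tilde Z^{(2n)} = F_1 F_2$, to recursions already in hand from Theorem \ref{littlewood-thm} and the proof of \eref{pfaff-id}.
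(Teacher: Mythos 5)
Your proposal is correct and takes essentially the same route as the paper, whose proof consists precisely of taking the product of Pfaffians as an Ansatz and checking that it satisfies the four uniquely-determining properties of Lemma \ref{lem:OO-prop}. Your more detailed verification via the factorization $\tilde Z^{(2n)} = F_1 F_2$ with $u_1=t$, $u_2=-\sqrt{t}$ is sound, including the delicate degree count: the $u_1=t$ factor has Pfaffian entries with numerator $(1-t)(x_i-x_j)$, one degree lower in $x_ix_j$ than for generic $u$, so its renormalization has degree $2n-2$ rather than $2n-1$ in $x_{2n}$, yielding the required total degree $4n-3$.
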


\begin{proof}
One needs only to check that \eref{ZOO} satisfies the four properties of Lemma \ref{lem:OO-prop}, since these properties uniquely determine $Z_{\rm OO}$. 
\end{proof}

\subsection{$u$-deformed Littlewood identity at Schur and Hall--Littlewood level}
\label{ssec:u-def-OSASM}

In this subsection we will refer to the following Littlewood identities for Schur and 
Hall--Littlewood polynomials \cite{mac}:
\begin{align}
\label{little1}
\sum_{\lambda'\ \text{even}}
s_{\lambda}(x_1,\dots,x_N)
&=
\prod_{1 \leq i<j \leq N}
\left(
\frac{1}
{1-x_i x_j}
\right),
\\
\label{little2}
\sum_{\lambda'\ \text{even}}
b^{\rm el}_{\lambda}(t)
P_{\lambda}(x_1,\dots,x_N;t)
&=
\prod_{1 \leq i<j \leq N}
\left(
\frac{1-t x_i x_j}
{1-x_i x_j}
\right),
\end{align}
where as always we take $N=2n$.

\begin{thm}
\label{u-def-little-s}
The Littlewood identity \eref{little1} for Schur polynomials can be doubly refined, by the introduction of two deformation parameters $t$ and $u$:
\begin{multline}
\label{OSASM-cor1}
\sum_{\lambda'\ {\rm even}}
\
\prod_{i=1}^{n}
(1 - u t^{\lambda_{2i} - 2i + 2n})
s_{\lambda}(x_1,\dots,x_{2n})
=
\\
\prod_{1 \leq i<j \leq 2n}
\frac{1}{(x_i - x_j)}
\pf_{1\leq i < j \leq 2n}
\left[
\frac{(x_i - x_j) (1-u + (u-t) x_i x_j)}
{(1-x_i x_j) (1-t x_i x_j)}
\right].
\end{multline}
\end{thm}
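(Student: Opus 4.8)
The plan is to prove \eqref{OSASM-cor1} directly at the Schur level, using the bialternant formula together with a Pfaffian minor-summation (de Bruijn) identity, in exact analogy with how the Schur Cauchy refinement \eqref{ASM-cor1} follows from Cauchy--Binet. I will not invoke the conjectural Macdonald identity \eqref{mac-osasm}; the argument is self-contained, although one notes as a consistency check that \eqref{OSASM-cor1} is precisely the $q=t$ specialization of \eqref{mac-osasm}.

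First I recall that the condition $\lambda'$ even is equivalent to the parts of $\lambda$ occurring in equal consecutive pairs, $\lambda_{2i-1} = \lambda_{2i}$ for all $1 \le i \le n$. Writing $N=2n$ and introducing the shifted exponents $\mu_j = \lambda_j + N - j$ (so that $\mu_1 > \mu_2 > \cdots > \mu_N \geq 0$), this condition becomes the statement that the $\mu_j$ occur in consecutive pairs $\mu_{2i-1} = \mu_{2i}+1$. Moreover the weight factors as $\prod_{i=1}^n (1 - u t^{\lambda_{2i} - 2i + 2n}) = \prod_{i=1}^n (1 - u t^{\mu_{2i}})$. Using the Weyl/bialternant formula $s_\lambda(x_1,\dots,x_N) = \det_{1 \le i,j \le N}(x_i^{\mu_j})/\Delta(x)$, the left-hand side of \eqref{OSASM-cor1} becomes
\[
\frac{1}{\Delta(x)} \sum_{\substack{\mu_1 > \cdots > \mu_N \geq 0 \\ \mu_{2i-1} = \mu_{2i}+1}} \Big( \prod_{i=1}^n (1 - u t^{\mu_{2i}}) \Big) \det_{1 \le i,j \le N}\big( x_i^{\mu_j} \big).
\]

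The key step is to recognize this weighted, pairing-constrained sum of determinants as a single Pfaffian via the minor-summation formula for Pfaffians (de Bruijn's identity). Concretely, take the $N \times \infty$ matrix with entries $x_i^k$ for $k \geq 0$, and the antisymmetric coupling $\alpha$ supported on consecutive exponents, $\alpha_{l+1,l} = 1 - u t^l$. The Pfaffian of the selected $N \times N$ submatrix of $\alpha$ is nonzero precisely when the chosen exponents split into disjoint consecutive pairs $(\mu_{2i}+1, \mu_{2i})$ --- which is exactly the even-column constraint (the disjointness forcing the spacing $\mu_{2i} \geq \mu_{2i+2}+2$) --- and then equals $\prod_i (1 - u t^{\mu_{2i}})$. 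The minor-summation formula therefore collapses the sum to
\[
\sum_{\substack{\mu_1 > \cdots > \mu_N \geq 0 \\ \mu_{2i-1}=\mu_{2i}+1}} \Big(\prod_{i=1}^n (1 - u t^{\mu_{2i}})\Big) \det_{1\le i,j \le N}\big(x_i^{\mu_j}\big) = \pf_{1 \le i<j \le N}\Big( \sum_{l \geq 0} (1 - u t^l)\big(x_i^{l+1} x_j^l - x_i^l x_j^{l+1}\big) \Big).
\]

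It then remains to evaluate the geometric series inside the Pfaffian. Factoring $x_i^{l+1} x_j^l - x_i^l x_j^{l+1} = (x_i - x_j)(x_i x_j)^l$ and summing two geometric series gives
\[
\sum_{l \geq 0} (1 - u t^l)(x_i - x_j)(x_i x_j)^l = (x_i - x_j)\left( \frac{1}{1 - x_i x_j} - \frac{u}{1 - t x_i x_j} \right) = \frac{(x_i - x_j)\big(1 - u + (u-t) x_i x_j\big)}{(1 - x_i x_j)(1 - t x_i x_j)}.
\]
Combined with $\Delta(x)^{-1} = \prod_{1 \le i<j \le N}(x_i - x_j)^{-1}$, this reproduces exactly the right-hand side of \eqref{OSASM-cor1}. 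The main obstacle is the careful setup of the minor-summation formula: one must verify that the consecutive-exponent coupling $\alpha$ indeed enforces the even-column condition, and keep track of the permutation signs so that the pairing weights combine without spurious factors. Once this bookkeeping is in place the remaining steps are routine, and as a sanity check the case $u=0$ recovers the classical Schur Littlewood identity \eqref{little1} in its Pfaffian form.
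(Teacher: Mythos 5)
Your proposal is correct and follows essentially the same route as the paper's proof: the bialternant formula with shifted exponents $k_i=\lambda_i-i+2n$, the observation that the pairing constraint and weight $\prod_i(1-ut^{k_{2i}})$ assemble into a Pfaffian supported on consecutive exponents, the Pfaffian analogue of Cauchy--Binet (your minor-summation/de Bruijn identity) to collapse the sum, and the geometric-series evaluation of the entries. The bookkeeping you flag as the "main obstacle" is exactly the step the paper also handles via its explicit factorization $\pf[\delta_{k_i,k_j+1}(1-ut^{k_j})]=\prod_{i\ {\rm even}}\delta_{k_{i-1},k_i+1}(1-ut^{k_i})$, so nothing further is missing.
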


\begin{proof}
Using the Weyl determinant formula for $s_{\lambda}$ and multiplying equation \eref{OSASM-cor1} by the Vandermonde $\prod_{1 \leq i<j \leq 2n} (x_i - x_j)$, the left hand side may be written as
\begin{align*}
\sum_{\lambda'\ {\rm even}}
\
\prod_{i=1}^{n}
(1-ut^{\lambda_{2i} - 2i + 2n})
\det_{1\leq i,j \leq 2n}
\left[
x_j^{\lambda_i-i+2n}
\right]
=
\sum_{\substack{k_1 > \cdots > k_{2n} \geq 0 \\ k_{2i-1} = k_{2i}+1}}
\
\prod_{i=1}^{n}
(1-ut^{k_{2i}})
\det_{1\leq i,j \leq 2n}
\left[
x_j^{k_i}
\right],
\end{align*}
where we have made the change in summation indices $k_i = \lambda_i - i + 2n$. Owing to the Pfaffian factorization
\begin{align*}
\pf_{1\leq i < j \leq 2n}
\left[
\delta_{k_i,k_j+1}
(1-u t^{k_j})
\right]
=
\prod_{i = 2,4,6,\dots}^{2n}
\left(
\delta_{k_{i-1},k_i+1}
(1-u t^{k_i})
\right)
\end{align*}
we find that
\begin{multline*}
\sum_{\lambda'\ {\rm even}}
\
\prod_{i=1}^{n}
(1-ut^{\lambda_{2i} - 2i + 2n})
\det_{1\leq i,j \leq 2n}
\left[
x_j^{\lambda_i-i+2n}
\right]
=
\sum_{k_1 > \cdots > k_{2n} \geq 0}
\
\pf_{1\leq i < j \leq 2n}
\left[
\delta_{k_i,k_j+1}
(1-u t^{k_j})
\right]
\det_{1\leq i,j \leq 2n}
\left[
x_j^{k_i}
\right]
\\
=
\pf_{1\leq i < j \leq 2n}
\left[
\sum_{0 \leq k < l}
\delta_{k,l+1}
(1-ut^l)
(x_i^k x_j^l - x_i^l x_j^k)
\right]
=
\pf_{1\leq i < j \leq 2n}
\left[
\sum_{l=0}^{\infty}
(1-ut^l)
(x_i^{l+1} x_j^l - x_i^l x_j^{l+1})
\right], 
\end{multline*}
where we have used the Pfaffian analogue of the Cauchy--Binet identity to produce the second equality. Taking the formal power series expansion of $(x_i-x_j)(1-u+(u-t)x_i x_j)/((1-x_i x_j)(1-t x_i x_j))$, we obtain precisely the entries of the final Pfaffian.

\end{proof}

\begin{thm}
\label{u-def-little-hl}
The Littlewood identity \eref{little2} for Hall--Littlewood polynomials can be refined by the introduction of a single deformation parameter $u$\footnote{Equation \eref{OSASM-cor2} was originally conjectured by O. Warnaar in a private communication, after our first paper \cite{bw} appeared. This communication motivated much of the work that was performed in the current paper.}: 
\begin{multline}
\label{OSASM-cor2}
\sum_{\lambda'\ {\rm even}}
\
\prod_{j\ {\rm even}}^{m_0(\lambda)}
(1-u t^{j-2})
b^{\rm el}_{\lambda}(t)
P_{\lambda}(x_1,\dots,x_{2n};t)
=
\\
\prod_{1 \leq i<j \leq 2n}
\left(
\frac{1-t x_i x_j}{x_i - x_j}
\right)
\pf_{1\leq i < j \leq 2n}
\left[
\frac{(x_i - x_j) (1-u + (u-t) x_i x_j)}
{(1-x_i x_j) (1-t x_i x_j)}
\right].
\end{multline}
\end{thm}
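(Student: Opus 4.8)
The plan is to prove \eqref{OSASM-cor2} by the Izergin--Korepin/Lagrange-interpolation method used for Theorem \ref{littlewood-thm}, now carrying the extra $u$-weight. Write $\mathcal{H}_N(x_1,\dots,x_N)$ for the left-hand side and show that it and the right-hand side satisfy a common, uniquely-determining list of properties: \textbf{1.} symmetry in $\{x_1,\dots,x_N\}$; \textbf{2.} after renormalization by $\prod_{1\le i<j\le N}(1-x_ix_j)$, polynomiality in $x_N$ of degree $N-1$; \textbf{3.} the two recursions $\mathcal{H}_N|_{x_N=\b{x}_{N-1}/t}=-u t^{N-2}\mathcal{H}_{N-2}$ and $\lim_{x_N\to\b{x}_{N-1}}(1-x_{N-1}x_N)\mathcal{H}_N=(1-t)\prod_{i=1}^{N-2}\frac{(1-tx_ix_{N-1})(1-tx_i\b{x}_{N-1})}{(1-x_ix_{N-1})(1-x_i\b{x}_{N-1})}\mathcal{H}_{N-2}$; and \textbf{4.} the base case $\mathcal{H}_2=(1-u+(u-t)x_1x_2)/(1-x_1x_2)$. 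Property \textbf{1} is immediate. For the right-hand side, all of \textbf{2}--\textbf{4} are read off from equation \eqref{pfaff-id} and its proof: writing the right-hand side as $\mathcal{L}_N/\prod_{i<j}(1-x_ix_j)$, where $\mathcal{L}_N$ is the polynomial appearing in that proof, the two recursions established there for $\mathcal{L}_N$ translate directly into those above (and both collapse to the single recursion of Theorem \ref{littlewood-thm} when $u=t$, where $1-u+(u-t)x_ix_j$ becomes constant and the degree drops to $N-2$).

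First I would establish polynomiality \textbf{2} for $\mathcal{H}_N$. As in Section \ref{2-poly}, I apply the branching rule \eqref{branch} and the Pieri rule \eqref{pieri} to extract the $x_N$-dependence, reducing the claim to the polynomiality (of degree $N-1$) of a multiplicity sum in which the $u$-weight $\prod_{j\ \mathrm{even}}^{m_0(\lambda)}(1-u t^{j-2})$ rides along as a scalar depending only on the number of trailing zeros. The degree is now $N-1$ rather than $N-2$ precisely because the $u$-deformation removes the cancellation present at $u=t$; the base computation $\prod_{i<j}(1-x_ix_j)\mathcal{H}_2=1-u+(u-t)x_1x_2$ already displays this jump. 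Next I would prove the first recursion in \textbf{3}. Setting $x_N=\b{x}_{N-1}/t$ and branching twice, one isolates the coefficient $\mathcal{D}(\nu)$ of each $P_\nu(x_1,\dots,x_{N-2};t)$ exactly as in Section \ref{2-rec}; the recurrences \eqref{recur-3}--\eqref{recur-4} for the partial coefficients carry over essentially verbatim, the only new bookkeeping being how the zero-part $u$-factor changes when two variables are removed. This produces the constant $-u t^{N-2}$, specializing to the $-t^{N-1}$ of Theorem \ref{littlewood-thm} at $u=t$.

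The main obstacle is supplying the extra interpolation datum forced by the degree being $N-1$: the first recursion together with symmetry determines $\mathcal{H}_N$ only at the $N-1$ points $x_N=\b{x}_j/t$, one short of fixing a degree-$(N-1)$ polynomial. I would close the gap with the second, residue-type recursion, which is genuinely new relative to Theorem \ref{littlewood-thm}. For $\mathcal{H}_N$ this requires computing the residue of the Hall--Littlewood sum at the simple pole $x_N=\b{x}_{N-1}$: since the renormalized sum is a polynomial, $\mathcal{H}_N$ has only simple poles along $x_ix_j=1$, and one must show that the residue along $x_{N-1}x_N=1$ collapses the double branching back onto $\mathcal{H}_{N-2}$ with the stated rational prefactor. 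Extracting this residue directly from the branching/Pieri combinatorics --- rather than reading it off the Pfaffian --- is where the real work lies; an equivalent route is to compute instead the top-degree coefficient of $\prod_{i<j}(1-x_ix_j)\mathcal{H}_N$ in $x_N$ (equivalently the $x_N\to\infty$ limit of $\mathcal{H}_N$), which supplies the missing condition. With \textbf{1}--\textbf{4} in hand for both sides, Lagrange interpolation in $x_N$ --- using symmetry to promote each recursion to a full set of evaluation points --- forces $\mathcal{H}_N$ to equal the right-hand side, and induction on $n$ from the base case $\mathcal{H}_2$ completes the proof.
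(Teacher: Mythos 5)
Your scaffolding is the same as the paper's: the paper also proves \eref{OSASM-cor2} by the Izergin--Korepin/Lagrange-interpolation method, with symmetry, polynomiality of degree $2n-1$ after renormalization, the recursion $\mathcal{G}_{2n}|_{x_{2n}=\b{x}_{2n-1}/t}=-ut^{2n-2}\mathcal{G}_{2n-2}$, and the base case $\mathcal{G}_2$. You also correctly diagnose the central difficulty: since the degree jumped to $2n-1$, the single recursion plus symmetry fixes the polynomial at only $2n-1$ points, one short. The gap is that the condition you propose to supply the missing datum --- the residue-type recursion at $x_{2n-1}x_{2n}=1$ (or, alternatively, the top coefficient as $x_{2n}\to\infty$) --- is exactly the step you never carry out. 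On the Pfaffian side it is routine, but on the Hall--Littlewood side it requires evaluating the residue of an infinite sum of Hall--Littlewood polynomials at a pole of the summed series; you concede this ``is where the real work lies'' and leave it there, so the proof is incomplete at its most critical point, and there is no evidence this residue computation is any more tractable than the identity itself. The paper sidesteps this entirely by choosing a much cheaper extra datum: the evaluation at $x_1=\cdots=x_{2n}=0$ (its property {\bf 4}), which on the Hall--Littlewood side is trivial (only $\lambda=\emptyset$ survives, giving $\prod_{i=1}^n(1-ut^{2i-2})$), and on the Pfaffian side is deduced by setting all $x_i=0$ in the Schur-level identity \eref{OSASM-cor1} times $\prod_{i<j}(1-tx_ix_j)$, Theorem \ref{u-def-little-s} having been proved independently by a Cauchy--Binet/Pfaffian argument. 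If you want to salvage your route, you should either prove your second recursion for the Hall--Littlewood sum or replace it by this zero-evaluation, which costs nothing.

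A secondary weakness is your polynomiality argument (property {\bf 2}). You assert that the $u$-weight $\prod_{j\ {\rm even}}^{m_0(\lambda)}(1-ut^{j-2})$ ``rides along as a scalar depending only on the number of trailing zeros,'' so that the argument of Section \ref{2-poly} applies directly. It does not: that weight couples to $\ell(\lambda)$, so the first-column-stripping identity \eref{alt-form-little}, which reduces the undeformed sum to partitions of full length, no longer applies uniformly, and branching changes $m_0(\lambda)$ from term to term. The paper needs a genuinely new device here: since the left-hand side is a polynomial in $u$ of degree $n$, it suffices to check polynomiality in $x_{2n}$ at the $n+1$ values $u=0$ and $u=t^{2k-2n}$, $1\leq k\leq n$, at which the sum collapses to combinations of fixed-length sums $\sum_{\ell(\lambda)=2i}$; these are then handled by an induction on the propositions $\mathcal{P}_{\leq 2i}$ and $\mathcal{P}_{=2i}$. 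Without an argument of this kind, your property {\bf 2} is unsupported for the deformed sum.
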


\begin{proof}
The idea of the proof is similar to that of Theorem \ref{littlewood-thm}. For the sake of brevity, we will simply point out the places where the proof deviates from the scheme exposed in Section \ref{proof-OSASM}. 

We denote the left hand side of \eref{OSASM-cor2} by $\mathcal{G}_{2n}(x_1,\dots,x_{2n};u)$, and by comparing it with the proposed right hand side we find that necessarily:
\begin{enumerate}[label=\bf\arabic*.] 
\item $\mathcal{G}_{2n}$ is symmetric in $\{x_1,\dots,x_{2n}\}$. 
\item $\mathcal{G}_{2n} \times \prod_{1 \leq i<j \leq 2n} (1-x_i x_j) $ is a polynomial in $x_{2n}$ of degree $2n-1$.
\item $\mathcal{G}_{2n}|_{x_{2n} = 1/(t x_{2n-1})} = -ut^{2n-2} \mathcal{G}_{2n-2}$.
\item $\mathcal{G}_{2n}(0,\dots,0;u) = \prod_{i=1}^{n} (1-ut^{2i-2})$.
\item $\mathcal{G}_{2} = (1-u+(u-t)x_1 x_2)/(1-x_1 x_2)$.
\end{enumerate}
Each of these is an immediate property of the right hand side of \eref{OSASM-cor2}, with the exception of 
{\bf 4}, which at first glance seems to require taking a delicate limit. In fact property {\bf 4} can be quickly deduced by setting all $x_i = 0$ in 
(equation \eref{OSASM-cor1}) $\times \prod_{1 \leq i<j \leq 2n} (1-t x_i x_j)$. It is straightforward to show that these five properties uniquely determine $\mathcal{G}_{2n}$. We remark that the additional property {\bf 4} is needed here, because properties {\bf 1} and {\bf 3} determine $\mathcal{G}_{2n}$ at $2n-1$ values of $x_{2n}$, which only specifies it up to a constant. The value of the constant is fixed by property {\bf 4}.

Hence it is sufficient to show that the left hand side of \eref{OSASM-cor2} satisfies properties 
{\bf 1}--{\bf 5}. Properties {\bf 1} and {\bf 4} are trivial, while {\bf 5} follows from
\begin{align*}
\mathcal{G}_2(x_1,x_2;u)
=
(1-u) P_{(0,0)}
+
\sum_{k=1}^{\infty}
(1-t)
P_{(k,k)}(x_1,x_2;t)
&=
(1-u)
+
(1-t)
\sum_{k=1}^{\infty}
x_1^k x_2^k
=
\frac{1-u+(u-t)x_1 x_2}{1-x_1 x_2}. 
\end{align*}
Turning to property {\bf 2}, it suffices to show that $\prod_{1 \leq i<j \leq 2n} (1- x_i x_j) \mathcal{G}_{2n}$ is a degree $2n-1$ polynomial in $x_{2n}$ at $n+1$ different values of $u$ (since $\mathcal{G}_{2n}$ is a degree $n$ polynomial in $u$). The $n+1$ points that we choose are $u=0$ (for which the claim is trivial, since in that case we obtain the left hand side of the Littlewood identity \eref{littlewood-hl}) and $u=t^{2k-2n}$ for 
$1 \leq k \leq n$. For these latter values of $u$ we find that
\begin{align*}
\mathcal{G}_{2n}(x_1,\dots,x_{2n};t^{2k-2n})
=
\sum_{i=k}^{n}
\prod_{j=i+1}^{n}
(1-t^{2k-2j})
\sum_{\substack{\lambda:\ell(\lambda) = 2i \\ \lambda'\ {\rm even}}}
b_{\lambda}^{\rm el}(t)
P_{\lambda}(x_1,\dots,x_{2n};t),
\end{align*}
so it is sufficient to show that for all $1 \leq i \leq n$,
\begin{align*}
\prod_{1 \leq i<j \leq 2n} (1 - x_i x_j)
\sum_{\substack{\lambda:\ell(\lambda) = 2i \\ \lambda'\ {\rm even}}}
b_{\lambda}^{\rm el}(t)
P_{\lambda}(x_1,\dots,x_{2n};t)
\end{align*} 
is a degree $2n-1$ polynomial in $x_{2n}$. We treat this as a proposition and denote it by 
$\mathcal{P}_{=2i}$. Similarly, we let $\mathcal{P}_{\leq 2i}$ denote this proposition in the case where the sum is taken over partitions $\lambda$ satisfying 
$\ell(\lambda) \leq 2i$. As we showed in Section \ref{2-poly}, 
\begin{align*}
\mathcal{P}_{\leq 2n}\ {\rm true} \implies \mathcal{P}_{= 2n}\ {\rm true}. 
\end{align*}
In fact the arguments presented therein can be repeated (almost verbatim) to deduce that
\begin{align*}
\mathcal{P}_{\leq 2i}\ {\rm true} \implies \mathcal{P}_{= 2i}\ {\rm true} 
\end{align*}
for general $i$. Moreover if $\mathcal{P}_{\leq 2i}$ and $\mathcal{P}_{= 2i}$ are true, then
\begin{multline*}
\prod_{1 \leq i<j \leq 2n} (1 - x_i x_j)
\left(
\sum_{\substack{\lambda:\ell(\lambda) \leq 2i \\ \lambda'\ {\rm even}}}
-
\sum_{\substack{\lambda:\ell(\lambda) = 2i \\ \lambda'\ {\rm even}}}
\right)
b_{\lambda}^{\rm el}(t)
P_{\lambda}(x_1,\dots,x_{2n};t)
\\
=
\prod_{1 \leq i<j \leq 2n} (1 - x_i x_j)
\sum_{\substack{\lambda:\ell(\lambda) \leq 2i-2 \\ \lambda'\ {\rm even}}}
b_{\lambda}^{\rm el}(t)
P_{\lambda}(x_1,\dots,x_{2n};t)
\end{multline*}
is a degree $2n-1$ polynomial in $x_{2n}$, proving $\mathcal{P}_{\leq 2i-2}$ is true. Hence we are able to iterate this string of implications to deduce that 
$\mathcal{P}_{= 2i}$ holds for all $1\leq i \leq n$.

For the recursive property {\bf 3}, one repeats the procedure outlined in Section 
\ref{2-rec}, but with obvious modifications to the formulae to cater for the more general coefficients appearing in the sum \eref{OSASM-cor2}. The strategy is to expand the left hand side of \eref{OSASM-cor2} (evaluated at $x_{2n} = 1/(t x_{2n-1})$) using two applications of the branching rule, and to isolate the coefficients $\mathcal{D}(\nu)$ of 
$P_{\nu}(x_1,\dots,x_{2n-2};t)$ which arise from this expansion. In the case where $\nu$ has only even columns, one finds that 
\begin{align*}
\mathcal{D}(\nu)
=
\sum_{j=1}^{\infty}
\sum_{\delta_j \in \{0,1\}}
t^{-|\delta|}
\prod_{k\ {\rm even}}^{m_0(\lambda)}
(1-u t^{k-2})
b_{\lambda}^{\rm el}(t)
\prod_{\substack{\delta_k = 0 \\ \delta_{k+1} = 1}}
(1-t^{m_k(\nu)-1})
(1-t^{m_k(\nu)}),
\end{align*}
where $\lambda$ is also a partition with even columns, given by $\lambda' = \nu' + 2 \delta$, and by definition $m_0(\lambda) = 2n - \ell(\lambda)$. Calculating $\mathcal{D}(\nu)$ can be done recursively, via the partial coefficients
\begin{align*}
\mathcal{D}_{i,\delta_i}(\nu)
=
\sum_{j=1}^{i-1}
\sum_{\delta_j \in \{0,1\}}
t^{-\sum_{k=1}^{i} \delta_k}
\prod_{k\ {\rm even}}^{m_0(\lambda)}
(1-u t^{k-2})
\prod_{k=1}^{i-1}
\prod_{l\ {\rm even}}^{m_k(\lambda)}
(1-t^{l-1})
\prod_{\substack{1 \leq k \leq i-1 \\ \delta_k = 0 \\ \delta_{k+1} = 1}}
(1-t^{m_k(\nu)-1})
(1-t^{m_k(\nu)}) 
\end{align*}
which differ from those in equation \eref{D-partial-coeff} only by the additional factor 
$\prod_{k\ {\rm even}}^{m_0(\lambda)} (1-ut^{k-2})$. One finds that these coefficients satisfy the recurrences \eref{recur-3} and \eref{recur-4} without any alteration, but with the new initial condition
\begin{align*} 
\mathcal{D}_{1,0}(\nu) = \prod_{k\ {\rm even}}^{2n-\ell(\nu)} (1-ut^{k-2}),
\quad\quad
t \mathcal{D}_{1,1}(\nu) = \prod_{k\ {\rm even}}^{2n-2-\ell(\nu)} (1-ut^{k-2}).
\end{align*} 
The remaining steps in Section \ref{2-rec} then go through in the same way. In particular, it is still true that $\mathcal{D}(\nu) = \mathcal{D}_{\nu_1+1,0}(\nu) - t \mathcal{D}_{\nu_1+1,1}(\nu)$, and the recurrence \eref{recur-final2} remains valid. The initial condition is now 
$\mathcal{D}_{1,0}(\nu) - t \mathcal{D}_{1,1}(\nu) = 
-u t^{m_0(\nu)} \prod_{k\ {\rm even}}^{m_0(\nu)} (1-u t^{k-2})$, where $m_0(\nu) = 2n-2-\ell(\nu)$, so in solving \eref{recur-final2} one obtains
\begin{align*}
\mathcal{D}_{\nu_1+1,0}(\nu)
-
t
\mathcal{D}_{\nu_1+1,1}(\nu)
&=
-u t^{\sum_{i=0}^{\infty} m_i(\nu)}
\prod_{j\ {\rm even}}^{m_0(\nu)}
(1-ut^{j-2})
\prod_{i=1}^{\infty}
\prod_{j\ {\rm even}}^{m_i(\nu)}
(1-t^{j-1}),
\\
&=
-u t^{2n-2}
\prod_{j\ {\rm even}}^{m_0(\nu)}
(1-ut^{j-2})
b_{\nu}^{\rm el}(t),
\end{align*}
which is the required result. In the case where $\nu$ has a column of odd length, the arguments in Section \ref{2-rec} apply (without any change) to prove that $\mathcal{D}(\nu) = 0$. These two evaluations of $\mathcal{D}(\nu)$ complete the proof of property {\bf 3}.

\end{proof}

Theorems \ref{u-def-little-s} and \ref{u-def-little-hl} are important results, since they serve as checks of Conjecture \ref{mac-conj} at the particular values $q=t$ and $q=0$, respectively. Further specialization (of the parameter $u$) leads to various known results. For example in the case of \eref{OSASM-cor2}, setting $u=0$ yields the Littlewood identity for Hall--Littlewood polynomials, whereas setting $u=t$ gives rise to equation \eref{HL-OSASM}. In complete analogy with the previous section, when we set $u=-\sqrt{t}$ we recover one half of the factors in equation \eref{ZOO} for $Z_{\rm OO}$. The remaining factors in \eref{ZOO} are precisely those of the OSASM partition function, on the right hand side of \eref{HL-OSASM}.

\section{$u$-deformed $BC$-type Cauchy identity and double U-turn alternating sign matrices}
\label{sec:UUASM}

In this section we conclude our study of the relationship between refined Cauchy/Littlewood identities and partition functions of the six-vertex model. We present one final example, conjecturing a $u$-deformed version of equation \eref{HL-UASM} and showing that its right hand side contains half of the factors present in the partition function with U-turn boundaries on two sides of the lattice. The remaining factors are those of the UASM partition function, as given by the right hand side of \eref{HL-UASM}. The explicit formula for this partition function, as a product of two determinants, is again due to Kuperberg \cite{kup2}.    

\subsection{Redefinition of Boltzmann weights for bulk vertices}

Throughout this section, it turns out to be most convenient to adopt a more symmetric form for the Boltzmann weights:
\begin{align}
\label{boltz-sym}
a_{\pm}(x,y)
=
\frac{1-t x/y}{1-x/y},
\quad\quad
b_{\pm}(x,y)
=
\sqrt{t},
\quad\quad
c_{\pm}(x,y)
=
\frac{(1-t)\sqrt{x/y}}{1-x/y}.
\end{align}
The only difference between this choice and the previous one \eref{boltz} is that the $c_{\pm}$ vertices are now equal. The Yang--Baxter equation remains satisfied, since the two sets of Boltzmann weights \eref{boltz} and \eref{boltz-sym} are related by a diagonal conjugation of the corresponding $R$-matrix.

\subsection{U-turn vertices, reflection and fish equations}

We introduce a new set of boundary vertices, the {\it U-turn vertices}, as shown in Figure \ref{fig:U}. The 
U-turn Boltzmann weights (denoted $r_{\pm}$ and $t_{\pm}$, since they are situated on the {\it right} and {\it top} edges of the partition function that we subsequently study) are given explicitly by
\begin{align}
\label{U-boltz}
r_{+}(x)
=
\sqrt{p x} - \frac{1}{\sqrt{p x}},
\quad\quad
r_{-}(x)
=
\frac{\sqrt{p}}{\sqrt{x}} - \frac{\sqrt{x}}{\sqrt{p}},
\quad\quad
t_{+}(y)
=
\frac{\sqrt{p t}}{\sqrt{y}}
+
\frac{\sqrt{y}}{\sqrt{p t}},
\quad\quad
t_{-}(y)
=
-
\sqrt{p y}
-
\frac{1}{\sqrt{p y}}.
\end{align}

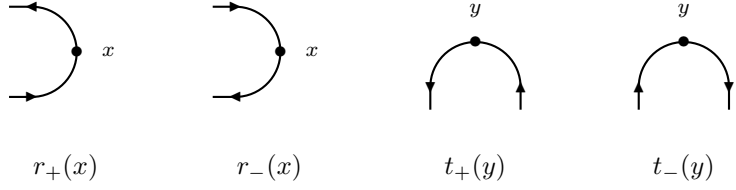
\begin{figure}
\begin{tabular}{cccc}
\begin{tikzpicture}[scale=0.6]
\draw[thick, smooth] ({2*sqrt(2) + 3 + 4},2) arc (90:-90:1);
\draw[thick, smooth] ({2*sqrt(2) + 3 + 3.5},2)--({2*sqrt(2) + 3 + 4},2);
\draw[thick, smooth] ({2*sqrt(2) + 3 + 3.5},0)--({2*sqrt(2) + 3 + 4},0);
\node[label={right: \fs $x$}] at ({2*sqrt(2) + 3 + 5},1) {$\bullet$};
\node at ({2*sqrt(2) + 3 + 4},0) {\r};
\node at ({2*sqrt(2) + 3 + 4},2) {\l};
\end{tikzpicture}
\quad\quad\quad&
\begin{tikzpicture}[scale=0.6]
\draw[thick, smooth] ({2*sqrt(2) + 3 + 4},2) arc (90:-90:1);
\draw[thick, smooth] ({2*sqrt(2) + 3 + 3.5},2)--({2*sqrt(2) + 3 + 4},2);
\draw[thick, smooth] ({2*sqrt(2) + 3 + 3.5},0)--({2*sqrt(2) + 3 + 4},0);
\node[label={right: \fs $x$}] at ({2*sqrt(2) + 3 + 5},1) {$\bullet$};
\node at ({2*sqrt(2) + 3 + 4},0) {\l};
\node at ({2*sqrt(2) + 3 + 4},2) {\r};
\end{tikzpicture}
\quad\quad\quad&
\begin{tikzpicture}[scale=0.6]
\draw[thick, smooth] (6,{0.5+2*sqrt(2)}) arc (180:0:1);
\draw[thick, smooth] (6,{2*sqrt(2)})--(6,{0.5+2*sqrt(2)});
\draw[thick, smooth] (8,{2*sqrt(2)})--(8,{0.5+2*sqrt(2)});
\node[label={above: \fs $y$}] at (7,{1.5+2*sqrt(2)}) {$\bullet$};
\node at (6,{2*sqrt(2)+0.5}) {\d};
\node at (8,{2*sqrt(2)+0.5}) {\u};
\end{tikzpicture}
\quad\quad\quad&
\begin{tikzpicture}[scale=0.6]
\draw[thick, smooth] (6,{0.5+2*sqrt(2)}) arc (180:0:1);
\draw[thick, smooth] (6,{2*sqrt(2)})--(6,{0.5+2*sqrt(2)});
\draw[thick, smooth] (8,{2*sqrt(2)})--(8,{0.5+2*sqrt(2)});
\node[label={above: \fs $y$}] at (7,{1.5+2*sqrt(2)}) {$\bullet$};
\node at (6,{2*sqrt(2)+0.5}) {\u};
\node at (8,{2*sqrt(2)+0.5}) {\d};
\end{tikzpicture}
\\ \\
$r_{+}(x)$
\quad\quad\quad&
$r_{-}(x)$
\quad\quad\quad&
$t_{+}(y)$
\quad\quad\quad&
$t_{-}(y)$
\end{tabular}
\caption{The U-turn vertices, with their Boltzmann weights indicated underneath. The Boltzmann weights are functions of the single rapidity variable passing through the U-turn, and of a further parameter $p$ which is arbitrary.}
\label{fig:U}
\end{figure} 
\noindent
Together with the ordinary Boltzmann weights \eref{boltz-sym}, the U-turn weights satisfy the Sklyanin reflection equation \cite{skl}. Since we have both $r$ and $t$-type boundary vertices, two types of reflection equation occur in this work. These are illustrated in Figure \ref{fig:U-YB}. The reflection equations allow us to establish the symmetry of the partition function $Z_{\rm UU}$ in its rapidity variables.


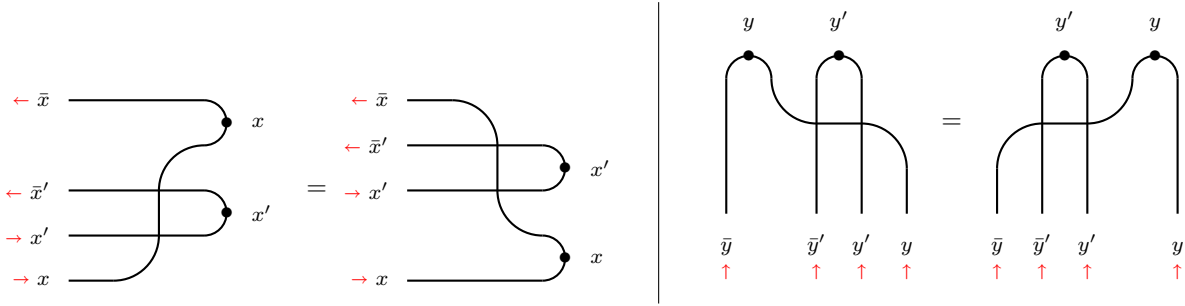
\begin{figure}[H]
\begin{tabular}{cc|cc}
\begin{tikzpicture}[scale=0.30]
\draw[thick, smooth] (0,0)--(2,0);
\draw[thick, smooth] (0,2)--(6,2);
\draw[thick, smooth] (0,4)--(6,4);
\draw[thick, smooth] (0,8)--(6,8);
\draw[thick, smooth] (6,2) arc (-90:90:1); 
\draw[thick, smooth] (6,6) arc (-90:90:1);
\draw[thick, smooth] (2,0) arc (-90:0:2);
\draw[thick, smooth] (4,4) arc (-180:-270:2);
\draw[thick, smooth] (4,4)--(4,2);
\node[label={left: \fs ${\color{red} \shortrightarrow} \ x$}] at (0,0) {};
\node[label={left: \fs ${\color{red} \shortrightarrow} \ x'$}] at (0,2) {};
\node[label={left: \fs ${\color{red} \shortleftarrow} \ \b{x}'$}] at (0,4) {};
\node[label={left: \fs ${\color{red} \shortleftarrow} \ \b{x}$}] at (0,8) {};
\node[label={right: \fs $x'$}] at (7,3) {$\bullet$};
\node[label={right: \fs $x$}] at (7,7) {$\bullet$};

\node at (11, 4) {$=$}; 

\draw[thick, smooth] (15,0)--(21,0);
\draw[thick, smooth] (15,4)--(21,4);
\draw[thick, smooth] (15,6)--(21,6);
\draw[thick, smooth] (15,8)--(17,8);
\draw[thick, smooth] (21,4) arc (-90:90:1); 
\draw[thick, smooth] (21,0) arc (-90:90:1); 
\draw[thick, smooth] (17,8) arc (90:0:2);
\draw[thick, smooth] (21,2) arc (270:180:2);
\draw[thick, smooth] (19,6)--(19,4);
\node[label={left: \fs ${\color{red} \shortrightarrow} \ x$}] at (15,0) {};
\node[label={left: \fs ${\color{red} \shortrightarrow} \ x'$}] at (15,4) {};
\node[label={left: \fs ${\color{red} \shortleftarrow} \ \b{x}'$}] at (15,6) {};
\node[label={left: \fs ${\color{red} \shortleftarrow} \ \b{x}$}] at (15,8) {};
\node[label={right: \fs $x$}] at (22,1) {$\bullet$};
\node[label={right: \fs $x'$}] at (22,5) {$\bullet$};
\end{tikzpicture}
&
&
&
\begin{tikzpicture}[scale=0.30]
\draw[thick, smooth] (0,0)--(0,6);
\draw[thick, smooth] (4,0)--(4,6);
\draw[thick, smooth] (6,0)--(6,6);
\draw[thick, smooth] (8,0)--(8,2);
\draw[thick, smooth] (2,6) arc (0:180:1); 
\draw[thick, smooth] (6,6) arc (0:180:1);
\draw[thick, smooth] (6,4) arc (90:0:2);
\draw[thick, smooth] (4,4) arc (270:180:2);
\draw[thick, smooth] (16,4) arc (-90:0:2);
\draw[thick, smooth] (6,4)--(4,4);
\node[label={below: \fs ${\color{red} \begin{array}{c} {\color{black} \b{y}} \\ \shortuparrow \end{array} }$}] at (0,0) {};
\node[label={below: \fs ${\color{red} \begin{array}{c} {\color{black} \b{y}'} \\ \shortuparrow \end{array} }$}] at (4,0) {};
\node[label={below: \fs ${\color{red} \begin{array}{c} {\color{black} y'} \\ \shortuparrow \end{array} }$}] at (6,0) {};
\node[label={below: \fs ${\color{red} \begin{array}{c} {\color{black} y} \\ \shortuparrow \end{array} }$}] at (8,0) {};
\node[label={above: \fs $y$}] at (1,7) {$\bullet$};
\node[label={above: \fs $y'$}] at (5,7) {$\bullet$};

\node at (10, 4) {$=$}; 

\draw[thick, smooth] (12,0)--(12,2);
\draw[thick, smooth] (14,0)--(14,6);
\draw[thick, smooth] (16,0)--(16,6);
\draw[thick, smooth] (20,0)--(20,6);
\draw[thick, smooth] (16,6) arc (0:180:1); 
\draw[thick, smooth] (20,6) arc (0:180:1);
\draw[thick, smooth] (12,2) arc (-180:-270:2);
\draw[thick, smooth] (16,4)--(14,4);
\node[label={below: \fs ${\color{red} \begin{array}{c} {\color{black} \b{y}} \\ \shortuparrow \end{array} }$}] at (12,0) {};
\node[label={below: \fs ${\color{red} \begin{array}{c} {\color{black} \b{y}'} \\ \shortuparrow \end{array} }$}] at (14,0) {};
\node[label={below: \fs ${\color{red} \begin{array}{c} {\color{black} y'} \\ \shortuparrow \end{array} }$}] at (16,0) {};
\node[label={below: \fs ${\color{red} \begin{array}{c} {\color{black} y} \\ \shortuparrow \end{array} }$}] at (20,0) {};
\node[label={above: \fs $y'$}] at (15,7) {$\bullet$};
\node[label={above: \fs $y$}] at (19,7) {$\bullet$};
\end{tikzpicture}
\end{tabular}
\caption{The Sklyanin reflection equations. In both equations, two U-turn vertices and two bulk vertices are involved. External edges are fixed to definite arrow configurations on both sides of the equation, while internal edges are summed over. We remark that the orientation of the bulk vertices is not the same in both equations.}
\label{fig:U-YB}
\end{figure}

We will make use of two further relations satisfied by the bulk and U-turn vertices. Following \cite{kup2} we refer to these as {\it fish equations}, and they are given in Figure \ref{fig:fish}.


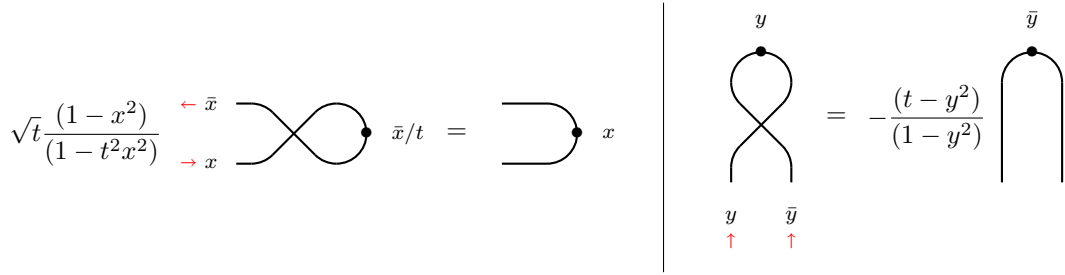
\begin{figure}
\begin{tabular}{cc|cc}
\begin{tikzpicture}[scale=0.4]
\def\dy{4}
\node at (-5.5, 1+\dy) {$\displaystyle{\sqrt{t} \frac{(1-x^2)}{(1-t^2 x^2)}}$};
\draw[thick,smooth] (-0.5,0+\dy)--(0,0+\dy);
\draw[thick,smooth] (-0.5,2+\dy)--(0,2+\dy);
\draw[thick, smooth] (0,2+\dy) arc (90:45:1);
\draw[thick, smooth] (0,0+\dy) arc (-90:-45:1);
\draw[thick, smooth] ({sqrt(2)/2},{1+sqrt(2)/2+\dy})--({3*sqrt(2)/2},{1-sqrt(2)/2+\dy});
\draw[thick, smooth] ({sqrt(2)/2},{1-sqrt(2)/2+\dy})--({3*sqrt(2)/2},{1+sqrt(2)/2+\dy});
\draw[thick, smooth] ({2*sqrt(2)},2+\dy) arc (90:135:1);
\draw[thick, smooth] ({2*sqrt(2)},0+\dy) arc (-90:-135:1);
\draw[thick, smooth] ({2*sqrt(2)},2+\dy) arc (90:-90:1);
\node[label={right: \fs $\b{x}/t$}] at ({2*sqrt(2)+1},1+\dy) {$\bullet$};
\node[label={left: \fs ${\color{red} \shortrightarrow} \ x$}] at (-0.5, 0+\dy) {};
\node[label={left: \fs ${\color{red} \shortleftarrow} \ \b{x}$}] at (-0.5, 2+\dy) {};

\node at ({2*sqrt(2) + 4}, 1+\dy) {$=$};
\node at (7,1) {${}$};

\draw[thick, smooth] ({2*sqrt(2) + 3 + 4},2+\dy) arc (90:-90:1);
\draw[thick, smooth] ({2*sqrt(2) + 3 + 2.5},2+\dy)--({2*sqrt(2) + 3 + 4},2+\dy);
\draw[thick, smooth] ({2*sqrt(2) + 3 + 2.5},0+\dy)--({2*sqrt(2) + 3 + 4},0+\dy);
\node[label={right: \fs $x$}] at ({2*sqrt(2) + 3 + 5},1+\dy) {$\bullet$};

\end{tikzpicture}
&
&
&
\begin{tikzpicture}[scale=0.4]
\draw[thick, smooth] (0,{-0.5 + 2*sqrt(2) + 1}) arc (-180:-135:1);
\draw[thick, smooth] (2,{-0.5 + 2*sqrt(2) + 1}) arc (0:-45:1);
\draw[thick, smooth] ({1 - sqrt(2)/2},{-0.5-sqrt(2)/2 + 2*sqrt(2) + 1})--({1 + sqrt(2)/2},{-0.5-3*sqrt(2)/2 + 2*sqrt(2) + 1});
\draw[thick, smooth] ({1 - sqrt(2)/2},{-0.5-3*sqrt(2)/2 + 2*sqrt(2) + 1})--({1 + sqrt(2)/2},{-0.5-sqrt(2)/2 + 2*sqrt(2) + 1});
\draw[thick, smooth] (0,0.5) arc (180:135:1);
\draw[thick, smooth] (2,0.5) arc (0:45:1);
\draw[thick, smooth] (0,{0.5+2*sqrt(2)}) arc (180:0:1);
\draw[thick, smooth]  (0,0)-- (0,0.5);
\draw[thick, smooth]  (2,0)-- (2,0.5);
\node[label={above: \fs $y$}] at (1,{1.5+2*sqrt(2)}) {$\bullet$};
\node[label={below: \fs ${\color{red} 
\begin{array}{c} {\color{black} y} \\ \shortuparrow \end{array} }$}] at (0,0) {};
\node[label={below: \fs ${\color{red} 
\begin{array}{c} {\color{black} \b{y}} \\ \shortuparrow \end{array} }$}] at (2,0) {};

\node at (3.5,{0.75 + sqrt(2)}) {$=$};

\node at (6.5,{0.75 + sqrt(2)}) {$-\displaystyle{\frac{(t-y^2)}{(1-y^2)}}$};
\draw[thick, smooth] (9,{0.5+2*sqrt(2)}) arc (180:0:1);
\draw[thick, smooth] (9,0)--(9,{0.5+2*sqrt(2)});
\draw[thick, smooth] (11,0)--(11,{0.5+2*sqrt(2)});
\node[label={above: \fs $\b{y}$}] at (10,{1.5+2*sqrt(2)}) {$\bullet$};
\end{tikzpicture}
\end{tabular}
\caption{The fish equations, which involve a single U-turn and bulk vertex. The factors indicate that one side of the equation is to be multiplied by that quantity. We point out that the argument of the U-turn vertex changes from one side of the equation to the other.}
\label{fig:fish}
\end{figure} 

\subsection{Partition function on double U-turn lattice}

Following \cite{kup2}, we now consider the partition function of the six-vertex model on a lattice with two reflecting boundaries, as shown in Figure \ref{fig:UU}. The corresponding alternating sign matrices are the so-called UUASMs \cite{kup2}.


\begin{figure}
\begin{tikzpicture}[scale=0.6]
\foreach\x in {1,...,6}
\draw[thick]
(0,\x) -- (6.5,\x);
\foreach\x in {1,...,3}
\node[label={left: \fs ${\color{red} \shortrightarrow} \ x_{\x}$}] at (0,7-2*\x) {};

\foreach\x in {1,...,3}
\node[label={left: \fs ${\color{red} \shortleftarrow} \ \b{x}_{\x}$}] at (0,8-2*\x) {};


\foreach\y in {1,...,6}
\node at (0.5,\y) {\r};

\foreach\x in {1,...,6}
\draw[thick]
(\x,0) -- (\x,6.5);
\foreach\x in {1,...,3}
\node[label={below: \fs ${\color{red} \begin{array}{c} {\color{black} y_{\x}} \\ \shortuparrow \end{array} }$}] at (8-2*\x,0) {};
\foreach\x in {1,...,3}
\node[label={below: \fs ${\color{red} \begin{array}{c} {\color{black} \b{y}_{\x}} \\ \shortuparrow \end{array} }$}] at (7-2*\x,0) {};


\foreach\x in {1,...,6}
\node at (\x,0.5) {\d};

\foreach\x in {1,...,3}
\draw[thick, smooth] (6.5,2*\x-1) arc (-90:90:0.5);
\foreach\x in {1,...,3}
\node[label={right: \fs $x_{\x}$}] at (7,8-2*\x-1/2) {$\bullet$};

\foreach\x in {1,...,3}
\draw[thick, smooth] (2*\x-1,6.5) arc (180:0:0.5);
\foreach\x in {1,...,3}
\node[label={above: \fs $y_{\x}$}] at (8-2*\x-1/2,7) {$\bullet$};

\end{tikzpicture}
\caption{The partition function $Z_{\rm UU}$ of the six-vertex model with 
doubly reflecting domain wall boundary conditions, in the case $n=3$. The orientations of the horizontal lines alternate between left-to-right and right-to-left, whereas the orientation of every vertical line is bottom-to-top.}
\label{fig:UU}
\end{figure}
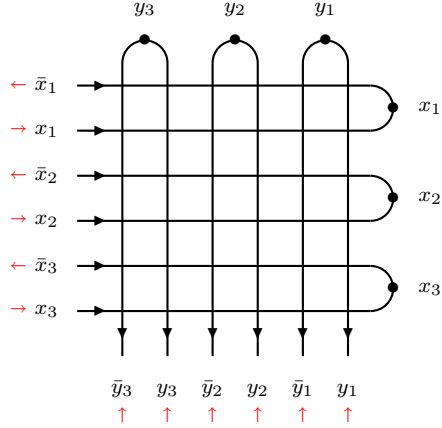

\begin{lem}
\label{lem:UU-prop}
The partition function 
$Z^{(n)}_{\rm UU} = Z_{\rm UU}(x_1,\dots,x_n;y_1,\dots,y_n;t)$ as defined in Figure \ref{fig:UU} satisfies six properties:
\begin{enumerate}[label=\bf\arabic*.]

\item Multiplying by $\prod_{i,j=1}^{n} (1-x_i y_j)^2 (1-x_i \b{y}_j)^2$, it is a polynomial in $x_n$ of degree $4n$.

\item It has zeros at $x_n = \pm 1$.

\item It is symmetric in $\{y_1,\dots,y_n\}$.

\item It is quasi-symmetric under $y_n \longleftrightarrow \b{y}_n$:
\begin{align}
\label{y-quasi-sym}
\b{y}_n
(1-t y_n^2)
Z_{\rm UU}
(x_1,\dots,x_n; y_1,\dots,y_{n-1},y_n;t)
=
y_n
(1-t\b{y}_n^2)
Z_{\rm UU}
(x_1,\dots,x_n; y_1,\dots,y_{n-1},\b{y}_n;t).
\end{align}
 
\item It obeys the recursion relations
\begin{align}
\label{UU-rec1}
\lim_{x_n \rightarrow \b{y}_n}
\Big(
(1-x_n y_n)^2
Z^{(n)}_{\rm UU}
\Big)
&=
\\
(1-t)^2  t^{n-1/2}
&
(\b{p} \b{y}_n-p y_n)
\frac{(1-t\b{y}_n^2)}{(1-\b{y}_n^2)}
\prod_{i=1}^{n-1}
\left[
\frac{(1-t x_i y_n)}{(1-x_i y_n)}
\frac{(1-ty_i \b{y}_n)}{(1-y_i \b{y}_n)}
\frac{(1-t\b{y}_i \b{y}_n)}{(1-\b{y}_i \b{y}_n)}
\right]^2  
Z^{(n-1)}_{\rm UU},
\nonumber
\\
\label{UU-rec2}
Z^{(n)}_{\rm UU}
\Big|_{x_n = y_n/t}
&= 
t^{3n-1/2}
(p y_n - \b{p} \b{y}_n)
\frac{(1-y_n^2/t^2)}{(1-y_n^2/t)}
\prod_{i=1}^{n-1}
\frac{(1-tx_i y_n)^2}{(1-x_i y_n)^2}
Z^{(n-1)}_{\rm UU}.
\end{align}

\item When $n=1$, it is given explicitly by
\begin{align*}
Z^{(1)}_{\rm UU}
=
\frac{
\sqrt{t}(1-t)(1-x_1^2)(y_1-t\b{y}_1) 
\left[
(pt+\b{p}) (x_1 y_1 + x_1 \b{y}_1)
-
(p+\b{p}) (1+tx_1^2)
\right]
}
{(1-x_1 y_1)^2 (1-x_1 \b{y}_1)^2}.
\end{align*}

\end{enumerate}
\end{lem}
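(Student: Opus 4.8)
The plan is to verify all six properties directly from the lattice definition in Figure \ref{fig:UU}, in the spirit of the proofs of Lemmas \ref{lem:HT-prop} and \ref{lem:OO-prop}, but now using the reflection and fish equations of Figures \ref{fig:U-YB} and \ref{fig:fish} to control the two U-turn boundaries. For property {\bf 1}, I would renormalize every bulk vertex by its denominator, so that after multiplying by $\prod_{i,j=1}^n(1-x_iy_j)^2(1-x_i\bar{y}_j)^2$ each weight \eref{boltz-sym} is rational with no poles in $x_n$; the only $x_n$-dependence sits on the two rows carrying $x_n$ and $\bar{x}_n$ and on the U-turn weight $r_\pm(x_n)$, and the half-integer powers of $x_n$ coming from the $c$-weights on these rows recombine with the $\sqrt{x_n}$ in $r_\pm(x_n)$ into integer powers. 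Power-counting these two rows and the U-turn then gives degree $4n$ (as the explicit $n=1$ formula in {\bf 6} confirms). For property {\bf 2}, the source of the zeros is the factor $1-x_n^2$ in the left fish equation of Figure \ref{fig:fish}: folding the pair of rows $(x_n,\bar{x}_n)$ against the right U-turn by that equation extracts a prefactor proportional to $1-x_n^2$, which vanishes at $x_n=\pm1$ (again visible as the factor $1-x_1^2$ in the $n=1$ expression).

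Properties {\bf 3} and {\bf 4} are the two symmetry statements. Symmetry in $\{y_1,\dots,y_n\}$ ({\bf 3}) follows from the standard argument combining the Yang--Baxter equation of Figure \ref{fig:YB} with the Sklyanin reflection equations of Figure \ref{fig:U-YB}: sliding an auxiliary rapidity line through the lattice interchanges any two adjacent vertical lines, proving invariance under permutations of the $y_j$. The quasi-symmetry under $y_n\leftrightarrow\bar{y}_n$ ({\bf 4}) comes instead from the top U-turn: the right fish equation of Figure \ref{fig:fish}, carrying the prefactor $-(t-y^2)/(1-y^2)$ and converting a bulk vertex capped by the top U-turn with argument $y_n$ into the bare top U-turn with argument $\bar{y}_n$, produces exactly the relation \eref{y-quasi-sym} once the $\sqrt{y_n}$ factors from the $c$-weights are tracked.

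The recursion relations {\bf 5} are obtained by freezing. For \eref{UU-rec1} I would take $x_n\to\bar{y}_n$; since then $x_ny_n\to1$, the relevant weight on the row-$x_n$/column-$\bar{y}_n$ vertex develops a pole, so multiplying by $(1-x_ny_n)^2$ and taking the limit forces a frozen region, and the right U-turn $r_\pm(x_n)$ together with the adjacent frozen vertices is resolved using the fish equation. For \eref{UU-rec2} I would set $x_n=y_n/t$, which makes the $a$-weight $\tfrac{1-tx_n/y_n}{1-x_n/y_n}$ vanish and hence forces the competing configuration, again freezing a strip of vertices and collapsing the U-turn via the fish equation. In each case the unfrozen part of the lattice is exactly $Z^{(n-1)}_{\rm UU}$, and the task is to collect the Boltzmann weights of the frozen region, namely the appropriate power of $\sqrt{t}$, the $c_+$ weight $(1-t)$, the $p,\bar{p}$-dependence carried by $r_\pm$ and by the fish prefactor, and the products over $i=1,\dots,n-1$, into the stated coefficients. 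Property {\bf 6} is then a direct finite computation of the $2\times2$-bulk lattice with its two U-turns, summing over the admissible arrow configurations and substituting the weights \eref{boltz-sym} and \eref{U-boltz}.

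The main obstacle I expect is property {\bf 5}: unlike the half-turn and off-diagonal cases, here the frozen region abuts a reflecting boundary, so the freezing argument must be interleaved with the fish equations to correctly close off the U-turn, and the resulting $p,\bar{p}$-factors and half-integer powers of $t$ must be bookkept with care. A secondary subtlety is making property {\bf 2} rigorous, that is, turning the fish-equation folding that yields the $1-x_n^2$ factor into a clean argument rather than merely reading the factor off the $n=1$ formula.
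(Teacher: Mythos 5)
Your plan follows the same route as the paper: verify the six properties directly on the lattice, with properties \textbf{1}, \textbf{3} and \textbf{6} handled essentially as the paper does (for \textbf{1} you implicitly use the paper's parity observation that the two bottom rows contain an odd number of $c_{\pm}$ vertices). However, at the three places where the U-turn boundaries really intervene, key steps are missing, and they are not just bookkeeping. For property \textbf{2}, naming the factor $(1-x_n^2)$ in the left fish equation is not yet a proof: that equation trades the bare U-turn $r(x_n)$ for this prefactor \emph{times a lattice containing a new internal crossing} between the lines carrying $x_n$ and $\b{x}_n$, and the weights of that crossing carry the denominator $(1-x_n^2)$, whose pole can exactly cancel the zero you are trying to exhibit. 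The paper's essential extra step is to drag this crossing horizontally through the lattice with the Yang--Baxter equation until it emerges on the left boundary, where the fixed arrows force it into a $b_{+}$ configuration of constant weight $\sqrt{t}$ (Figure \ref{fig:fish-exchange}); only then is $Z^{(n)}_{\rm UU}$ exhibited as $(1-x_n^2)$ times a quantity manifestly finite at $x_n=\pm 1$.

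Similarly, for property \textbf{4} your fish-equation step presupposes a crossing sitting underneath the top U-turn, but the lattice of Figure \ref{fig:UU} contains no such vertex: the paper first \emph{creates} it by attaching an $a_{-}$ vertex at the base of the $n$-th double column, which multiplies $Z_{\rm UU}$ by $(1-ty_n^2)/(1-y_n^2)$, and transports it upward by Yang--Baxter moves; the right fish equation then removes it at the cost of $-(t-y_n^2)/(1-y_n^2)$ while replacing $y_n$ by $\b{y}_n$. Equation \eref{y-quasi-sym} is exactly the ratio of these two explicit prefactors --- no tracking of $\sqrt{y_n}$ factors from $c$-weights enters. Finally, in property \textbf{5} your allocation of the fish equations is inverted: recursion \eref{UU-rec1} uses no fish equation at all (it is plain freezing on the original lattice, forced by the double pole at $x_n y_n = 1$, with the frozen right U-turn supplying the factor $(\b{p}\b{y}_n - p y_n)$), whereas recursion \eref{UU-rec2} is not obtained by substituting $x_n = y_n/t$ directly into the original lattice, where the vanishing $a$-weight sits at the crossing of row $x_n$ with the \emph{second} column and does not by itself freeze anything. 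Instead the paper first rewrites $Z_{\rm UU}$ in the fish-transformed representation of Figure \ref{fig:fish-exchange} (the same manipulation as in property \textbf{2}, whose global factor $t(1-x_n^2)/(1-t^2x_n^2)$ must be carried along), removes the frozen $b_{+}$ vertex, and only then sets $x_n=y_n/t$, which forces a $c_{-}$ vertex at the bottom-left corner and freezes the two bottom rows. You flagged \textbf{2} and \textbf{5} as obstacles yourself; as written, these parts, together with the missing insertion-and-transport step in \textbf{4}, are genuine gaps rather than details to be filled in.
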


\begin{proof}
The proof revolves around the lattice definition of $Z^{(n)}_{\rm UU}$, as well as using the Yang--Baxter and fish equations to prove various symmetries.

\begin{enumerate}[label=\bf\arabic*.]

\item Multiplying the entire partition function by 
$\prod_{i,j=1}^{n} (1-x_i y_j)^2 (1-x_i \b{y}_j)^2$ is equivalent to renormalizing the individual Boltzmann weights, such that each is a degree-1 polynomial in $x_i$ (except the $c_{\pm}$ weights, which go as $\sqrt{x_i}$). We focus our attention on the bottom two rows of $Z^{(n)}_{\rm UU}$, which is the sole place having $x_n$ dependence. In every legal configuration there must be an odd total number of $c_{\pm}$ vertices in these final two rows. Combining this with the explicit parametrization of the right U-turn vertices ensures that $Z^{(n)}_{\rm UU}$ is indeed a polynomial in $x_n$. Furthermore since there is minimally one 
$c_{\pm}$ vertex in these two rows, the degree of the polynomial is $4n$.

\item Starting from the U-turn vertex associated with the final two rows of 
$Z^{(n)}_{\rm UU}$, we can immediately use the fish equation on the left of Figure \ref{fig:fish} to introduce an extra vertex, internal to the lattice. This also produces an overall multiplicative factor of $\sqrt{t}(1-x_n^2)/(1-t^2 x_n^2)$. Using the Yang--Baxter equation repeatedly, it is possible to slide the new vertex horizontally through the lattice until it ultimately emerges from the left as a $b_{+}$ vertex, with Boltzmann weight $\sqrt{t}$. This process is illustrated in Figure \ref{fig:fish-exchange}. The two zeros at $x_n = \pm 1$ are due to the factor $(1-x_n^2)$ introduced at the start of this procedure.

\begin{figure}
\hspace*{3em}
\begin{tikzpicture}[scale=0.6]


\foreach\x in {1,...,6} \draw[thick, smooth] (0,\x) -- ({6.5},\x);
\foreach\x in {3,...,6} \draw[thick, smooth] (6.5,\x) -- ({6.5+sqrt(2)},\x);

\foreach\x in {1,...,3} \node[label={left: \fs ${\color{red} \shortrightarrow} \ x_{\x}$}] at (0,7-2*\x) {};
\foreach\x in {1,...,3} \node[label={left: \fs ${\color{red} \shortleftarrow} \ \b{x}_{\x}$}] at (0,8-2*\x) {};

\foreach\y in {1,...,6}
\node at (0.5,\y) {\r};

\foreach\x in {1,...,6}
\draw[thick]
(\x,0) -- (\x,6.5);
\foreach\x in {1,...,3}
\node[label={below: \fs ${\color{red} \begin{array}{c} {\color{black} y_{\x}} \\ \shortuparrow \end{array} }$}] at (8-2*\x,0) {};
\foreach\x in {1,...,3}
\node[label={below: \fs ${\color{red} \begin{array}{c} {\color{black} \b{y}_{\x}} \\ \shortuparrow \end{array} }$}] at (7-2*\x,0) {};

\foreach\x in {1,...,6}
\node at (\x,0.5) {\d};

\foreach\x in {2,...,3} \draw[thick, smooth] ({6.5+sqrt(2)},2*\x-1) arc (-90:90:0.5);
\foreach\x in {1,...,2} \node[label={right: \fs $x_{\x}$}] at ({7+sqrt(2)},8-2*\x-1/2) {$\bullet$};

\draw[thick, smooth] (6.5,2) arc (90:45:0.5);
\draw[thick, smooth] (6.5,1) arc (-90:-45:0.5);
\draw[thick, smooth] ({6.5 + sqrt(2)/4},{1.5+sqrt(2)/4})--({6.5 + 3*sqrt(2)/4},{1.5-sqrt(2)/4});
\draw[thick, smooth] ({6.5 + sqrt(2)/4},{1.5-sqrt(2)/4})--({6.5 + 3*sqrt(2)/4},{1.5+sqrt(2)/4});
\draw[thick, smooth] ({6.5 + sqrt(2)},2) arc (90:135:0.5);
\draw[thick, smooth] ({6.5 + sqrt(2)},1) arc (-90:-135:0.5);
\draw[thick, smooth] ({6.5 + sqrt(2)},2) arc (90:-90:0.5);
\node[label={right: \fs $\b{x}_3/t$}] at ({6.5 + sqrt(2)+0.5},1.5) {$\bullet$};

\foreach\x in {1,...,3}
\draw[thick, smooth] (2*\x-1,6.5) arc (180:0:0.5);
\foreach\x in {1,...,3}
\node[label={above: \fs $y_{\x}$}] at (8-2*\x-1/2,7) {$\bullet$};

\node at ({9+sqrt(2)},3.5) {$=$};


\foreach\x in {3,...,6} \draw[thick, smooth] ({11+sqrt(2)},\x) -- ({18.5+2*sqrt(2)},\x);
\foreach\x in {1,...,2} \draw[thick, smooth] ({12+2*sqrt(2)},\x) -- ({18.5+2*sqrt(2)},\x);
\foreach\x in {1,...,2} \draw[thick, smooth] ({11+sqrt(2)},\x) -- ({12+sqrt(2)},\x);

\foreach\x in {1,...,3} \node[label={left: \fs ${\color{red} \shortrightarrow} \ x_{\x}$}] at ({11+sqrt(2)},7-2*\x) {};
\foreach\x in {1,...,3} \node[label={left: \fs ${\color{red} \shortleftarrow} \ \b{x}_{\x}$}] at ({11+sqrt(2)},8-2*\x) {};

\draw[thick, smooth] ({12+sqrt(2)},2) arc (90:45:0.5);
\draw[thick, smooth] ({12+sqrt(2)},1) arc (-90:-45:0.5);
\draw[thick, smooth] ({12 + sqrt(2) + sqrt(2)/4},{1.5+sqrt(2)/4})--({12 + sqrt(2) + 3*sqrt(2)/4},{1.5-sqrt(2)/4});
\draw[thick, smooth] ({12 + sqrt(2) + sqrt(2)/4},{1.5-sqrt(2)/4})--({12 + sqrt(2) + 3*sqrt(2)/4},{1.5+sqrt(2)/4});
\draw[thick, smooth] ({12 + sqrt(2) + sqrt(2)},2) arc (90:135:0.5);
\draw[thick, smooth] ({12 + sqrt(2) + sqrt(2)},1) arc (-90:-135:0.5);

\foreach\y in {1,...,6} \node at ({11.5+sqrt(2)},\y) {\r};

\node at ({12.5+2*sqrt(2)},1) {\r};
\node at ({12.5+2*sqrt(2)},2) {\r};
%
\foreach\x in {1,...,6} \draw[thick, smooth] ({\x + 12 + 2*sqrt(2)},0) -- ({\x + 12 + 2*sqrt(2)},6.5);
\foreach\x in {1,...,3} \node[label={below: \fs ${\color{red} \begin{array}{c} {\color{black} y_{\x}} \\ \shortuparrow \end{array} }$}] at ({8-2*\x + 12 + 2*sqrt(2)},0) {};
\foreach\x in {1,...,3} \node[label={below: \fs ${\color{red} \begin{array}{c} {\color{black} \b{y}_{\x}} \\ \shortuparrow \end{array} }$}] at ({7-2*\x + 12 + 2*sqrt(2)},0) {};
%
\foreach\x in {1,...,6} \node at ({\x + 12 + 2*sqrt(2)},0.5) {\d};
%
\foreach\x in {1,...,3} \draw[thick, smooth] ({18.5+2*sqrt(2)},2*\x-1) arc (-90:90:0.5);
\foreach\x in {1,...,2} \node[label={right: \fs $x_{\x}$}] at ({19 + 2*sqrt(2)},8-2*\x-1/2) {$\bullet$};
\node[label={right: \fs $\b{x}_3/t$}] at ({19 + 2*sqrt(2)},2-1/2) {$\bullet$};

%
\foreach\x in {1,...,3} \draw[thick, smooth] ({2*\x-1 + 12 + 2*sqrt(2)},6.5) arc (180:0:0.5);
\foreach\x in {1,...,3} \node[label={above: \fs $y_{\x}$}] at ({8-2*\x-1/2 + 12 + 2*sqrt(2)},7) {$\bullet$};

\end{tikzpicture}
\caption{Using the fish equation to exchange the lowest two horizontal lines. The internal vertex thus introduced can be moved horizontally through the lattice, until it emerges from the left side, where it is forced to be in a $b_{+}$ configuration. The order of the two participating horizontal lines is switched, the right U-turn vertex now has argument $\b{x}_n/t$, and a total multiplicative factor of $t(1-x_n^2)/(1-t^2 x_n^2)$ is acquired.}
\label{fig:fish-exchange}
\end{figure}
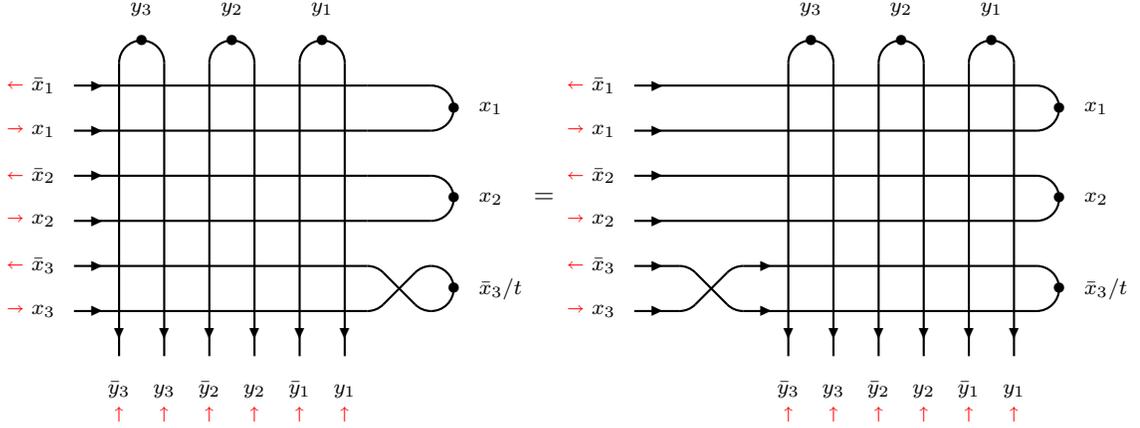

\item Using both the Yang--Baxter and reflection equation, it is possible to interchange the order of any two double columns bearing the rapidities $\{\b{y}_i,y_i\}$ and 
$\{\b{y}_j,y_j\}$. This is a standard argument used in models with a double-row transfer matrix; see \cite{kup2} and references therein for more details.

\item One can attach a single $a_{-}$ vertex at the base of the first two columns in $Z^{(n)}_{\rm UU}$, which is equivalent to multiplying the partition function by $(1-t y_n^2)/(1-y_n^2)$. The inserted vertex can then be moved vertically through the lattice (using the Yang--Baxter equation) until it reaches the U-turn vertex at the top of the double column. Applying the fish equation on the right of Figure \ref{fig:fish}, the internal crossing is removed and the order of the first two columns is interchanged, up to an overall factor of 
$-(t-y_n^2)/(1-y_n^2)$. Hence we find that
\begin{align*}
\frac{(1-t y_n^2)}{(1-y_n^2)}
Z_{\rm UU}
(x_1,\dots,x_n; y_1,\dots,y_{n-1},y_n;t)
=
-
\frac{(t-y_n^2)}{(1-y_n^2)}
Z_{\rm UU}
(x_1,\dots,x_n; y_1,\dots,y_{n-1},\b{y}_n;t).
\end{align*}
Trivial rearrangement of the prefactors gives the result \eref{y-quasi-sym}.

\item The recursion relation \eref{UU-rec1} follows from the original lattice representation of 
$Z^{(n)}_{\rm UU}$, in Figure \ref{fig:UU}. Multiplying the partition function by $(1-x_n y_n)^2$ and taking $x_n \rightarrow \b{y}_n$ forces the bottom left vertex of the lattice to be a $c_{+}$ vertex. This restriction causes a larger subset of the vertices in $Z^{(n)}_{\rm UU}$ to be in a frozen configuration, as shown on the left of Figure \ref{fig:UU-rec}.

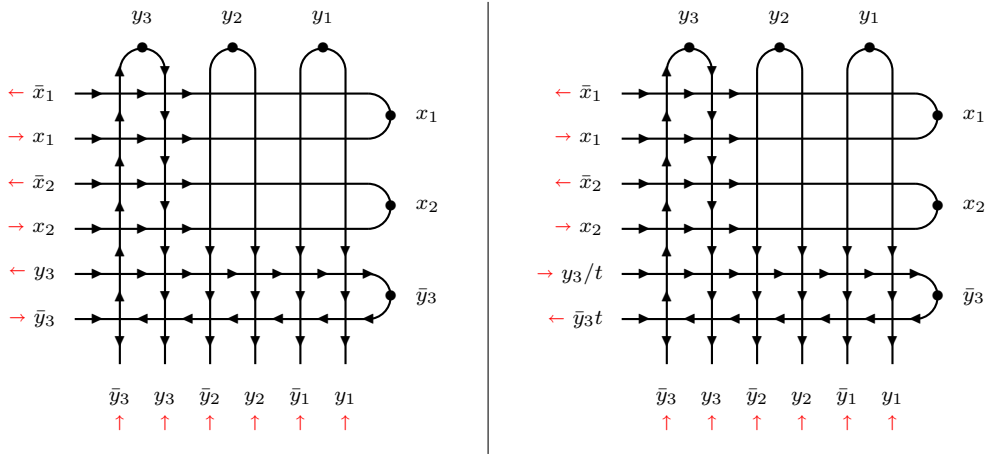
\begin{figure}
\begin{tabular}{cc|cc}
\begin{tikzpicture}[scale=0.6]
\foreach\x in {1,...,6}
\draw[thick]
(0,\x) -- (6.5,\x);
\foreach\x in {1,...,2}
\node[label={left: \fs ${\color{red} \shortrightarrow} \ x_{\x}$}] at (0,7-2*\x) {};
\foreach\x in {1,...,2}
\node[label={left: \fs ${\color{red} \shortleftarrow} \ \b{x}_{\x}$}] at (0,8-2*\x) {};
\node[label={left: \fs ${\color{red} \shortrightarrow} \ \b{y}_3$}] at (0,1) {};
\node[label={left: \fs ${\color{red} \shortleftarrow} \ y_3$}] at (0,2) {};
\foreach\y in {1,...,6}
\node at (0.5,\y) {\r};
\node at (1.5,1) {\l};
\foreach\y in {2,...,6}
\node at (1.5,\y) {\r};
\node at (2.5,1) {\l};
\foreach\y in {2,...,6}
\node at (2.5,\y) {\r};
\node at (3.5,1) {\l};
\node at (3.5,2) {\r};
\node at (4.5,1) {\l};
\node at (4.5,2) {\r};
\node at (5.5,1) {\l};
\node at (5.5,2) {\r};
\node at (6.5,1) {\l};
\node at (6.5,2) {\r};
\foreach\x in {1,...,6}
\draw[thick]
(\x,0) -- (\x,6.5);
\foreach\x in {1,...,3}
\node[label={below: \fs ${\color{red} \begin{array}{c} {\color{black} y_{\x}} \\ \shortuparrow \end{array} }$}] at (8-2*\x,0) {};
\foreach\x in {1,...,3}
\node[label={below: \fs ${\color{red} \begin{array}{c} {\color{black} \b{y}_{\x}} \\ \shortuparrow \end{array} }$}] at (7-2*\x,0) {};
\foreach\x in {1,...,6}
\node at (\x,0.5) {\d};
\node at (1,1.5) {\u};
\foreach\x in {2,...,6}
\node at (\x,1.5) {\d};
\node at (1,2.5) {\u};
\foreach\x in {2,...,6}
\node at (\x,2.5) {\d};
\node at (1,3.5) {\u};
\node at (2,3.5) {\d};
\node at (1,4.5) {\u};
\node at (2,4.5) {\d};
\node at (1,5.5) {\u};
\node at (2,5.5) {\d};
\node at (1,6.5) {\u};
\node at (2,6.5) {\d};
\foreach\x in {1,...,3}
\draw[thick, smooth] (6.5,2*\x-1) arc (-90:90:0.5);
\foreach\x in {1,...,2}
\node[label={right: \fs $x_{\x}$}] at (7,8-2*\x-1/2) {$\bullet$};
\node[label={right: \fs $\b{y}_3$}] at (7,2-1/2) {$\bullet$};
\foreach\x in {1,...,3}
\draw[thick, smooth] (2*\x-1,6.5) arc (180:0:0.5);
\foreach\x in {1,...,3}
\node[label={above: \fs $y_{\x}$}] at (8-2*\x-1/2,7) {$\bullet$};
\end{tikzpicture}
&
&
&
\begin{tikzpicture}[scale=0.6]
\foreach\x in {1,...,6}
\draw[thick]
(0,\x) -- (6.5,\x);
\foreach\x in {1,...,2}
\node[label={left: \fs ${\color{red} \shortrightarrow} \ x_{\x}$}] at (0,7-2*\x) {};
\foreach\x in {1,...,2}
\node[label={left: \fs ${\color{red} \shortleftarrow} \ \b{x}_{\x}$}] at (0,8-2*\x) {};
\node[label={left: \fs ${\color{red} \shortleftarrow} \ \b{y}_3 t$}] at (0,1) {};
\node[label={left: \fs ${\color{red} \shortrightarrow} \ y_3/t$}] at (0,2) {};
\foreach\y in {1,...,6}
\node at (0.5,\y) {\r};
\node at (1.5,1) {\l};
\foreach\y in {2,...,6}
\node at (1.5,\y) {\r};
\node at (2.5,1) {\l};
\foreach\y in {2,...,6}
\node at (2.5,\y) {\r};
\node at (3.5,1) {\l};
\node at (3.5,2) {\r};
\node at (4.5,1) {\l};
\node at (4.5,2) {\r};
\node at (5.5,1) {\l};
\node at (5.5,2) {\r};
\node at (6.5,1) {\l};
\node at (6.5,2) {\r};
\foreach\x in {1,...,6}
\draw[thick]
(\x,0) -- (\x,6.5);
\foreach\x in {1,...,3}
\node[label={below: \fs ${\color{red} \begin{array}{c} {\color{black} y_{\x}} \\ \shortuparrow \end{array} }$}] at (8-2*\x,0) {};
\foreach\x in {1,...,3}
\node[label={below: \fs ${\color{red} \begin{array}{c} {\color{black} \b{y}_{\x}} \\ \shortuparrow \end{array} }$}] at (7-2*\x,0) {};
\foreach\x in {1,...,6}
\node at (\x,0.5) {\d};
\node at (1,1.5) {\u};
\foreach\x in {2,...,6}
\node at (\x,1.5) {\d};
\node at (1,2.5) {\u};
\foreach\x in {2,...,6}
\node at (\x,2.5) {\d};
\node at (1,3.5) {\u};
\node at (2,3.5) {\d};
\node at (1,4.5) {\u};
\node at (2,4.5) {\d};
\node at (1,5.5) {\u};
\node at (2,5.5) {\d};
\node at (1,6.5) {\u};
\node at (2,6.5) {\d};
\foreach\x in {1,...,3}
\draw[thick, smooth] (6.5,2*\x-1) arc (-90:90:0.5);
\foreach\x in {1,...,2}
\node[label={right: \fs $x_{\x}$}] at (7,8-2*\x-1/2) {$\bullet$};
\node[label={right: \fs $\b{y}_3$}] at (7,2-1/2) {$\bullet$};
\foreach\x in {1,...,3}
\draw[thick, smooth] (2*\x-1,6.5) arc (180:0:0.5);
\foreach\x in {1,...,3}
\node[label={above: \fs $y_{\x}$}] at (8-2*\x-1/2,7) {$\bullet$};
\end{tikzpicture} 
\end{tabular}
\caption{Graphical representation of the two recursion relations satisfied by $Z_{\rm UU}$. On the left, the effect of setting $x_n = \b{y}_n$ in the original partition function, giving rise to equation \eref{UU-rec1}. On the right, the effect of setting $x_n = y_n/t$ in the partition function (after using the horizontal fish equation), giving rise to equation \eref{UU-rec2}.}
\label{fig:UU-rec}
\end{figure}

The second recursion relation \eref{UU-rec2} can be deduced from the lattice representation on the right hand side of Figure \ref{fig:fish-exchange}, obtained by a single application of the horizontal fish equation and repeated use of the Yang--Baxter equation. One starts by removing the frozen $b_{+}$ vertex from the left side of the lattice, then setting $x_n=y_n/t$ forces the bottom left vertex to be of type $c_{-}$. A subset of the vertices then freeze, as shown on the right of Figure \ref{fig:UU-rec}.

In both cases, by reading off the Boltzmann weights for the frozen vertices we deduce the prefactors in the recursion relations \eref{UU-rec1} and \eref{UU-rec2}. One must also be mindful of the overall multiplicative factors which are introduced in the derivation of Figure \ref{fig:fish-exchange}, and take these into account when arriving at equation \eref{UU-rec2}. In either case, the surviving (unfrozen) region represents the partition function of one size smaller, $Z^{(n-1)}_{\rm UU}$.

\item The $n=1$ case of the partition function can be computed as a sum of five terms:
\begin{center}
\begin{tikzpicture}[scale=0.6]
\node at (-3,0.5) {$Z^{(1)}_{\rm UU}\ \ \ =$};
\draw[thick, smooth] (0,0)--(5/2,0);
\draw[thick, smooth] (0,1)--(5/2,1);
\node[label={left: \fs ${\color{red} \shortrightarrow} \ x_1$}] at (0,0) {};
\node[label={left: \fs ${\color{red} \shortleftarrow} \ \b{x}_1$}] at (0,1) {};
\node at (0.5,0) {$\r$}; \node at (1.5,0) {\r}; \node at (2.5,0) {\l};
\node at (0.5,1) {$\r$}; \node at (1.5,1) {\l}; \node at (2.5,1) {\r};
\node at (3,0.5) {$\bullet$};
\draw[thick, smooth] (5/2,0) arc (-90:90:0.5);
\draw[thick, smooth] (1,-1)--(1,3/2);
\draw[thick, smooth] (2,-1)--(2,3/2);
\draw[thick, smooth] (2,3/2) arc (0:180:0.5);
\node[label={below: \fs ${\color{red} \begin{array}{c} {\color{black} \b{y}_1} \\ \shortuparrow \end{array} }$}] at (1,-1) {};
\node[label={below: \fs ${\color{red} \begin{array}{c} {\color{black} y_1} \\ \shortuparrow \end{array} }$}] at (2,-1) {};
\node at (1,-0.5) {$\d$}; \node at (1,0.5) {\d}; \node at (1,1.5) {\u};
\node at (2,-0.5) {$\d$}; \node at (2,0.5) {\u}; \node at (2,1.5) {\d};
\node at (1.5,2) {$\bullet$};
\node at (4,0.5) {$+$};
\end{tikzpicture}
\begin{tikzpicture}[scale=0.6]
\draw[thick, smooth] (0,0)--(5/2,0);
\draw[thick, smooth] (0,1)--(5/2,1);
\node[label={left: \fs ${\color{red} \shortrightarrow} \ x_1$}] at (0,0) {};
\node[label={left: \fs ${\color{red} \shortleftarrow} \ \b{x}_1$}] at (0,1) {};
\node at (0.5,0) {$\r$}; \node at (1.5,0) {\r}; \node at (2.5,0) {\r};
\node at (0.5,1) {$\r$}; \node at (1.5,1) {\l}; \node at (2.5,1) {\l};
\node at (3,0.5) {$\bullet$};
\draw[thick, smooth] (5/2,0) arc (-90:90:0.5);
\draw[thick, smooth] (1,-1)--(1,3/2);
\draw[thick, smooth] (2,-1)--(2,3/2);
\draw[thick, smooth] (2,3/2) arc (0:180:0.5);
\node[label={below: \fs ${\color{red} \begin{array}{c} {\color{black} \b{y}_1} \\ \shortuparrow \end{array} }$}] at (1,-1) {};
\node[label={below: \fs ${\color{red} \begin{array}{c} {\color{black} y_1} \\ \shortuparrow \end{array} }$}] at (2,-1) {};
\node at (1,-0.5) {$\d$}; \node at (1,0.5) {\d}; \node at (1,1.5) {\u};
\node at (2,-0.5) {$\d$}; \node at (2,0.5) {\d}; \node at (2,1.5) {\d};
\node at (1.5,2) {$\bullet$};
\node at (4,0.5) {$+$};
\end{tikzpicture}
\begin{tikzpicture}[scale=0.6]
\draw[thick, smooth] (0,0)--(5/2,0);
\draw[thick, smooth] (0,1)--(5/2,1);
\node[label={left: \fs ${\color{red} \shortrightarrow} \ x_1$}] at (0,0) {};
\node[label={left: \fs ${\color{red} \shortleftarrow} \ \b{x}_1$}] at (0,1) {};
\node at (0.5,0) {$\r$}; \node at (1.5,0) {\l}; \node at (2.5,0) {\l};
\node at (0.5,1) {$\r$}; \node at (1.5,1) {\r}; \node at (2.5,1) {\r};
\node at (3,0.5) {$\bullet$};
\draw[thick, smooth] (5/2,0) arc (-90:90:0.5);
\draw[thick, smooth] (1,-1)--(1,3/2);
\draw[thick, smooth] (2,-1)--(2,3/2);
\draw[thick, smooth] (2,3/2) arc (0:180:0.5);
\node[label={below: \fs ${\color{red} \begin{array}{c} {\color{black} \b{y}_1} \\ \shortuparrow \end{array} }$}] at (1,-1) {};
\node[label={below: \fs ${\color{red} \begin{array}{c} {\color{black} y_1} \\ \shortuparrow \end{array} }$}] at (2,-1) {};
\node at (1,-0.5) {$\d$}; \node at (1,0.5) {\u}; \node at (1,1.5) {\u};
\node at (2,-0.5) {$\d$}; \node at (2,0.5) {\d}; \node at (2,1.5) {\d};
\node at (1.5,2) {$\bullet$};
\node at (4,0.5) {$+$};
\end{tikzpicture}
\begin{tikzpicture}[scale=0.6]
\draw[thick, smooth] (0,0)--(5/2,0);
\draw[thick, smooth] (0,1)--(5/2,1);
\node[label={left: \fs ${\color{red} \shortrightarrow} \ x_1$}] at (0,0) {};
\node[label={left: \fs ${\color{red} \shortleftarrow} \ \b{x}_1$}] at (0,1) {};
\node at (0.5,0) {$\r$}; \node at (1.5,0) {\r}; \node at (2.5,0) {\l};
\node at (0.5,1) {$\r$}; \node at (1.5,1) {\r}; \node at (2.5,1) {\r};
\node at (3,0.5) {$\bullet$};
\draw[thick, smooth] (5/2,0) arc (-90:90:0.5);
\draw[thick, smooth] (1,-1)--(1,3/2);
\draw[thick, smooth] (2,-1)--(2,3/2);
\draw[thick, smooth] (2,3/2) arc (0:180:0.5);
\node[label={below: \fs ${\color{red} \begin{array}{c} {\color{black} \b{y}_1} \\ \shortuparrow \end{array} }$}] at (1,-1) {};
\node[label={below: \fs ${\color{red} \begin{array}{c} {\color{black} y_1} \\ \shortuparrow \end{array} }$}] at (2,-1) {};
\node at (1,-0.5) {$\d$}; \node at (1,0.5) {\d}; \node at (1,1.5) {\d};
\node at (2,-0.5) {$\d$}; \node at (2,0.5) {\u}; \node at (2,1.5) {\u};
\node at (1.5,2) {$\bullet$};
\node at (4,0.5) {$+$};
\end{tikzpicture}
\begin{tikzpicture}[scale=0.6]
\draw[thick, smooth] (0,0)--(5/2,0);
\draw[thick, smooth] (0,1)--(5/2,1);
\node[label={left: \fs ${\color{red} \shortrightarrow} \ x_1$}] at (0,0) {};
\node[label={left: \fs ${\color{red} \shortleftarrow} \ \b{x}_1$}] at (0,1) {};
\node at (0.5,0) {$\r$}; \node at (1.5,0) {\r}; \node at (2.5,0) {\r};
\node at (0.5,1) {$\r$}; \node at (1.5,1) {\r}; \node at (2.5,1) {\l};
\node at (3,0.5) {$\bullet$};
\draw[thick, smooth] (5/2,0) arc (-90:90:0.5);
\draw[thick, smooth] (1,-1)--(1,3/2);
\draw[thick, smooth] (2,-1)--(2,3/2);
\draw[thick, smooth] (2,3/2) arc (0:180:0.5);
\node[label={below: \fs ${\color{red} \begin{array}{c} {\color{black} \b{y}_1} \\ \shortuparrow \end{array} }$}] at (1,-1) {};
\node[label={below: \fs ${\color{red} \begin{array}{c} {\color{black} y_1} \\ \shortuparrow \end{array} }$}] at (2,-1) {};
\node at (1,-0.5) {$\d$}; \node at (1,0.5) {\d}; \node at (1,1.5) {\d};
\node at (2,-0.5) {$\d$}; \node at (2,0.5) {\d}; \node at (2,1.5) {\u};
\node at (1.5,2) {$\bullet$};
\end{tikzpicture}
\end{center}

\noindent
Using the expressions \eref{boltz-sym} and \eref{U-boltz} for the Boltzmann weights, we obtain the explicit sum
\begin{align*}
Z^{(1)}_{\rm UU}
&=
-
\frac{
\sqrt{t}(1-t)^3 x_1
(p - x_1)
(1+\b{p}\b{y}_1)
}
{
(1-x_1 y_1)
(1-x_1 \b{y}_1)^2
}
-
\frac{
t^{3/2}(1-t)
(p x_1 - 1)
(1 + \b{p} \b{y}_1)
}
{
(1-x_1 \b{y}_1)
}
\\
&
-
\frac{
\sqrt{t} (1-t) (1-t x_1 y_1) (1-t x_1 \b{y}_1)
(p - x_1)
(y_1 + \b{p})
}
{
(1-x_1 y_1)^2
(1-x_1 \b{y}_1)
}
+
\frac{
\sqrt{t}(1-t)(1-tx_1 \b{y}_1)
(p - x_1)
(t \b{y}_1 + \b{p})
}
{
(1-x_1 \b{y}_1)^2
}
\\
&
+
\frac{
\sqrt{t} (1-t) (1-t x_1 \b{y}_1)
(p x_1 - 1)
(t + \b{p} y_1)
}
{
(1-x_1 y_1)(1-x_1 \b{y}_1)
},
\end{align*}
which can be simplified to
\begin{align*}
Z^{(1)}_{\rm UU}
=
\frac{
\sqrt{t}(1-t)(1-x_1^2)(y_1-t\b{y}_1) 
\left[
(pt+\b{p}) (x_1 y_1 + x_1 \b{y}_1)
-
(p+\b{p}) (1+tx_1^2)
\right]
}
{(1-x_1 y_1)^2 (1-x_1 \b{y}_1)^2}.
\end{align*}

\end{enumerate}
\end{proof}

\begin{thm}[Kuperberg]
The partition function on the double U-turn lattice is given by a product of determinants:
\begin{multline}
\label{ZUU}
Z_{\rm UU}(x_1,\dots,x_n;y_1,\dots,y_n;t)
=
\frac{
(\sqrt{t})^{n^2} \prod_{i=1}^{n} (1-x_i^2) (y_i-t \b{y}_i) 
\prod_{i,j=1}^{n} (1- t x_i y_j)^2 (1- t x_i \b{y}_j)^2
}
{\prod_{1 \leq i<j \leq n} (x_i - x_j)^2 (y_i - y_j)^2 (1 - t x_i x_j)^2 (1-\b{y}_i \b{y}_j)^2}
\\
\times
\det_{1\leq i,j \leq n}
\left[
\frac{(1-t)}{(1-x_i y_j) (1-t x_i y_j) (1-x_i \b{y}_j) (1-t x_i \b{y}_j)}
\right]
\det_{1\leq i,j \leq n}
\left[
\frac{(pt+\b{p})(x_i y_j + x_i \b{y}_j) - (p+\b{p}) (1+t x_i^2)}
{(1-x_i y_j) (1-t x_i y_j) (1-x_i \b{y}_j) (1-t x_i \b{y}_j)}
\right]. 
\end{multline}

\end{thm}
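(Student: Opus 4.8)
The strategy is the same as for the results giving $Z_{\rm HT}$ in \eqref{ZHT} and $Z_{\rm OO}$ in \eqref{ZOO}: I take the right hand side of \eqref{ZUU} as an Ansatz and show that it satisfies all six properties of Lemma \ref{lem:UU-prop}. Since those properties determine $Z_{\rm UU}$ uniquely, this forces the Ansatz to coincide with the partition function. The uniqueness is by Lagrange interpolation in $x_n$: after renormalizing by $\prod_{i,j=1}^{n}(1-x_iy_j)^2(1-x_i\b{y}_j)^2$, property \textbf{1} makes $Z^{(n)}_{\rm UU}$ a polynomial of degree $4n$ in $x_n$, so it is pinned down by $4n+1$ values. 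Properties \textbf{2}--\textbf{5} supply more than enough: the two zeros at $x_n=\pm1$ from property \textbf{2}, together with the values forced by the recursions \eqref{UU-rec1} and \eqref{UU-rec2} at $x_n=\b{y}_j$ and $x_n=y_j/t$, and their images under the $y$-symmetry (property \textbf{3}) and the quasi-symmetry \eqref{y-quasi-sym} (property \textbf{4}), amount to $4n+2$ consistent constraints.

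Several of the properties are immediate from the form of the Ansatz. The prefactor contains $\prod_{i=1}^{n}(1-x_i^2)$, whose $i=n$ factor $(1-x_n^2)=(1-x_n)(1+x_n)$ produces exactly the two zeros required by property \textbf{2}. The degree count for property \textbf{1} follows by tracking how the denominators of the two determinants are cancelled against the polynomial prefactor and the various $(1-tx_iy_j)$, $(1-tx_i\b{y}_j)$ factors, leaving a polynomial of the stated degree in $x_n$. Symmetry in $\{y_1,\dots,y_n\}$ (property \textbf{3}) is the standard consequence of the Cauchy-type antisymmetry of the two determinant factors against the denominators $\prod_{i<j}(y_i-y_j)^2(1-\b{y}_i\b{y}_j)^2$; each determinant changes sign under $y_a\leftrightarrow y_b$ and the prefactor absorbs the sign. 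The quasi-symmetry \eqref{y-quasi-sym} is checked by examining how the entries of each determinant and the prefactor transform under $y_n\mapsto\b{y}_n$, verifying that both determinants are invariant (up to a common scalar) while the prefactor supplies the ratio $y_n(1-t\b{y}_n^2)/\b{y}_n(1-ty_n^2)$. Finally, the base case $n=1$ (property \textbf{6}) is a direct substitution into \eqref{ZUU}, matching the explicit $Z^{(1)}_{\rm UU}$ already computed in Lemma \ref{lem:UU-prop}.

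The substance of the proof lies in the two recursion relations of property \textbf{5}. For \eqref{UU-rec1} I would multiply by $(1-x_ny_n)^2$ and take $x_n\to\b{y}_n$. As a function of $x_n$, all dependence in each determinant sits in the last row $i=n$, and the $(n,n)$ entry carries a simple pole at $x_n=\b{y}_n$ coming from the factor $(1-x_ny_n)$ in its denominator; expanding each determinant along this row, only the $(n,n)$ cofactor survives the limit, and that cofactor is precisely the corresponding size-$(n-1)$ determinant appearing in $Z^{(n-1)}_{\rm UU}$. The $x_n=\b{y}_n$ specialization of the rational prefactor then has to be combined with the two residues and reorganized into the prefactor of $Z^{(n-1)}_{\rm UU}$ times the stated multiplicative factor. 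The relation \eqref{UU-rec2} is handled analogously at $x_n=y_n/t$, where instead the numerator factor $(1-tx_i\b{y}_j)^2$ of the prefactor (at $i=j=n$) develops a double zero while the $(n,n)$ determinant entries develop simple poles, so the product is finite and again collapses to the $(n-1)$-size objects.

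The main obstacle will be the exact bookkeeping in these two collapses: matching the powers of $t$ (the factors $t^{n-1/2}$ and $t^{3n-1/2}$), the $p,\b{p}$-dependent prefactors $(\b{p}\b{y}_n-py_n)$ and $(py_n-\b{p}\b{y}_n)$, and the residual rational factors $\frac{(1-t\b{y}_n^2)}{(1-\b{y}_n^2)}$ and $\frac{(1-y_n^2/t^2)}{(1-y_n^2/t)}$ against the products over $i=1,\dots,n-1$. This requires simultaneously keeping track of how both the first determinant (the U-turn domain wall factor, matching the right hand side of \eqref{HL-UASM}) and the second determinant (the $u=-\sqrt{t}$, $BC$-type factor) reduce, and how the double products in the prefactor split off the $i=n$ (or $j=n$) terms. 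The $p$-dependence, which enters only the second determinant and the base case, is the most delicate to reconcile, since it must reproduce the linear $p,\b{p}$-prefactors in \eqref{UU-rec1} and \eqref{UU-rec2} while leaving the first determinant untouched.
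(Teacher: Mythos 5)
Your proposal follows exactly the paper's proof: take the right hand side of \eqref{ZUU} as an Ansatz, verify that it satisfies the six properties of Lemma \ref{lem:UU-prop}, and conclude by uniqueness, since the recursions \eqref{UU-rec1}--\eqref{UU-rec2} together with the $y$-symmetry and quasi-symmetry fix the degree-$4n$ renormalized polynomial at $4n$ values of $x_n$, which combined with the two zeros at $x_n=\pm 1$ exceeds the $4n+1$ points needed for Lagrange interpolation. Your degree count and point count match the paper's, and your sketch of the determinant collapses (expansion along the last row, with only the $(n,n)$ cofactor surviving the residue limit) is a sound elaboration of the verification the paper leaves to the reader.
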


\begin{proof}
It is a simple matter to verify that \eref{ZUU} satisfies the six properties of Lemma \ref{lem:UU-prop}. The fact that these properties uniquely determine $Z_{\rm UU}$ is again a consequence of Lagrange interpolation. Indeed, the recursion relations \eref{UU-rec1} and \eref{UU-rec2} (together with the symmetry property {\bf 3} and quasi-symmetry property {\bf 4}) determine the polynomial $Z_{\rm UU}$ at $4n$ values of $x_n$. Combined with the two known zeros at $x_n = \pm 1$, these are sufficiently many points to fully determine $Z_{\rm UU}$.  
\end{proof}

\subsection{$u$-deformed $BC_n$ Cauchy identity at Schur and Hall--Littlewood level} 

In this subsection we present a (conjectural) $u$-deformation of equation \eref{HL-UASM}, involving lifted Koornwinder polynomials \cite{rai}. We build up to this via a simpler result at the level of symplectic Schur polynomials, which we are able to prove using the Cauchy--Binet identity. In that sense, the two results presented here are direct analogues of equations \eref{ASM-cor1} and \eref{ASM-cor2} in Section 
\ref{ssec:u-def-ASM}, and \eref{OSASM-cor1} and \eref{OSASM-cor2} in Section \ref{ssec:u-def-OSASM}. In contrast with those other equations, we are currently unable to obtain \eref{UASM-def1} and \eref{UASM-def2} as the 
$q \rightarrow 0$ case of some identity at Macdonald level.

\begin{thm}
\label{symp-schur-thm}

The Cauchy identity for symplectic Schur polynomials can be doubly refined, by the introduction of two deformation parameters $t$ and $u$:
\begin{multline}
\label{UASM-def1}
\sum_{\lambda}
\prod_{i=1}^{n}
(1 - u t^{\lambda_i - i + n})
s_{\lambda}(x_1,\dots,x_n)
sp_{\lambda}(y_1^{\pm 1},\dots,y_n^{\pm 1})
=
\\
\frac{1}{\prod_{1 \leq i<j \leq n} (x_i-x_j) (y_i-y_j) (1-\b{y}_i \b{y}_j)}
\det_{1\leq i,j \leq n}
\left[
\frac{1-u + (u-t) (x_i y_j + x_i \b{y}_j) + (t^2 - u) x_i^2}
{(1-x_i y_j) (1-t x_i y_j)(1-x_i \b{y}_j) (1-t x_i \b{y}_j)}
\right].
\end{multline}

\end{thm}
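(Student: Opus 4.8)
The plan is to imitate the elementary, Cauchy--Binet proof of the Schur-level identity \eref{ASM-cor1}, adapted to the fact that the two sides of the pairing are now of different type: ordinary Schur polynomials in the $x$'s and symplectic Schur polynomials in the $y$'s. First I would substitute the Weyl character formulas
\[
s_\lambda(\vec{x}_n)=\frac{\det_{1\le i,j\le n}[x_j^{\lambda_i+n-i}]}{\Delta(x)},
\qquad
sp_\lambda(y_1^{\pm1},\dots,y_n^{\pm1})=\frac{\det_{1\le i,j\le n}\left[y_j^{\lambda_i+n-i+1}-y_j^{-(\lambda_i+n-i+1)}\right]}{\det_{1\le i,j\le n}\left[y_j^{n-i+1}-y_j^{-(n-i+1)}\right]},
\]
and record that the symplectic Weyl denominator factorises as $\prod_i(y_i-\b{y}_i)\prod_{1\le i<j\le n}(y_i+\b{y}_i-y_j-\b{y}_j)$, while $y_i+\b{y}_i-y_j-\b{y}_j=(y_i-y_j)(1-\b{y}_i\b{y}_j)$. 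Hence multiplying \eref{UASM-def1} through by $\Delta(x)\prod_{i<j}(y_i-y_j)(1-\b{y}_i\b{y}_j)$ converts the entire symplectic denominator into exactly the product appearing on the right of \eref{UASM-def1}, \emph{except} for the single leftover factor $\prod_i(y_i-\b{y}_i)$; keeping track of this factor is the one genuine piece of bookkeeping.

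Writing $k_i=\lambda_i+n-i$, the sum over partitions of length at most $n$ becomes a sum over strictly decreasing sequences $k_1>\cdots>k_n\ge0$, and the cleared left-hand side reads
\[
\sum_{k_1>\cdots>k_n\ge0}\ \prod_{i=1}^n(1-ut^{k_i})\ \det_{1\le i,j\le n}[x_j^{k_i}]\ \det_{1\le i,j\le n}\left[y_j^{k_i+1}-y_j^{-(k_i+1)}\right].
\]
The decisive structural point is that this is a bilinear pairing of two \emph{honest} determinants, so the ordinary Cauchy--Binet identity applies directly; this is precisely what makes the symplectic Cauchy case tractable, in contrast to the even-column Littlewood case of Theorem \ref{u-def-little-s}, which required the Pfaffian analogue of Cauchy--Binet. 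Absorbing the weight $\prod_i(1-ut^{k_i})$ into the rows of the first determinant, Cauchy--Binet collapses the sum to a single $n\times n$ determinant whose $(i,j)$ entry is the geometric sum
\[
\sum_{k\ge0}(1-ut^k)x_i^k\left(y_j^{k+1}-y_j^{-(k+1)}\right)=y_j\,\frac{1-u+(u-t)x_iy_j}{(1-x_iy_j)(1-tx_iy_j)}-\b{y}_j\,\frac{1-u+(u-t)x_i\b{y}_j}{(1-x_i\b{y}_j)(1-tx_i\b{y}_j)}.
\]

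It then remains to match this determinant with the one in \eref{UASM-def1}. Placing each entry over the common denominator $(1-x_iy_j)(1-tx_iy_j)(1-x_i\b{y}_j)(1-tx_i\b{y}_j)$, its numerator is antisymmetric under $y_j\leftrightarrow\b{y}_j$ up to the prefactors $y_j,\b{y}_j$, and I expect the elementary identity
\[
y_j\,P\,Q-\b{y}_j\,P'Q'=(y_j-\b{y}_j)\left(1-u+(u-t)(x_iy_j+x_i\b{y}_j)+(t^2-u)x_i^2\right),
\]
with $P=1-u+(u-t)x_iy_j$, $Q=(1-x_i\b{y}_j)(1-tx_i\b{y}_j)$ and $P',Q'$ obtained by $y_j\leftrightarrow\b{y}_j$, to hold identically (one checks it, e.g., at $u=0$ and $u=t$, and in general by the symmetric/antisymmetric decomposition in $a=x_iy_j$, $b=x_i\b{y}_j$, using that an antisymmetric polynomial in $a,b$ is divisible by $a-b=x_i(y_j-\b{y}_j)$). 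Granting it, the factor $(y_j-\b{y}_j)$ pulls out of the $j$-th column by multilinearity, leaving exactly the determinant on the right of \eref{UASM-def1} times $\prod_j(y_j-\b{y}_j)$, which cancels the leftover $\prod_i(y_i-\b{y}_i)$ and completes the proof. The main obstacle is thus not conceptual: it is the careful matching of the symplectic Weyl-denominator factors against the target product, together with the routine verification of this last scalar identity.
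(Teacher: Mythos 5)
Your proposal is correct and follows essentially the same route as the paper's proof: Weyl-type determinant formulas for $s_\lambda$ and $sp_\lambda$, clearing the denominators (with the leftover $\prod_i(y_i-\b{y}_i)$ tracked), the change of variables $k_i=\lambda_i-i+n$, the ordinary Cauchy--Binet identity, and resummation of the geometric series in each entry. The only difference is cosmetic: you spell out the final scalar identity
\begin{equation*}
y_j\,P\,Q-\b{y}_j\,P'Q'=(y_j-\b{y}_j)\bigl(1-u+(u-t)(x_iy_j+x_i\b{y}_j)+(t^2-u)x_i^2\bigr),
\end{equation*}
which is indeed valid, whereas the paper compresses this into the phrase ``follows from the formal power series expansion of the entries of the determinant.''
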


\begin{proof}
Using the Weyl-type determinant expressions for both $s_{\lambda}$ and $sp_{\lambda}$, and multiplying equation \eref{UASM-def1} by $\prod_{1 \leq i<j \leq n} (x_i-x_j) (y_i-y_j) (1-\b{y}_i \b{y}_j) \prod_{i=1}^{n} (y_i-\b{y}_i)$, the left hand side may be written as
\begin{multline*}
\sum_{\lambda}
\prod_{i=1}^{n}
(1 - u t^{\lambda_i - i + n})
\det_{1\leq i,j \leq n}
\left[ x_i^{\lambda_j-j+n} \right]
\det_{1\leq i,j \leq n}
\left[ y_j^{\lambda_i-i+n+1} - \b{y}_j^{\lambda_i-i+n+1} \right]
\\
=
\sum_{k_1 > \cdots > k_n \geq 0}\ 
\prod_{i=1}^{n}
(1 - u t^{k_i})
\det_{1\leq i,j \leq n}
\left[ x_i^{k_j} \right]
\det_{1\leq i,j \leq n}
\left[ y_j^{k_i+1} - \b{y}_j^{k_i+1} \right],
\end{multline*}
where we have made the change of summation indices $\lambda_i - i + n = k_i$. Applying the Cauchy--Binet identity, we obtain
\begin{multline*}
\sum_{\lambda}
\prod_{i=1}^{n}
(1 - u t^{\lambda_i - i + n})
\det_{1\leq i,j \leq n}
\left[ x_i^{\lambda_j-j+n} \right]
\det_{1\leq i,j \leq n}
\left[ y_j^{\lambda_i-i+n+1} - \b{y}_j^{\lambda_i-i+n+1} \right]
\\
=
\det_{1\leq i,j \leq n}
\left[
\sum_{k=0}^{\infty}
(1-ut^k) x_i^k (y_j^{k+1} - \b{y}_j^{k+1})
\right]
=
\prod_{i=1}^{n} (y_i - \b{y}_i)
\det_{1\leq i,j \leq n}
\left[
\frac{1-u + (u-t) (x_i y_j + x_i \b{y}_j) + (t^2 - u) x_i^2}
{(1-x_i y_j) (1-t x_i y_j)(1-x_i \b{y}_j) (1-t x_i \b{y}_j)}
\right]
,
\end{multline*}
where the final equality follows from the formal power series expansion of the entries of the determinant. Keeping track of the multiplicative factors that we introduced at the outset, we recover the result 
\eref{UASM-def1}.
  
\end{proof}

\begin{conj}
\label{u-def-cau-bc-hl}
The Cauchy identity for lifted Koornwinder polynomials at $q=0$, $T=0$ can be refined by the introduction of a single deformation parameter $u$:
\begin{multline}
\label{UASM-def2}
\sum_{\lambda}
\prod_{i=1}^{m_0(\lambda)}
(1 - u t^{i-1})
b_{\lambda}(t)
P_{\lambda}(x_1,\dots,x_n;t)
\tilde{K}_{\lambda}(y_1^{\pm 1},\dots,y_n^{\pm 1}; 0, t, u t^{n-1}; t_0, t_1, t_2, t_3)
=
\\
\prod_{i=1}^{n}
\frac{(1- t_0 x_i )(1-t_1 x_i)(1-t_2 x_i)(1-t_3 x_i)}{(1-t x_i^2)}
\frac{\prod_{i,j=1}^{n} (1- t x_i y_j) (1- t x_i \b{y}_j)}
{\prod_{1 \leq i<j \leq n} (x_i-x_j) (y_i-y_j) (1 - t x_i x_j) (1-\b{y}_i \b{y}_j)}
\\
\times
\det_{1\leq i,j \leq n}
\left[
\frac{1-u + (u-t) (x_i y_j + x_i \b{y}_j) + (t^2 - u) x_i^2}
{(1-x_i y_j) (1-t x_i y_j)(1-x_i \b{y}_j) (1-t x_i \b{y}_j)}
\right],
\end{multline}
where $\tilde{K}_{\lambda}(y_1^{\pm 1},\dots,y_n^{\pm 1}; 0, t, u t^{n-1}; t_0, t_1, t_2, t_3)$ is a lifted Koornwinder polynomial with $q=0$, $T= u t^{n-1}$ (see Section 7 of \cite{rai} and Appendix A for more details).

\end{conj}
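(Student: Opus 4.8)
The plan is to prove \eqref{UASM-def2} by the same Izergin--Korepin strategy used for Theorems \ref{cauch-thm}, \ref{littlewood-thm} and \ref{u-def-little-hl}: to show that the left hand side, suitably renormalized, satisfies the list of defining properties of the right hand side, which are precisely (the single-determinant factor of) the properties of $Z_{\rm UU}$ recorded in Lemma \ref{lem:UU-prop}. Write $\mathcal{H}_n(x_1,\dots,x_n)$ for the left hand side of \eqref{UASM-def2}, suppressing the $y$-variables and the parameters $t,u,t_0,t_1,t_2,t_3$. Several properties are immediate. Symmetry in $\{x_1,\dots,x_n\}$ follows because $\mathcal{H}_n$ is a sum of Hall--Littlewood polynomials $P_\lambda(x;t)$; symmetry in $\{y_1,\dots,y_n\}$ together with the quasi-symmetry \eqref{y-quasi-sym} under $y_n\leftrightarrow\b{y}_n$ follow from the $BC_n$-symmetry and normalization of the lifted Koornwinder polynomials $\tilde K_\lambda$ recalled in the Appendix (see \cite{rai}). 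The base case $n=1$ is a short geometric-series computation that reproduces $\mathcal{H}_1$, which one checks against $Z^{(1)}_{\rm UU}$ specialized to $u=-\sqrt t$.

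For the polynomiality property I would renormalize $\mathcal{H}_n$ by $\prod_{i,j=1}^n (1-x_i y_j)(1-x_i \b{y}_j)$ (together with the remaining factors in \eqref{UASM-def2}) and isolate the $x_n$-dependence exactly as in Sections \ref{1-poly} and \ref{2-poly}: apply the Hall--Littlewood branching rule \eqref{branch} to $P_\lambda(x_1,\dots,x_n;t)$, expand the renormalizing factor $\prod_j (1-x_n y_j)(1-x_n\b{y}_j)$ as a polynomial in $x_n$ whose coefficients are $BC$-type elementary symmetric functions of $y^{\pm1}$, and reorganize the $y$-side using a $BC_n$ Pieri rule for $\tilde K_\lambda$. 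The $u=0$ specialization of \eqref{UASM-def2} is the Cauchy identity for lifted Koornwinder polynomials at $q=0$, $T=0$, whose right hand side is manifestly polynomial in $x_n$; inheriting this, together with the degree bookkeeping, should yield the required degree-$4n$ polynomiality and the two zeros at $x_n=\pm1$ appearing in Lemma \ref{lem:UU-prop}.

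The two recursion relations \eqref{UU-rec1} and \eqref{UU-rec2} would be obtained by specializing $x_n=\b{y}_n$ and $x_n=y_n/t$. Here one expands $P_\lambda(x_1,\dots,x_n;t)$ by \eqref{branch} in the $x$-variables and simultaneously branches $\tilde K_\lambda(y_1^{\pm1},\dots,y_n^{\pm1};0,t,ut^{n-1};t_0,t_1,t_2,t_3)$ down to $n-1$ variables, isolates the coefficient of each $P_\mu(x_1,\dots,x_{n-1};t)\,\tilde K_\nu(y_1^{\pm1},\dots,y_{n-1}^{\pm1};\dots)$, and shows by the telescoping and stabilizing recurrences of Sections \ref{1-rec} and \ref{2-rec} that this coefficient collapses to the prefactors in \eqref{UU-rec1}--\eqref{UU-rec2} on the diagonal and vanishes off it. A feature particular to this case is that the parameter $T=ut^{n-1}$ of $\tilde K_\lambda$ depends on $n$, so the relevant branching rule must send $T\mapsto ut^{n-2}$ as $n\mapsto n-1$; this is exactly the shift demanded by the recursion, and is a strong consistency check on the form of the branching coefficients.

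The main obstacle is precisely the branching (and companion Pieri) rule for the lifted Koornwinder polynomials $\tilde K_\lambda$ at $q=0$. Unlike the type-$A$ Hall--Littlewood polynomials, for which \eqref{branch} and \eqref{pieri} provide explicit, tableau-theoretic coefficients, no such combinatorial formula is presently available for $\tilde K_\lambda$; without it, neither the Pieri reorganization in the polynomiality step nor the rank reduction in the recursion step can be carried out. This is the same gap that forces the companion symplectic-Schur identity, Theorem \ref{symp-schur-thm}, to be proved instead by a Weyl-determinant plus Cauchy--Binet argument that has no analogue once the determinantal structure is lost. I would therefore regard the derivation of an explicit $q=0$ branching rule for lifted Koornwinder polynomials, compatible with the $T$-shift above, as the decisive step; the uniqueness-via-interpolation scaffolding and the determinant evaluation (an interpolation argument paralleling Theorem \ref{mac-cauch-thm} and the Pfaffian evaluation \eqref{pfaff-id}) would then be routine. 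Until such a rule is found, the strong numerical evidence quoted in the introduction must stand in for a proof.
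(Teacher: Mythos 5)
The statement you are addressing is Conjecture \ref{u-def-cau-bc-hl}: the paper offers no proof of it. The authors verify it for small partitions in Mathematica and Sage and state explicitly that they cannot prove it due to the lack of a suitable branching rule for the lifted Koornwinder polynomials which participate. Your proposal is therefore not being measured against a proof in the paper, and, as you yourself concede in your final paragraph, it is not a proof either: the entire Izergin--Korepin scaffolding you set up (symmetry, polynomiality, recursion, base case, uniqueness by Lagrange interpolation) hinges on expanding $\tilde{K}_{\lambda}(y_1^{\pm 1},\dots,y_n^{\pm 1};0,t,ut^{n-1};t_0,t_1,t_2,t_3)$ down to $n-1$ variables with explicit coefficients, and no such branching (or companion Pieri) rule is currently known. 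This is precisely the obstruction the authors name, so your diagnosis of where the difficulty lies coincides exactly with theirs; but a strategy outline whose decisive step is declared missing is a gap, not a proof, and the statement remains a conjecture.

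Two smaller corrections, should you try to push this through. First, the properties to be verified are not literally those of Lemma \ref{lem:UU-prop}: that lemma characterizes $Z_{\rm UU}$, which is a product of \emph{two} determinants and is related to \eqref{UASM-def2} only after the specialization $u=-t$, $\b{p}=-\sqrt{t}$, $t_0=1$, $t_1=-1$, $t_2=\sqrt{t}$, $t_3=-\sqrt{t}$; note in particular that the relevant value is $u=-t$, not $u=-\sqrt{t}$ as you wrote (the latter is the correct value only in the half-turn and off-diagonal cases of Sections \ref{sec:HTASM} and \ref{sec:OOASM}). What one must characterize instead is the right hand side of \eqref{UASM-def2} itself, with $u$ and $t_0,\dots,t_3$ generic; the correct model is the self-contained list of properties {\bf 1}--{\bf 5} in the proof of Theorem \ref{u-def-little-hl}, not the lattice lemma. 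Second, your observation that any branching rule must implement the shift $T=ut^{n-1}\mapsto ut^{n-2}$ as $n\mapsto n-1$ is a genuine consistency constraint on a candidate rule; one would further expect it to degenerate at $u=t$, where $\tilde{K}_{\lambda}$ becomes a $BC_n$ Hall--Littlewood polynomial by \eqref{t^n-koorn}, to a branching rule for the polynomials $K_{\lambda}$ of \cite{ven} -- and the absence of exactly that tool is also the stated reason the paper could not prove \eqref{HL-UASM} by the methods of Section \ref{sec:proofs}.
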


We discuss briefly some important specializations of equations \eref{UASM-def1} and \eref{UASM-def2}. The $u=0$ specialization of \eref{UASM-def1} gives rise to the equation
\begin{align*}
\sum_{\lambda}
s_{\lambda}(x_1,\dots,x_n)
sp_{\lambda}(y_1^{\pm 1},\dots,y_n^{\pm 1})
=
\frac{
\det_{1\leq i,j \leq n}
\left[
\frac{1}{(1-x_i y_j)(1-x_i \b{y}_j)}
\right]
}
{\prod_{1 \leq i<j \leq n} (x_i-x_j) (y_i-y_j) (1-\b{y}_i \b{y}_j)}
=
\frac{\prod_{1\leq i<j \leq n} (1-x_i x_j)}
{\prod_{i,j=1}^{n} (1-x_i y_j) (1-x_i \b{y}_j)},
\end{align*}
which is the Cauchy identity for symplectic Schur polynomials \cite{sun}. Setting $u=t$, \eref{UASM-def1} reduces to Theorem 3 of \cite{bw}, which is a simple $t$-refinement of the Cauchy identity. 

In a similar way, \eref{UASM-def2} interpolates between two identities which appeared previously in \cite{bw}. When $u=0$, the lifted Koornwinder polynomial appearing in \eref{UASM-def2} has its $T$ argument equal to zero. As is explained in \cite{rai} and Appendix A, the lifted Koornwinder polynomials at $T=0$ have many nice properties, including the Cauchy-type identity \eref{lift-koorn-cauchy}. Setting $u=0$ in 
\eref{UASM-def2}, we obtain
\begin{multline*}
\sum_{\lambda}
b_{\lambda}(t)
P_{\lambda}(x_1,\dots,x_n;t)
\tilde{K}_{\lambda}(y_1^{\pm 1},\dots,y_n^{\pm 1}; 0, t, 0; t_0, t_1, t_2, t_3)
=
\\
\prod_{i=1}^{n}
\frac{(1- t_0 x_i )(1-t_1 x_i)(1-t_2 x_i)(1-t_3 x_i)}{(1-t x_i^2)}
\prod_{1\leq i<j \leq n}
\frac{(1-x_i x_j)}{(1-t x_i x_j)}
\prod_{i,j=1}^{n}
\frac{(1-tx_i y_j)(1-tx_i \b{y}_j)}{(1-x_i y_j) (1-x_i \b{y}_j)}
\end{multline*}
as expected, this being the $q=0$ specialization of \eref{lift-koorn-cauchy}. On the other hand, when $u=t$ the lifted Koornwinder polynomial in \eref{UASM-def2} has its $T$ argument equal to $t^n$. In this case, it is equal to a Koornwinder polynomial with $q=0$ (see equation \eref{t^n-koorn}). Since the Koornwinder polynomials at $q=0$ are type $BC_n$ Hall--Littlewood polynomials \cite{ven}, we expect to recover equation 
\eref{HL-UASM} at this value of $u$. We find that this is indeed the case, after we additionally set $t_i=0$ for $0 \leq i \leq 3$, since all of these parameters were ignored in the original statement of \eref{HL-UASM} in \cite{bw}.

In analogy with previous sections, we wish to point out a further specialization of $u$ which leads to a connection with the partition function \eref{ZUU}. By choosing the boundary parameter in \eref{ZUU} to be $\b{p}=-\sqrt{t}$, and setting $u=-t$ in \eref{UASM-def2}, we obtain agreement between the determinants appearing in \eref{ZUU} and \eref{UASM-def2} up to an overall factor of $(\sqrt{t})^n$. Furthermore by specializing $t_0 = 1$, $t_1 = -1$ and $t_2 = \sqrt{t}$, $t_3 = -\sqrt{t}$, we find that all of prefactors present in \eref{UASM-def2} are also present in \eref{ZUU}. The leftover factors in \eref{ZUU} are those of the UASM partition function, given by the right hand side of equation \eref{HL-UASM}. Hence this is yet another example of a Cauchy-type identity that is closely related to a partition function appearing in \cite{kup2}. 

\section*{Acknowledgments}

We express our sincere thanks to Ole Warnaar, for many valuable insights and suggestions which motivated this work, and in particular for suggesting the idea of 
$u$-deformations of the original identities \eref{HL-ASM}--\eref{HL-UASM}; and to Eric Rains, for helping us in arriving at Conjecture 1. MW would like to thank Fr\'ed\'eric Jouhet for his interest in this work and for pointing out the reference \cite{agj}; and Jean-Christophe Aval, Philippe Nadeau and Eric Ragoucy for invitations to present related work at LaBRI, ICJ and LAPTh, respectively. We finally wish to acknowledge the open-source package Sage whose built-in functions for Hall--Littlewood and Macdonald polynomials were indispensible in verifying some of the conjectures. This work was done under the support of the ERC grant 278124, ``Loop models, integrability and combinatorics''.  

\appendix

\section{Lifted Koornwinder polynomials}

\subsection{$BC_n$-symmetric interpolation and Koornwinder polynomials}

Here we define interpolation and Koornwinder polynomials via Okounkov's binomial formula \cite{oko2,rai}. We mostly follow the exposition in \cite{rai}, while at the same time providing an approach that lends itself to computing these polynomials on the computer. We will need the following parameters (generically, complex numbers): $q, t, s, t_0, t_1, t_2, t_3$. Henceforth we let $\lambda$ be a partition of length 
$\ell(\lambda) \leq n$, $x$ be an indeterminate, and $\vec{x}_n$ be an $n$-tuple of indeterminates $(x_1,\dots,x_n)$. We let $n(\lambda):=\sum_{i} (i-1)\lambda_i$ and make use of the following notations:
\begin{align*}
&
C^+_\lambda(x;q,t) := \prod_{(i,j)\in \lambda} (1-q^{\lambda_i+j-1} t^{2-\lambda'_j-i} x),
\quad\quad 
C^-_\lambda(x;q,t) := \prod_{(i,j)\in \lambda} (1-q^{\lambda_i-j} t^{\lambda'_j-i} x), 
\\
&
(x;q,t)_{\lambda} := \prod_{(i,j)\in \lambda} (1-q^{j-1} t^{1-i} x), 
\quad\quad
(x_1,\dots,x_k;q,t)_{\lambda} := \prod_{i=1}^{k} (x_i;q,t)_{\lambda},
\\
&
k^0_\lambda(q,t,T;t_0{:}t_1,t_2,t_3) 
:= 
(t_0 T/t)^{-|\lambda|} t^{n(\lambda)} 
\frac{(T,T t_0 t_1/t,T t_0 t_2/t,T t_0 t_3/t;q,t)_{\lambda}} 
{C^-_\lambda(t;q,t) C^+_\lambda(T^2 t_0 t_1 t_2 t_3/(qt^2);q,t)}.
\end{align*}
Okounkov's {\it $BC_n$-symmetric interpolation polynomials} $P^{*}_{\lambda}$ can be defined uniquely via the following three conditions (the ground field is $\mathbb{C}(q,t,s)$):
\begin{itemize}
\item 
$P^{*}_{\lambda}(\vec{x}_n;q,t,s) 
= 
m_{\lambda}(\vec{x}_n) + \sum_{\mu < \lambda} c_{\lambda, \mu} m_{\mu}(\vec{x}_n)$, where $m_{\mu}$ is the $BC_n$-symmetric monomial symmetric polynomial in $n$ variables and $<$ denotes dominance ordering on partitions. In particular, 
$P^{*}_{\lambda}$ has $BC_n$-symmetry.
\smallskip

\item 
$P^{*}_{\lambda}(\langle \mu \rangle_{q,t,s};q,t,s) = 0$ for $\mu < \lambda$, where the specialization $\langle \mu \rangle_{q,t,s}$ stands for sending $x_i \mapsto t^{n-i} q^{\lambda_i} s$.
\smallskip

\item 
$P^{*}_\lambda(\langle \lambda \rangle_{q,t,s};q,t,s) 
= 
(q t^{n-1} s)^{-|\lambda|} t^{n(\lambda)} 
q^{-2n(\lambda')} C^-_\lambda(q;q,t) C^+_\lambda(t^{2n-2} s^2;q,t)$.
\end{itemize}
Using these one can define the following {\it generalized binomial coefficients:}
\begin{align*}
\binomQ{\lambda}{\mu}_{q,t,s} 
= 
\frac{P^{*}_\mu(\langle \lambda \rangle_{q,t,s};q,t,t^{1-n} s)} 
{P^{*}_\mu(\langle \mu \rangle_{q,t,s};q,t,t^{1-n} s)}.
\end{align*}
These binomial coefficients vanish unless $\mu \subseteq \lambda$ and we have $\binomQ{\lambda}{\lambda}_{q,t,s} = \binomQ{\lambda}{0}_{q,t,s} = 1$. We can now define the {\it Koornwinder polynomials} via Okounkov's binomial formula:
\begin{align}
K_\lambda(\vec{x}_n;q,t;t_0,t_1,t_2,t_3)
=
\sum_{\mu\subseteq\lambda}
\binomQ{\lambda}{\mu}_{q,t,t^{n-1} \hat{t}_0}
\frac{k^0_\lambda(q,t,t^n;t_0{:}t_1,t_2,t_3)}
{k^0_\mu(q,t,t^n;t_0{:}t_1,t_2,t_3)}
P^{*}_\mu(\vec{x}_n;q,t;t_0),
\label{eq:koorn:binomial}
\end{align}
where $\hat{t}_0=\sqrt{t_0t_1t_2t_3/q}$. When $q=0$ the Koornwinder polynomials are the Hall--Littlewood polynomials of type $BC$ \cite{ven}. 

\subsection{Symmetric polynomials from $BC_n$-symmetric interpolation and Koornwinder polynomials}

We begin by discussing a family of inhomogeneous symmetric functions (and in finitely many variables, polynomials) called \textit{lifted interpolation functions}, introduced by Rains in \cite{rai} (this section follows most of Sections 6 and 7 of \cite{rai}). In addition to the parameters already defined in the previous section, we will also need an extra parameter $T$. The lifted interpolation functions $\tilde{P}^{*}_{\lambda}$ are defined uniquely (over the base field $\mathbb{C}(q,t,s,T)$) via the following three conditions:
\begin{itemize}
\item 
$\tilde{P}^*_\lambda(\vec{x}_n;q,t,T;s)
=
m_\lambda(\vec{x}_n)
+
\sum_{\mu: \mu < \lambda \ \text{or}\ \mu \leq \kappa, \kappa \subset \lambda} c_{\lambda, \mu} m_{\mu}(\vec{x}_n)$, where $m_{\mu}$ is the usual (type $A$) monomial symmetric polynomial.
\smallskip

\item $\tilde{P}^*_\lambda(\langle\mu\rangle_{q,t,T;s};q,t,T;s)=0$ for $\mu < \lambda$, where the \textit{plethystic} specialization $\langle\mu\rangle_{q,t,T;s}$ (i.e., $f(\langle\mu\rangle_{q,t,T;s})$ for $f$ a symmetric function) stands for sending the $k$-th power sum $p_k$ (for every $k$) to
\begin{align*}
p_k \mapsto \sum_{i=1}^{\ell(\mu)} 
\left[
(q^{k\mu_i}-1) t^{-ki} (sT)^k 
+ 
(q^{-k\mu_i}-1) t^{ki} (sT)^{-k}
\right] 
+ 
s^k \frac{1-T^k}{1-t^k} 
+ 
s^{-k} \frac{1-T^{-k}}{1-t^{-k}}.
\end{align*}
\smallskip
 
\item $\tilde{P}^*_\lambda(\langle\lambda\rangle_{q,t,T;s};q,t,T;s)=(qsT/t)^{-|\lambda|} t^{n(\lambda)} q^{-2n(\lambda')} C^-_\lambda(q;q,t) C^+_\lambda((sT/t)^2;q,t)$.
\end{itemize}
When $T=t^n$ (with $x$-variables specialized appropriately) the lifted interpolation polynomials reduce to Okounkov's $BC_n$-symmetric interpolation polynomials: 
\begin{align*}
\tilde{P}^*_\lambda(x_1,x_1^{-1},\dots,x_n,x_n^{-1};q,t,t^n;s)
=
\begin{cases}
P^*_\lambda(x_1,\dots, x_n;q,t,s), 
& 
\ell(\lambda) \leq n,
\\
0, 
& 
\ell(\lambda) > n.
\end{cases}
\end{align*}
The \textit{lifted Koornwinder symmetric functions} $\tilde{K}_{\lambda}$ are defined from the lifted interpolation polynomials via a formula analogous to Okounkov's binomial formula \eref{eq:koorn:binomial} for Koornwinder polynomials:
\begin{align*}
\tilde{K}_\lambda(\vec{x}_n;q,t,T;t_0,t_1,t_2,t_3)
=
\sum_{\mu\subseteq\lambda}
\binomQ{\lambda}{\mu}_{q,t,(T/t)\hat{t}_0}
\frac{k^0_\lambda(q,t,T;t_0{:}t_1,t_2,t_3)}
{k^0_\mu(q,t,T;t_0{:}t_1,t_2,t_3)}
\tilde{P}^*_\mu(\vec{x}_n;q,t,T;t_0).
\end{align*}
When $T=t^n$ (and with $x$-variables appropriately specialized), we recover Koornwinder polynomials:
\begin{align}
\label{t^n-koorn} 
\tilde{K}_\lambda(x_1, x_1^{-1},\dots, x_n, x_n^{-1};q,t,t^n;t_0,t_1,t_2,t_3) 
= 
\begin{cases}
K_\lambda(x_1,\dots, x_n;q,t;t_0,t_1,t_2,t_3),
&
\ell(\lambda) \leq n,
\\
0,
&
\ell(\lambda) > n.
\end{cases}
\end{align}
When $T=0$, the lifted Koornwinder polynomials satisfy the following Cauchy-like identity:
\begin{multline}
\label{lift-koorn-cauchy}
\sum_\lambda
b_{\lambda}(q,t)
P_\lambda(\vec{x}_n;q,t)
\tilde{K}_\lambda(\vec{y}_n;q,t,0;t_0,t_1,t_2,t_3)
=
\\
\prod_{i,j=1}^{n} \frac{(t x_i y_j;q)}{(x_i y_j;q)}
\prod_{1\leq i<j \leq n} \frac{(x_i x_j;q)}{(t x_i x_j;q)}
\prod_{i=1}^{n} \frac{(t_0 x_i,t_1 x_i,t_2 x_i,t_3 x_i;q)}{(t x_i^2;q)},
\end{multline}
where $P_{\lambda}(\vec{x}_n;q,t)$ is a Macdonald polynomial.

\bibliographystyle{abbrv}
\bibliography{PPASM}

\providecommand{\noopsort}[1]{}
\begin{thebibliography}{10}

\bibitem{agj}
G.~E. Andrews, I.~P. Goulden, and D.~M. Jackson.
\newblock Generalizations of {C}auchy's summation theorem for {S}chur
  functions.
\newblock {\em Trans. Amer. Math. Soc.}, 310(2):805--820, 1988.

\bibitem{bw}
D.~Betea and M.~Wheeler.
\newblock Refined {C}auchy and {L}ittlewood identities, plane partitions, and
  symmetry classes of alternating sign matrices.
\newblock {\em arXiv:1402.0229}, 2014.

\bibitem{ize}
A.~G. Izergin.
\newblock Partition function of the six-vertex model in a finite volume.
\newblock {\em Dokl. Akad. Nauk SSSR}, 297(2):331--333, 1987.

\bibitem{kn}
A.~N. Kirillov and M.~Noumi.
\newblock {$q$}-difference raising operators for {M}acdonald polynomials and
  the integrality of transition coefficients.
\newblock In {\em Algebraic methods and {$q$}-special functions ({M}ontr\'eal,
  {QC}, 1996)}, volume~22 of {\em CRM Proc. Lecture Notes}, pages 227--243.
  Amer. Math. Soc., Providence, RI, 1999.

\bibitem{kzj}
V.~Korepin and P.~Zinn-Justin.
\newblock Thermodynamic limit of the six-vertex model with domain wall boundary
  conditions.
\newblock {\em J. Phys. A}, 33(40):7053--7066, 2000.

\bibitem{kor}
V.~E. Korepin.
\newblock Calculation of norms of {B}ethe wave functions.
\newblock {\em Comm. Math. Phys.}, 86(3):391--418, 1982.

\bibitem{kup1}
G.~Kuperberg.
\newblock Another proof of the alternating-sign matrix conjecture.
\newblock {\em Internat. Math. Res. Notices}, (3):139--150, 1996.

\bibitem{kup2}
G.~Kuperberg.
\newblock Symmetry classes of alternating-sign matrices under one roof.
\newblock {\em Ann. of Math. (2)}, 156(3):835--866, 2002.

\bibitem{mac}
I.~G. Macdonald.
\newblock {\em Symmetric functions and {H}all polynomials}.
\newblock Oxford Mathematical Monographs. The Clarendon Press Oxford University
  Press, New York, second edition, 1995.
\newblock With contributions by A. Zelevinsky, Oxford Science Publications.

\bibitem{oko2}
A.~Okounkov.
\newblock {${\rm BC}$}-type interpolation {M}acdonald polynomials and binomial
  formula for {K}oornwinder polynomials.
\newblock {\em Transform. Groups}, 3(2):181--207, 1998.

\bibitem{rai}
E.~M. Rains.
\newblock {${\rm BC}_n$}-symmetric polynomials.
\newblock {\em Transform. Groups}, 10(1):63--132, 2005.

\bibitem{skl}
E.~K. Sklyanin.
\newblock Boundary conditions for integrable quantum systems.
\newblock {\em J. Phys. A}, 21(10):2375--2389, 1988.

\bibitem{sun}
S.~Sundaram.
\newblock Tableaux in the representation theory of the classical {L}ie groups.
\newblock In {\em Invariant theory and tableaux ({M}inneapolis, {MN}, 1988)},
  volume~19 of {\em IMA Vol. Math. Appl.}, pages 191--225. Springer, New York,
  1990.

\bibitem{tsu}
O.~Tsuchiya.
\newblock Determinant formula for the six-vertex model with reflecting end.
\newblock {\em J. Math. Phys.}, 39(11):5946--5951, 1998.

\bibitem{ven}
V.~Venkateswaran.
\newblock Symmetric and nonsymmetric {H}all-{L}ittlewood polynomials of type
  {BC}.
\newblock {\em arXiv:1209.2933v2}, 2013.

\bibitem{vul}
M.~Vuleti{\'c}.
\newblock A generalization of {M}ac{M}ahon's formula.
\newblock {\em Trans. Amer. Math. Soc.}, 361(5):2789--2804, 2009.

\bibitem{war}
S.~O. Warnaar.
\newblock Bisymmetric functions, {M}acdonald polynomials and {$sl_3$} basic
  hypergeometric series.
\newblock {\em Compos. Math.}, 144(2):271--303, 2008.

\bibitem{war2}
S.~O. Warnaar.
\newblock Remarks on the paper ``{S}kew {P}ieri rules for {H}all-{L}ittlewood
  functions'' by {K}onvalinka and {L}auve.
\newblock {\em J. Algebraic Combin.}, 38(3):519--526, 2013.

\end{thebibliography}

\end{document}